\definecolor{red}{rgb}{1,0,0}
\newtheorem{thm}[subsection]{Theorem}
\newtheorem{prop}[subsection]{Proposition}
\newtheorem{cor}[subsection]{Corollary}
\newtheorem{lemma}[subsection]{Lemma}
\theoremstyle{definition}  
\newtheorem{ex}[subsection]{Example}
\newtheorem{remark}[subsection]{Remark}
\newcommand{\dfn}{\textbf} 
\newcommand{\mdfn}[1]{\dfn{\mathversion{bold}#1}} 
\newcommand{\tens}              {\otimes}               
\newcommand{\iso}               {\cong}
\newcommand{\cat}{\EuScript}    
\newcommand{\cO}{{\cat O}}
\newcommand{\cU}{{\cat U}}
\DeclareMathOperator{\Mon}{Mon}
\newcommand{\field}[1]  {\mathbb #1} 
\newcommand{\A}         {\field A}
\newcommand{\R}         {\field R}
\newcommand{\Z}         {\field Z}
\newcommand{\C}         {\field C}
\newcommand{\M}         {\field M}
\DeclareMathOperator{\rank}{rank}
\DeclareMathOperator{\im}{im}
\newcommand{\ra}{\rightarrow}                   
\newcommand{\lra}{\longrightarrow}              
\newcommand{\llra}[1]{\stackrel{#1}{\lra}}      
\newcommand{\fib}{\twoheadrightarrow}           
\newcommand{\inc}{\hookrightarrow}              
\newcommand{\blank}{-}                          
\newcommand{\bblank}{\und{\ \ }\,}
\newcommand{\und}{\underline}
\newcommand{\he}{\simeq}
\newcommand{\pt}{pt}
\newcommand{\rea}[1]{|{#1}|}             
\newcommand{\ceck}[1]{\Cech(#1)}         
\newcommand{\oceck}[1]{\Cech^{o}(#1)}    
\newcommand{\oreal}[1]{\rea{\oceck{U}}}  
\newcommand{\creal}[1]{\rea{\ceck{U}}}   
\newcommand{\Cech}{\check{C}}
\DeclareMathOperator{\Gr}{Gr}
\DeclareMathOperator{\OGr}{OGr}
\newcommand{\F}{\mathbb{F}}
\DeclareMathOperator{\partition}{part}
\newcommand{\prt}[3]{\partition_{#1,\leq #2}[#3]}
\numberwithin{equation}{subsection}
\newenvironment{myequation}
  {\addtocounter{subsection}{1}\begin{eqnarray}}
  {\end{eqnarray}$\!\!$}
\newcommand{\nosee}[1]{}
\newcommand{\RP}{\R P}
\newcommand{\Ztm}{(\Z/2)_m}
\newcommand{\Inv}{{\mathcal{I}}nv}
\newcommand{\wc}{wc}
\begin{document}

\title{Bigraded cohomology of $\Z/2$-equivariant Grassmannians}

\author{Daniel Dugger}
\address{Department of Mathematics\\ University of Oregon\\ Eugene, OR
97403} 

\email{ddugger@uoregon.edu}


\maketitle

\tableofcontents

\section{Introduction}
Let $\R$ and $\R_-$ denote the two representations of $\Z/2$ on
the real line: the first has the trivial action, the second has the
sign action.  Let $\cU$ denote the infinite direct sum
\[ \cU=\R\oplus \R_{-} \oplus \R \oplus \R_- \oplus \cdots \]
The subjects of this paper are the infinite Grassmannians
$\Gr_k(\cU)$, regarded as spaces with a $\Z/2$-action.
Our goal is to compute the $RO(\Z/2)$-graded cohomology rings
$H^*(\Gr_k(\cU);\Ztm)$, where $\Ztm$ denotes the constant-coefficient
Mackey functor.   These cohomology rings are a notion of equivariant
cohomology that is finer than the classical Borel theory.

Of course our results may be interpreted as giving a calculation of all
characteristic classes, with values in the theory
$H^*(\blank;\Ztm)$, for rank $k$ equivariant bundles.  Previous work
on related problems has been done by Ferland and Lewis \cite{FL} and
by Kronholm \cite{K1,K2}, but the present paper provides the first
complete computation for any single value of $k$ larger than $1$.

\medskip

The rest of this introduction aims to describe the results of the
computation.  The context throughout the paper is the category of
$\Z/2$-spaces, with equivariant maps.  Unless stated otherwise all
spaces and maps are in this category.  

The theory $H^*(\blank;\Ztm)$ is graded by the representation ring
$RO(\Z/2)$.  That is to say, if $V$ is a virtual representation then
the theory yields groups $H^V(\blank;\Ztm)$.  For the group $\Z/2$
every representation has the form $\R^p\oplus (\R_-)^q$ for some $p$
and $q$, and this implies that we may regard our cohomology theory as
being bigraded.  Different authors use different indexing conventions,
but we will use the ``motivic'' indexing described as follows.  The
representation $V=\R^p\oplus (\R_-)^q$ is denoted $\R^{p+q,q}$, and the
corresponding cohomology groups $H^V(\blank;\Ztm)$ will be denoted
$H^{p+q,q}(\blank;\Ztm)$.  In this indexing system the first index is
called the \dfn{topological degree} and the second is called the
\dfn{weight}.  One appeal of this system is that dropping the second
index will always give statements that seem familiar from non-equivariant
topology.  

Before continuing, for ease of reading we will  just write
$\Z/2$ instead of $\Ztm$ in coefficients of cohomology groups.  In the
presence of the bigrading this will never lead to any confusion.  

Let $\M_2$ be the bigraded ring $H^{*,*}(\pt;\Z/2)$, the cohomology
ring of a point.  This is the ground ring of our theory; for any
$\Z/2$-space $X$, the ring $H^{*,*}(X;\Z/2)$ is an algebra over
$\M_2$.  A complete description of $\M_2$ is given in the next
section, but for now one only needs to know that there are special
elements $\tau\in \M_2^{0,1}$ and $\rho\in \M_2^{1,1}$.  

The cohomology ring of the projective space $\Gr_1(\cU)$ has been
known for a while; the motivic analog was computed by Voevodsky, and
the same proof works in the $\Z/2$-equivariant setting.  A careful
proof is written down in \cite[Theorem 4.2]{K2}.  There is an isomorphism of algebras
$H^{*,*}(\Gr_1(\cU);\Z/2)\iso \M_2[a,b]/(a^2=\rho a+\tau b)$ where 
$a$ has bidegree $(1,1)$ and $b$ has bidegree $(2,1)$.  
In non-equivariant topology one has $\rho=0$ and $\tau=1$, so that the
above relation becomes $a^2=b$ and we simply have a polynomial algebra
in a variable of degree $1$---the familiar answer for the mod $2$
cohomology of real projective space.  

Note that additively, $H^{*,*}(\Gr_1(\cU);\Z/2)$ is a free module over
$\M_2$ on generators of the following bidegrees:
\[ (0,0),(1,1),(2,1),(3,2),(4,2),(5,3),(6,3),(7,4),\ldots
\]
corresponding to the monomials $1,a,b,ab,b^2,ab^2,b^3,ab^3,\ldots$
If one forgets the weights, then one gets the degrees for elements in an additive basis
for the singular cohomology $H^*(\RP^\infty;\Z/2)$.  So in this case
one can obtain the equivariant cohomology groups by taking a basis for
the singular cohomology groups, adding appropriate weights, and
changing every $\Z/2$ into a copy of $\M_2$.  We mention this because
it is a theorem of Kronholm \cite{K1} that the same is true in the case of
$\Gr_k(\cU)$ (and for many other spaces as well, though not all
spaces).   Because we know the singular cohomology groups
$H^*(\Gr_k(\R^\infty);\Z/2)$, computing the equivariant version
becomes only a question of knowing what weights to attach to the
generators.  While it might seem that it should be simple to resolve this,
the question has been very resistant until now; the present paper
provides an answer.

\medskip

To state our main results, begin by considering the map
\[ \eta\colon \Gr_1(\cU)\times \cdots \times \Gr_1(\cU) \lra \Gr_k(\cU) \]
that classifies the $k$-fold direct sum of line bundles.  Using the
K\"unneth Theorem, the induced
map on cohomology gives
\[ \eta^*\colon H^{*,*}(\Gr_k(\cU);\Z/2) \ra
H^{*,*}(\Gr_1(\cU);\Z/2)^{\tens k}. \]
Since permuting the factors in a $k$-fold sum yields an isomorphic
bundle,  the image of $\eta^*$ lies in the ring of invariants under
the action of the symmetric group $\Sigma_k$.  That is to say, we may
regard $\eta^*$ as a map 
\begin{myequation}
\label{eq:main}
 \eta^*\colon H^{*,*}(\Gr_k(\cU);\Z/2) \ra \bigl
[H^{*,*}(\Gr_1(\cU);\Z/2)^{\tens k }\bigr ]^{\Sigma_k}. 
\end{myequation}

The first of our results is the following:

\begin{thm}
\label{th:main1}
The map in (\ref{eq:main}) is an isomorphism of bigraded rings.  
\end{thm}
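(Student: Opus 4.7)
The plan uses two inputs: Kronholm's freeness theorem, quoted above, and the classical non-equivariant splitting principle $H^*(\Gr_k(\R^\infty);\Z/2) \iso \Z/2[x_1,\ldots,x_k]^{\Sigma_k}$. The strategy is to show that both sides of~(\ref{eq:main}) are free $\M_2$-modules whose $\M_2$-ranks match in each bidegree, and then to detect invertibility of $\eta^*$ by reducing to the non-equivariant case.

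First I would verify freeness on both sides and identify the relevant bidegrees. For the domain this is Kronholm's theorem. For the codomain, Kronholm applied to each factor together with an equivariant K\"unneth formula shows that $H^{*,*}(\Gr_1(\cU);\Z/2)^{\tens k}$ is a free $\M_2$-module on the evident monomial basis in the $a_i$ and $b_i$ (modulo the relations $a_i^2 = \rho a_i + \tau b_i$). Since the $\Sigma_k$-action permutes this basis up to $\M_2$-scalar, the $\Sigma_k$-invariants form a free $\M_2$-submodule whose rank in each bidegree is a combinatorial orbit count.

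Next I would exploit the comparison to non-equivariant cohomology. There is a natural ring map $H^{*,*}(X;\Z/2) \to H^*(X;\Z/2)$ given by passing to the underlying non-equivariant cohomology (equivalently, by $\tau \mapsto 1$ and $\rho \mapsto 0$), and it intertwines~(\ref{eq:main}) with the Whitney-sum map $H^*(\Gr_k(\R^\infty);\Z/2) \to H^*(\RP^\infty;\Z/2)^{\tens k}$, which is the classical splitting principle and hence an isomorphism onto the symmetric polynomials. Once both sides of~(\ref{eq:main}) are known to be free $\M_2$-modules with bases in matching bidegrees, the matrix of $\eta^*$ in those bases has entries in $\M_2^{0,0} = \Z/2$ and reduces, under the comparison map, to the matrix of the classical splitting principle; invertibility downstairs forces invertibility upstairs.

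The main obstacle is the invariant-ring analysis in the first step: because the tensor product is not a polynomial algebra over $\M_2$ (the relation $a_i^2 = \rho a_i + \tau b_i$ intervenes), the classical fundamental theorem of symmetric polynomials does not apply directly, and a careful orbit-counting argument is needed to verify that the $\M_2$-rank of $[H^{*,*}(\Gr_1(\cU);\Z/2)^{\tens k}]^{\Sigma_k}$ matches the $\Z/2$-rank of $H^*(\Gr_k(\R^\infty);\Z/2)$ in each topological degree---and moreover to produce bases in matching bidegrees. With this combinatorial identification in hand, the remaining arguments are largely formal.
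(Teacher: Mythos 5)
There is a genuine gap, and it sits exactly where the real difficulty of the theorem lies: you have no mechanism for determining the \emph{weights} of the free $\M_2$-generators of the domain $H^{*,*}(\Gr_k(\cU);\Z/2)$. Kronholm's theorem tells you this module is free, and combined with the classical count of Schubert cells it tells you how many generators occur in each \emph{topological} degree; it does not tell you in which bidegrees they sit, because the passage from the cellular $E_1$-page to $E_\infty$ shifts weights via differentials and extension problems. Your plan asserts that one can ``produce bases in matching bidegrees'' on the two sides of~(\ref{eq:main}), but the only computation you propose is the orbit count on the invariant-ring side --- which is indeed elementary (Proposition~\ref{pr:rank-Inv}) --- and you identify that as ``the main obstacle.'' It is not: the obstacle is pinning down the weights on the topological side, and this is precisely the question the paper describes as having been ``very resistant.'' The paper resolves it by (i) computing the bidegrees of the equivariant Schubert cells and showing the $E_1$ rank chart consists of $k+1$ rays of slope $\tfrac12$ carrying the same entries as the rays in the rank chart of $\Inv_k$, only attached at permuted points of the line $y=x$ (Proposition~\ref{pr:cell-bound}), so that \emph{diagonal} sums agree; (ii) invoking the sharper form of Kronholm's theorem (Theorem~\ref{th:Kronholm}) that both column sums and diagonal sums are preserved from $E_1$ to the abutment; (iii) extracting an inequality on partial column sums from the injectivity of $[Y/\rho Y]^{p,q}\to H^p(\Gr_k(\R^\infty);\Z/2)$ together with the classical splitting principle (Lemma~\ref{le:main}(c)); and (iv) combining (ii) and (iii) in an induction over diagonals (Proposition~\ref{pr:Y->Z}). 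Without something playing the role of (i)--(iv), your first step cannot be completed.

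A secondary problem is the final step. Even granting bases in matching bidegrees, the matrix of $\eta^*$ does not have entries in $\M_2^{0,0}$: within a fixed topological degree the generators have several different weights, so off-diagonal entries lie in $\M_2^{0,q-q'}$ and the like, i.e.\ they can be powers of $\tau$, multiples of $\rho$, or elements of the negative cone. Moreover ``invertibility downstairs forces invertibility upstairs'' is not automatic here: $\M_2$ is not local, $\rho$ is not nilpotent, and the modules are infinitely generated, so a naive Nakayama argument fails. The paper's Lemma~\ref{le:algebra} is the correct statement --- it needs the hypothesis that the rank charts are bounded below in topological degree and supported on finitely many diagonals below any given one, so that high powers of $\rho$ push elements into higher diagonals and an induction on diagonals closes the surjectivity argument. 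Your sketch of this step is morally in the right direction but would need exactly these hypotheses made explicit to go through.
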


This is the direct analog of what happens in the nonequivariant case.
Let us note, however, that until now neither injectivity nor
surjectivity has been known in the present context.  It must be
admitted up front that in some ways our proof of
Theorem~\ref{th:main1} is not very satisfying:
it does not give any reason, based on first principles, why $\eta^*$
should be an isomorphism.  Rather, the proof proceeds by computing the
codomain of $\eta^*$ explicitly and then running a complicated
spectral sequence for computing the domain of $\eta^*$.  By comparing
what is happening on the two sides, and appealing to the
nonequivariant result at key moments, one can see that there is no
choice but for the map to be an isomorphism---even without resolving
all the differentials in the spectral sequence (of which there are
infinitely many).  The argument is somewhat sneaky, but not terribly
difficult in the end.  However, it depends on a key result proven by
Kronholm \cite{K1} that describes the kind of phenomena that take place inside
the spectral sequence.

The proof of Theorem~\ref{th:main1} is the main component of this
paper.  It is completed in Section~\ref{se:proof}.  Subsequent
sections explore some auxilliary issues, that we describe next.  

\begin{remark}
In non-equivariant topology there are several familiar techniques for
proving Theorem~\ref{th:main1}, perhaps the most familiar being use of
the Serre spectral sequence.  Since the theorem is really about the
identification of characteristic classes, another method that  comes to
mind is the Grothendieck approach to characteristic classes via the
cohomology of projective bundles.  The equivariant analogs of both
these approaches have been partially explored by Kronholm
\cite{K2}, but one runs into a fundamental problem: such calculations
require the use of local coefficient systems, because the fixed sets
of Grassmannians are disconnected.  So they involve a level of
diffculty that is far beyond what happens in the non-equivariant case,
and to date no one has gotten these approaches to work.
Cohomology with local coefficients has been little-explored in the
equivariant setting, but see \cite{Sh} for work in this direction.
\end{remark}

\vspace{0.1in}

To access the full power of Theorem~\ref{th:main1} one should compute
the ring of invariants $[H^{*,*}(\Gr_1(\cU);\Z/2)^{\tens k }\bigr
]^{\Sigma_k}$, which is a purely algebraic problem.  The proof of
Theorem~\ref{th:main1} only requires understanding an additive basis
for this ring.  The second part of the paper examines the multiplicative structure.

In regards to the additive basis, we can state one form of our results
as follows.  Recall that a basis for $H^n(\Gr_k(\R^\infty);\Z/2)$ is
provided by the Schubert cells of dimension $n$, and these are in
bijective correspondence with partitions of $n$ into $\leq k$ pieces.
For example, a basis for $H^6(\Gr_3(\R^\infty);\Z/2)$ is in bijective
correspondence with the set of partitions
\[ [6],\quad [51],\quad [42],\quad [411], \quad [33], \quad [321],\quad [222].
\]
For any such partition $\sigma=[j_1j_2\ldots j_k]$, define its {\it weight\/}
to be 
\[ w(\sigma)=\sum \lceil \tfrac{j_i}{2}\rceil.
\]
So the list of the above seven partitions
have corresponding weights $3,4,3,4,4,4,3$.   
Using this notion,
the following result shows how to write down an $\M_2$-basis for
$H^{*,*}(\Gr_k(\cU);\Z/2)$:

\begin{thm}
$H^{*,*}(\Gr_k(\cU);\Z/2)$ is a free module over $\M_2$ with a basis
$S$, where the elements of $S$ in topological degree
$n$ are in bijective correspondence with partitions of $n$ into at
most $k$ pieces.  This bijection sends a partition $\sigma$ to a basis
element of bidegree $(n,w(\sigma))$ where $w(\sigma)$ is the weight of $\sigma$.
\end{thm}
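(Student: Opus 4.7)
The plan is to deduce this from Theorem~\ref{th:main1} by computing the $\Sigma_k$-invariant basis of $H^{*,*}(\Gr_1(\cU);\Z/2)^{\tens k}$ combinatorially. I would first recall from the introduction that $H^{*,*}(\Gr_1(\cU);\Z/2)$ is a free $\M_2$-module on the monomials $a^\epsilon b^n$ for $\epsilon\in\{0,1\}$ and $n\geq 0$, with $a^\epsilon b^n$ in bidegree $(\epsilon+2n,\,\epsilon+n)$. A K\"unneth argument then exhibits $H^{*,*}(\Gr_1(\cU);\Z/2)^{\tens k}$ as a free $\M_2$-module on the pure tensor monomials $a^{\epsilon_1}b^{n_1}\tens\cdots\tens a^{\epsilon_k}b^{n_k}$, and $\Sigma_k$ acts simply by permuting the tensor factors. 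Since the action permutes a free $\M_2$-basis, the invariants are themselves free on the orbit sums---a standard fact for permutation modules over a commutative ring.

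The next step is to identify the $\Sigma_k$-orbits with partitions. Each factor $a^{\epsilon_i}b^{n_i}$ is determined by its topological degree $c_i:=\epsilon_i+2n_i\geq 0$, since $\epsilon_i$ is the parity of $c_i$ and $n_i=(c_i-\epsilon_i)/2$. So an orbit corresponds to an unordered $k$-tuple $c_1\geq c_2\geq\cdots\geq c_k\geq 0$, and dropping the zero entries produces exactly a partition of $n:=\sum c_i$ with at most $k$ positive parts; this assignment is a bijection. A direct check shows that $\epsilon_i+n_i=\lceil c_i/2\rceil$ in both parities, so the bidegree of the orbit sum is
\[
\Bigl(\sum_{i=1}^k c_i,\ \sum_{i=1}^k(\epsilon_i+n_i)\Bigr)=\bigl(n,\,w(\sigma)\bigr),
\]
matching the statement.

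Combining these two steps with Theorem~\ref{th:main1} produces the asserted $\M_2$-basis $S$ of $H^{*,*}(\Gr_k(\cU);\Z/2)$. I do not anticipate a serious obstacle here: once Theorem~\ref{th:main1} is granted, the argument is purely bookkeeping on the invariant ring, with the only non-formal input being the elementary observation that invariants of a $G$-permutation module over a commutative ring are free on orbit sums. The genuine work of the paper lies in establishing Theorem~\ref{th:main1}, not in this corollary.
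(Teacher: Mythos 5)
Your proposal is correct and follows essentially the same route as the paper: Theorem~\ref{th:main1} (really its additive content, Proposition~\ref{pr:Y->Z}) combined with the observation that $\Inv_k$ is free on orbit sums of monomials, which are identified with partitions exactly as in the proof of Proposition~\ref{pr:rank-Inv} (your $\lceil c_i/2\rceil$ computation is the paper's ``weight $=\frac{1}{2}(n+\#\text{odd pieces})$'' in different notation). No gaps.
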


It is easy to see that for a partition $\sigma$ of $n$ the weight is
also equal to
\[ w(\sigma)=\tfrac{1}{2}\bigl (n+(\text{\# of odd pieces in
$\sigma$})\bigr ).
\]
Using this description we can reinterpret the theorem as follows:

\begin{cor}
The number of free generators for $H^{*,*}(\Gr_k(\cU);\Z/2)$ having
bidegree $(p,q)$ coincides with the number of partitions of $p$ into
at most $k$ pieces where exactly $2q-p$ of the pieces are odd.
\end{cor}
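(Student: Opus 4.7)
The plan is to deduce the Corollary directly from the preceding Theorem combined with the alternative expression for $w(\sigma)$ displayed just above the Corollary statement. First I would verify that alternative expression by a one-line case analysis: for each part $j_i$ of $\sigma$ one has $\lceil j_i/2\rceil = j_i/2$ when $j_i$ is even and $\lceil j_i/2\rceil = j_i/2 + 1/2$ when $j_i$ is odd. Summing over $i$ and using $\sum_i j_i = n$ gives $w(\sigma) = n/2 + \tfrac{1}{2}(\text{\# odd pieces of }\sigma)$, which is the claimed identity.

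With this identity in hand, I would simply translate the Theorem. By the Theorem, the free $\M_2$-generators of bidegree $(p,q)$ are in bijection with partitions $\sigma$ of $p$ into at most $k$ pieces satisfying $w(\sigma) = q$. Substituting the identity, this equation becomes $\tfrac{1}{2}\bigl(p + (\text{\# odd pieces of }\sigma)\bigr) = q$, which rearranges to require that the number of odd pieces in $\sigma$ is exactly $2q - p$. This is precisely the count asserted by the Corollary.

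There is no real obstacle here; the Corollary is a purely combinatorial reformulation of the Theorem under the elementary identity for $w(\sigma)$. One small sanity check is that when $2q-p$ is negative, exceeds $k$, or has the wrong parity relative to $p$ (for instance, $p$ even with $2q-p$ odd is impossible only if one also demands $\leq k$ pieces, but the parity of $p$ matches the parity of the number of odd pieces in any partition of $p$), the set of qualifying partitions is empty, and the count $0$ is consistent with no basis element occupying that bidegree.
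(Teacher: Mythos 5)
Your proposal is correct and is exactly the paper's argument: the paper establishes the identity $w(\sigma)=\tfrac{1}{2}\bigl(n+(\text{\# of odd pieces})\bigr)$ immediately before the Corollary and then reads the Corollary off from the Theorem by the same substitution $w(\sigma)=q \iff \#\{\text{odd pieces}\}=2q-p$. Nothing further is needed.
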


For example, in $H^{7,*}(\Gr_5(\cU);\Z/2)$ we have basis elements in
weights $4$, $5$, and $6$, corresponding to the partitions
\begin{align*}
[7], [61], [52], [43], [421], [322], [2221] \quad &\text{(weight
$4$/one odd piece)}
\\
[511], [4111], [331], [3211], [22111] \quad &\text{(weight $5$/three odd
pieces)} \\
[31111] \quad &\text{(weight $6$/five odd pieces)}.
\end{align*}

\vspace{0.1in}

We next describe a little about the ring structure.  Unlike what
happens in nonequivariant topology, it is not easy to write down a
simple description of the ring of invariants in terms of generators
and relations---except for small values of $k$.  In essense, the
innocuous-looking relation ``$a^2=\rho a+\tau b$'' propogates itself
viciously into the ring of invariants, leading to some unpleasant
bookkeeping.  However, we are able to give a minimal set of
generators for the algebra, and we investigate the relations in low
dimensions.

First, for $1\leq i\leq k$ there are special classes $w_i\in
H^{i,i}(\Gr_k(\cU);\Z/2)$ that we call \dfn{Stiefel-Whitney classes};
they correspond to the usual Stiefel-Whitney classes in singular
cohomology.  There are also special classes $c_i\in
H^{2i,i}(\Gr_k(\cU);\Z/2)$ that we call
\dfn{Chern classes}; their images in non-equivariant cohomology correspond to the mod $2$ reductions of the
usual Chern classes of the complexification of a bundle.  In some
sense these constitute the ``obvious'' characteristic classes that one
might expect.  It is not true, however, that these generate
$H^{*,*}(\Gr_k(\cU);\Z/2)$ as an algebra.  This is easy to explain in
terms of the ring of invariants.  There are two sets of variables
$a_1,\ldots,a_k$ and $b_1,\ldots,b_k$, with $\Sigma_k$ acting on each
as permuation of the indices.  The class $w_i$ is the $i$th
elementary symmetric function in the $a$'s, and likewise $c_i$ is the
elementary symmetric function in the $b$'s.  But there are many other
invariants, for example $a_1b_1+\cdots+a_kb_k$.  

We let $w_j^{(e)}$ be the characteristic class corresponding to the
invariant element $\sum a_{i_1}\ldots a_{i_j}b_{i_1}^e\ldots b_{i_j}^e$.  
Note that $w_j^{(0)}=w_j$.  This particular choice of invariants is
not the only natural one, but it seems to be convenient in a number of
ways.  Among other things, these characteristic classes satisfy a
Whitney formula
\[ w_j^{(e)}(E\oplus F)=\sum_r w_{r}^{(e)}(E)\cdot w_{j-r}^{(e)}(F).
\]
Using the classes $w_j^{(e)}$ we can write down a minimal set of
algebra generators for $H^{*,*}(\Gr_k(\cU);\Z/2)$:

\begin{prop}
The indecomposables of $H^{*,*}(\Gr_k(\cU);\Z/2)$ are represented by
$c_1,\ldots,c_k$ together with the classes $w_{2^i}^{(e)}$ for $1\leq
2^i \leq k$ and $0\leq e \leq \frac{k}{2^i}-1$.  
\end{prop}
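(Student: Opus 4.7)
By Theorem~\ref{th:main1}, it suffices to analyze the invariant ring
$R := \bigl[H^{*,*}(\Gr_1(\cU);\Z/2)^{\tens k}\bigr]^{\Sigma_k}$,
which we regard as the subring of $\M_2[a_1,\ldots,a_k,b_1,\ldots,b_k]/(a_i^2 = \rho a_i + \tau b_i)$ fixed by the symmetric group. A Nakayama-type argument, carried out as an induction on weight and using that $\rho$ and $\tau$ strictly raise weight, shows that a homogeneous set of classes generates $R$ as an $\M_2$-algebra if and only if their reductions generate the quotient $\bar R := R/(\rho,\tau)$ as an $\F_2$-algebra. So the first step is to pass to $\bar R$.

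In $\bar R$ the relation becomes $a_i^2 = 0$. Consequently the ``$a$-degree'' (the number of $a$-variables in a monomial) is an honest grading on $\bar R$, because any cross-term $a_i^2$ arising in a product is killed. Write $\bar R = \bigoplus_{j=0}^{k}\bar R_j$. At $a$-degree zero, $\bar R_0 = \F_2[b_1,\ldots,b_k]^{\Sigma_k} = \F_2[c_1,\ldots,c_k]$ by the fundamental theorem of symmetric polynomials, which forces the Chern classes into any generating set.

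The higher $a$-degrees are controlled by the following identity, which one verifies directly after killing cross-terms $a_i^2$:
\[
w_{j_1}^{(e)}\cdot w_{j_2}^{(e)} \equiv \binom{j_1+j_2}{j_1}\, w_{j_1+j_2}^{(e)} \pmod{(\rho,\tau)}.
\]
By Kummer's theorem this binomial coefficient is nonzero in $\F_2$ exactly when the binary expansions of $j_1$ and $j_2$ are disjoint; iterating with the binary expansion $j = 2^{i_1} + \cdots + 2^{i_s}$ therefore gives $w_j^{(e)} = w_{2^{i_1}}^{(e)}\cdots w_{2^{i_s}}^{(e)}$ in $\bar R$, so only the $w_{2^i}^{(e)}$ with $2^i \leq k$ are needed. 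To bound the index $e$, I would prove a Newton-like identity inside $\bar R$ generalizing the classical relation $p_n = c_1 p_{n-1} + \cdots + c_k p_{n-k}$ valid for $n \geq k$: in the present mixed setting this expresses $w_{2^i}^{(e)}$ for $e$ outside the stated range as an $\bar R_0$-linear combination of $w_{2^i}^{(e')}$ with $e' < e$. The two reductions together show that the listed classes generate $\bar R$, and hence $R$.

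For minimality, read the bidegree of each proposed indecomposable off of the basis from the corollary preceding the proposition: $c_i$ occupies the basis position of the partition $[2^i]$ and $w_{2^m}^{(e)}$ that of $[(2e+1)^{2^m}]$; within the allowed range $2^m(e+1)\leq k$, one checks that these partitions cannot be written as sums of partitions indexing basis elements in strictly smaller topological degree, which rules out any expression of the class as a product of lower-degree generators. I expect the main technical obstacle to be the mixed Newton-like identity described in the third paragraph; once that identity is in hand, the remainder of the argument is combinatorial bookkeeping.
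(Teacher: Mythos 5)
Your outline follows the same route as the paper's proof (reduce to the ring of invariants, kill $(\rho,\tau)$ to land in $L_k=(\Lambda(a_1,\ldots,a_k)\otimes\F_2[b_1,\ldots,b_k])^{\Sigma_k}$, use the binomial identity $w_{j_1}^{(e)}w_{j_2}^{(e)}=\binom{j_1+j_2}{j_1}w_{j_1+j_2}^{(e)}$ to discard non-$2$-power indices, bound $e$, prove minimality), but two of its steps have genuine gaps. The first is generation: $\bar R$ has an $\F_2$-basis consisting of \emph{all} orbit sums $[a_Ib_J^{d}]$, and almost none of these are of the form $w_j^{(e)}$ or a product of $c$'s. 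You must also dispose of classes such as $[a_1b_2]$, $[a_1a_2b_1]$, and $[a_1b_1^2b_2^3]$ --- monomials mixing bound and free $a$'s, monomials with free $b$'s, and monomials whose bound $b$'s carry unequal exponents. Your product identity says nothing about these, so "the two reductions together show that the listed classes generate $\bar R$" does not follow; this is where most of the work in Appendix A lives (Lemmas~\ref{le:in1} and~\ref{le:in2}, which handle exactly these three families by explicit products and an induction on the number of free $b$'s). Relatedly, the Newton-type identity you propose for bounding $e$ is available for $i=0$ (Lemma~\ref{le:w1e}), but the paper does not prove one for $w_{2^i}^{(e)}$ with $i\geq 1$; it instead shows (Lemma~\ref{le:in3} and Corollary~\ref{co:Ln-gen2}) that $[a_1\cdots a_jb_1^e\cdots b_j^e]$ agrees mod decomposables with $[a_1\cdots a_jb_{j+1}\cdots b_{j+je}]$, which dies in $L_k$ once $j+je>k$. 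You correctly flag this as the unresolved technical point.

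The second gap is minimality. The test "the partition cannot be written as a sum of partitions indexing basis elements of smaller topological degree" is not valid: a product of two basis elements is a sum of many basis elements, and a class indexed by a "reducible" partition can perfectly well be indecomposable. For instance $c_i=[b_1\cdots b_i]$ is indexed by the partition with $i$ parts equal to $2$, which is a union of smaller such partitions, and $c_i$ genuinely occurs as a summand of $c_1c_{i-1}=i\,c_i+[b_1^2b_2\cdots b_{i-1}]$ when $i$ is odd --- yet $c_i$ is indecomposable. The paper proves minimality by computing $L_\infty$ exactly as $\Lambda(\alpha_{i,e})\otimes\F_2[\sigma_1(b),\sigma_2(b),\ldots]$ via a Poincar\'e-series bijection with $\F_2[v_1,v_2,\ldots]$ (Proposition~\ref{pr:stable-iso}), where independence modulo the square of the augmentation ideal is manifest, and then using that $L_\infty\to L_k$ is an isomorphism in degrees at most $k$. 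An argument of that strength is needed; bidegree bookkeeping alone will not produce it.
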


Note that the above result gives a slight surprise when $e=0$.  The equivariant
Stiefel-Whitney classes $w_i$ are indecomposable only when $i$ is a
power of $2$.  This phenomenon is familiar in a slightly different
(but related) context---see \cite[Remark 3.4]{M}.  

In practice it is unwieldy to write down a complete set of relations
for $H^{*,*}(\Gr_k(\cU);\Z/2)$.  To give a sense of this, however, we
do it here for $k=2$:

\begin{prop}
\label{pr:k=2}
The algebra $H^{*,*}(\Gr_2(\cU);\Z/2)$ is the quotient of
the ring $\M_2[c_1,c_2,w_1,w_2,w_1^{(1)}]$ by the following relations:
\begin{itemize}
\item $w_1^2=\rho w_1+\tau c_1$
\item $w_2^2=\rho^2 w_2 + \rho \tau \bigl (w_1c_1+w_1^{(1)}\bigr ) +
\tau^2 c_2$
\item $\bigl [w_1^{(1)}\bigr ]^2 = \rho\bigl (w_1^{(1)}c_1 +
w_1c_2\bigr )+\tau(c_1^3+c_1c_2)$
\item $w_1w_2=\rho w_2 + \tau\bigl (w_1c_1+w_1^{(1)}\bigr )$
\item $w_1w_1^{(1)}=\rho w_1^{(1)}+\tau c_1^2 +w_2c_1$
\item $w_2 w_1^{(1)}=\rho w_2 c_1 + \tau (w_1c_1^2+w_1^{(1)}c_1+w_1c_2)$.
\end{itemize}
The classes $1$, $w_1$, $w_2$, and $w_1^{(1)}$ give a free basis for
$H^{*,*}(\Gr_2(\cU);\Z/2)$ as a module over the subring $\M_2[c_1,c_2]$.

The forgetful map $H^{*,*}(\Gr_2(\cU);\Z/2) \ra
H^*(\Gr_2(\R^\infty);\Z/2)=\Z/2[w_1,w_2]$ from equivariant to
non-equivariant cohomology sends
\begin{itemize}
\item $\rho\mapsto 0$, $\tau\mapsto 1$
\item $w_1\mapsto w_1$, $w_2\mapsto w_2$
\item $c_1\mapsto w_1^2$, \quad $c_2\mapsto w_2^2$, \quad $w_1^{(1)}\mapsto w_1w_2+w_1^3$.  
\end{itemize}
(Note that the final line can be read off from the above relations and
the first two lines).  
\end{prop}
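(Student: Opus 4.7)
The plan is to invoke Theorem \ref{th:main1} to identify $H^{*,*}(\Gr_2(\cU);\Z/2)$ with the $\Sigma_2$-invariants in the tensor square
\[ T := \M_2[a_1, a_2, b_1, b_2]/\bigl(a_1^2 - \rho a_1 - \tau b_1,\ a_2^2 - \rho a_2 - \tau b_2\bigr), \]
where $\Sigma_2$ swaps the two indices. The five named classes lift to the obvious symmetric elements $w_1 = a_1 + a_2$, $w_2 = a_1 a_2$, $c_1 = b_1 + b_2$, $c_2 = b_1 b_2$, and $w_1^{(1)} = a_1 b_1 + a_2 b_2$, in bidegrees $(1,1),(2,2),(2,1),(4,2),(3,2)$ respectively. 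The whole statement then becomes a purely algebraic computation inside $T^{\Sigma_2}$.

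Each of the six relations is verified by a short direct expansion in $T$, using $a_i^2 = \rho a_i + \tau b_i$ and working mod $2$. For instance, $w_1^2 = a_1^2 + a_2^2 = \rho w_1 + \tau c_1$, and
\[ (w_1^{(1)})^2 = a_1^2 b_1^2 + a_2^2 b_2^2 = \rho(a_1 b_1^2 + a_2 b_2^2) + \tau(b_1^3 + b_2^3), \]
which matches the listed formula via the mod-$2$ symmetric-function identities $b_1^3 + b_2^3 = c_1^3 + c_1 c_2$ and $a_1 b_1^2 + a_2 b_2^2 = w_1^{(1)} c_1 + w_1 c_2$. The remaining four relations (the square of $w_2$ and the three cross-products among $\{w_1, w_2, w_1^{(1)}\}$) follow by the same pattern: expand, use the defining relation to eliminate the $a_i^2$'s, and rewrite the result in terms of the five named invariants using elementary two-variable symmetric-function bookkeeping.

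To show that the six relations are exhaustive, I read them as a rewriting system: together they allow any quadratic monomial or pairwise cross-product of elements of $\{w_1, w_2, w_1^{(1)}\}$ to be replaced by an $\M_2[c_1, c_2]$-linear combination of $\{1, w_1, w_2, w_1^{(1)}\}$. This yields a surjection of $\M_2[c_1,c_2]$-modules
\[ \M_2[c_1, c_2]\cdot\{1, w_1, w_2, w_1^{(1)}\} \twoheadrightarrow T^{\Sigma_2} \]
from the quotient algebra displayed in the statement onto the invariant ring. To see this surjection is an isomorphism, I compare bigraded $\M_2$-ranks: the additive basis theorem above identifies the $\M_2$-rank of $T^{\Sigma_2}$ in each bidegree with a count of partitions of $n$ into at most two parts with prescribed weight, and a direct count using the bidegrees of $c_1, c_2, w_1, w_2, w_1^{(1)}$ shows the source has the same bigraded rank (the four module generators neatly account for the four residues of a two-part partition modulo the parity of its parts). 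This forces the surjection to be an isomorphism, establishing both the completeness of the relations and the free-basis claim over $\M_2[c_1, c_2]$.

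The main obstacle is really just bookkeeping: tracking $\rho$ and $\tau$ through six routine expansions, and matching the bigraded Hilbert series term by term against the partition generating function. No conceptual difficulty arises beyond Theorem \ref{th:main1}. Finally, the forgetful map to $H^*(\Gr_2(\R^\infty);\Z/2) = \Z/2[w_1, w_2]$ is immediate: setting $\rho = 0$ and $\tau = 1$ collapses the defining relation to $b_i = a_i^2$, whence $c_1 = a_1^2 + a_2^2 = w_1^2$, $c_2 = (a_1 a_2)^2 = w_2^2$, and $w_1^{(1)} = a_1^3 + a_2^3 = w_1^3 - 3 w_1 w_2 = w_1^3 + w_1 w_2$ in characteristic $2$.
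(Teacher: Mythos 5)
Your overall route is the same as the paper's: identify $H^{*,*}(\Gr_2(\cU);\Z/2)$ with the invariant ring $\Inv_2$ via Theorem~\ref{th:main1}, verify the six relations by expanding in $\M_2[a_1,a_2,b_1,b_2]/(a_i^2=\rho a_i+\tau b_i)$, and then conclude by matching bigraded ranks (the paper's Poincar\'e-series comparison, fed by the partition count of Proposition~\ref{pr:rank-Inv}). The individual computations you display are correct, as is the evaluation of the forgetful map.

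There is one genuine gap in the middle step. Your rewriting-system argument shows that the quotient algebra $\M_2[c_1,c_2,w_1,w_2,w_1^{(1)}]/(R)$ is spanned over $\M_2[c_1,c_2]$ by $\{1,w_1,w_2,w_1^{(1)}\}$; equivalently, it shows that the \emph{image} of this quotient in $T^{\Sigma_2}$ is so spanned. But the arrow you then write, $\M_2[c_1,c_2]\cdot\{1,w_1,w_2,w_1^{(1)}\}\twoheadrightarrow T^{\Sigma_2}$, is a surjection only if the five classes generate $T^{\Sigma_2}$ as an $\M_2$-algebra, and nothing in your argument establishes that. The rank comparison alone cannot rescue this: a degreewise map between free modules of equal finite bigraded rank need not be an isomorphism unless you already know it is surjective (or injective) in each bidegree. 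The paper supplies exactly this missing input as Theorem~\ref{th:indecomp}(a) (proved via the appendix on the deRham ring of invariants), which identifies $c_1,c_2,w_1,w_1^{(1)},w_2$ as a generating set for $\Inv_2$. For $k=2$ one can also check generation directly by showing each basis element $[a_1^{\epsilon_1}a_2^{\epsilon_2}b_1^{d_1}b_2^{d_2}]$ lies in the subalgebra generated by the five classes (induction on degree), but some such argument must be included before the rank count can close the proof.
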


The complexity of the above description is discouraging, but the main
point is really that (a) it can be done, and (b) it is tedious but
mostly mechanical.   We discuss both the cases $k=2$ and $k=3$ in
detail in Section~\ref{se:examples}.

\begin{remark}
In nonequivariant topology there is the relation $c_i(E\tens
\C)=w_i^2(E)$. The first two relations in Proposition~\ref{pr:k=2}
should be thought of as deformations of this nonequivariant relation.
\end{remark}

One might expect the problem of describing the rings
$H^{*,*}(\Gr_k(\cU);\Z/2)$ to become
more tractable as $k\mapsto \infty$.   In some ways it does, but even
in this case we
have not found a convenient way to write down a complete set of relations.
See Proposition~\ref{pr:stable} for more information.

\subsection{Open questions}

\begin{enumerate}[(1)]
\item Our computations produce the full set of characteristic classes
for equivariant real vector bundles, taking values in
$H^{*,*}(\blank;\Z/2)$.  It remains to investigate possible uses for
such classes, and in particular their ties to geometry.

\item
In the classical case another way to describe the ring structure on the
cohomology of Grassmannians is combinatorially, via
Littlewood-Richardson rules.  It might be useful to work out
equivariant versions of these rules, and to describe the ring structure
that way instead of by generators and relations.  

\item There is an interesting duality that appears in our description
of  the cohomology ring
for $H^{*,*}(\Gr_k(\cU);\Z/2)$.  See Corollary~\ref{co:duality} and
the charts preceding it.  Is there
some geometry underlying this duality?

\item
Although we have computed the bigraded cohomology of the infinite
Grassmannians $\Gr_k(\cU)$, our techniques do not yield the cohomology
of the finite Grassmannians (in which $\cU$ is replaced by a
finite-dimensional subspace).  The reason is tied to our inability to
resolve all the differentials in the cellular spectral sequence.  
So computing the cohomology in these
cases remains an open problem.  

\item We have not developed any understanding of how to analyze
differentials in cellular spectral sequences, since the approach of
this paper essentially amounts to a sneaky way of avoiding this.  
Developing a method for computing such differentials, and connecting
them to geometry, is an important area for exploration.

\item If $\C^\infty$ is given the conjugation action, then the space
of complex $k$-planes $\Gr_k(\C^\infty)$ has simple cohomology, even
integrally: $H^{*,*}(\Gr_k(\C^\infty);\Z)=\Z[c_1,c_2,\ldots]$ where
the Chern classes $c_i$ have bidegree $(2i,i)$.  These are
the characteristic classes for Real vector bundles (where `Real' is in
the sense of Atiyah \cite{A2}).  One can attempt a similar computation
but replacing $\C^\infty$ with
$\C\tens \cU$: non-equivariantly this is still $\C^\infty$, but the
action is different---it is $\C$-linear rather than conjugate-linear.  The
computation of $H^{*,*}(\Gr_k(\C\tens \cU);\Z)$ seems to be an open
problem, that could perhaps be tackled by the methods of this paper.
See \cite{FL} for some relevant, early computations.

\item The initial motivation of this work was an interest in motivic
characteristic classes for quadratic bundles, generalizing the
Stiefel-Whitney classes of Delzant \cite{De} and Milnor \cite{M}; see
Section~\ref{se:motivic} for the connection with the present paper.  The original motivic
question remains unsolved.
\end{enumerate}

\subsection{Organization of the paper}
Section~\ref{se:background} gives some brief background about the theory
$H^{*,*}(\blank;\Z/2)$.  Section~\ref{se:add-basis} gives a first look at the ring of
invariants $[H^{*,*}(\Gr_1(\cU);\Z/2)^{\tens k}\Bigr ]^{\Sigma k}$,
and we provide an additive basis over the ground ring $\M_2$.  We also
measure the size of this ring by counting the elements of this free
basis that appear in each bidegree.

In Section~\ref{se:cells} we describe the equivariant Schubert-cell decomposition
of $\Gr_k(\cU)$.  A key point here is counting the number of Schubert
cells in each bidegree.  We also introduce the associated spectral
sequence for computing $H^{*,*}(\Gr_k(\cU);\Z/2)$, and in
Section~\ref{se:differentials} we discuss
Kronholm's theorems about this spectral sequence.

Section~\ref{se:proof} contains the main topological part of the paper.  Using the
results of Sections~\ref{se:add-basis}--\ref{se:differentials}
we prove that $H^{*,*}(\Gr_k(\cU);\Z/2)$ is isomorphic to the
expected ring of invariants (Theorem~\ref{th:main1}).  

In Section~\ref{se:mult} we turn to the multiplicative structure of our ring of
invariants.  We calculate some relations here, and we identify
a minimal set of generators.  This section is entirely algebraic.  
Section~\ref{se:examples} then gives a presentation for the ring of invariants in the
cases $k=2$ and $k=3$.

Finally, Section~\ref{se:motivic} describes the connection between the present work
and a certain motivic problem about characteristic classes of
quadratic bundles.  The results of this section are not needed
elsewhere in the paper.  An appendix is enclosed which calculates
the ring of invariants for $\Sigma_n$ acting on
$\Lambda_{\F_2}(a_1,\ldots,a_n)\tens_{\F_2} \F_2[b_1,\ldots,b_n]$
by permutation of the indices.  This purely algebraic result is needed
in the body of the text, and we were unable to find a suitable reference.

\vspace{0.1in}

Throughout this paper, if $X$ is a $\Z/2$-space then we write
$\sigma\colon X\ra X$ for the involution.  
For general background on $RO(G)$-graded equivariant cohomology
theories we refer the reader to \cite{Ma}.

\subsection{Acknowledgments} 
I am grateful to Mike Hopkins for a useful conversation about this
subject, and to John Greenlees for expressing some early interest.


\section{Background on equivariant cohomology}
\label{se:background}

Recall that $\M_2$ denotes the cohomology ring $H^{*,*}(\pt;\Z/2)$.
This ring is best depicted via the following diagram:

\begin{picture}(300,220)(-80,-30)
\multiput(-15,-15)(0,15){13}{\line(1,0){180}}
\multiput(-15,-15)(15,0){13}{\line(0,1){180}}
\multiput(82,82)(0,15){6}{\circle*{3}}
\multiput(97,97)(0,15){5}{\circle*{3}}
\multiput(112,112)(0,15){4}{\circle*{3}}
\multiput(127,127)(0,15){3}{\circle*{3}}
\multiput(142,142)(0,15){2}{\circle*{3}}
\multiput(157,157)(0,15){1}{\circle*{3}}
\multiput(82,52)(0,-15){5}{\circle*{3}}
\multiput(67,37)(0,-15){4}{\circle*{3}}
\multiput(52,22)(0,-15){3}{\circle*{3}}
\multiput(37,7)(0,-15){2}{\circle*{3}}
\multiput(22,-8)(0,-15){1}{\circle*{3}}
\put(-10,75){\vector(1,0){180}}
\put(-10,74){\line(1,0){176}}
\put(160,75){\vector(-1,0){180}}
\put(160,74){\line(-1,0){176}}
\put(75,-10){\vector(0,1){180}}
\put(75,20){\vector(0,-1){40}}
\put(74,-10){\line(0,1){176}}
\put(74,20){\line(0,-1){36}}
\thinlines
\put(172,71){$p$}
\put(75,174){$q$}
\put(100,95){$\scriptstyle{\rho}$}
\put(76.5,95){$\scriptstyle{\tau}$}
\put(76.5,80){$\scriptstyle{1}$}
\put(84,50){$\scriptstyle{\theta}$}
\thicklines
\put(82,97){\line(1,1){60}}
\put(82,112){\line(1,1){45}}
\put(82,127){\line(1,1){30}}
\put(82,142){\line(1,1){15}}
\put(97,97){\line(0,1){62}}
\put(112,112){\line(0,1){47}}
\put(127,127){\line(0,1){32}}
\put(142,142){\line(0,1){17}}
\put(82,82){\line(1,1){80}}
\put(82,82){\line(0,1){80}}
\put(82,52){\line(0,-1){65}}
\put(82,52){\line(-1,-1){65}}
\put(82,37){\line(-1,-1){50}}
\put(82,22){\line(-1,-1){35}}
\put(82,7){\line(-1,-1){20}}
\put(67,37){\line(0,-1){47}}
\put(52,22){\line(0,-1){32}}
\put(37,7){\line(0,-1){17}}
\put(22,-8){\line(0,-1){2}}
\end{picture}

Each dot represents a $\Z/2$, each vertical line represents a
multiplication by $\tau$, and each diagonal line represents
multiplication by $\rho$.  In the ``positive'' range $p,q \geq 0$, the
ring is therefore just $\Z/2[\tau,\rho]$.  In the negative range there
is an element $\theta\in \M_2^{0,-2}$ together with elements that one
can formally denote $\frac{\theta}{\tau^k\rho^l} \in
\M_2^{-l,-2-k-l}$.  After specifying $\theta^2=0$ 
this gives a complete description of the ring
$\M_2$.  We will refer to the subalgebra $\Z/2[\tau,\rho] \subseteq
\M_2$ as the \dfn{positive cone}, and the direct sum of all
$\M_2^{p,q}$ for $q<0$ will be called the \dfn{negative cone}.  
See \cite{C} and \cite{D} for more background on this coefficient ring.

There are natural transformations $H^{p,q}(X;\Z/2)\ra
H^p_{sing}(X;\Z/2)$ from our bigraded cohomology to ordinary singular
cohomology.  These are compatible with the ring structure, and when
$X$ is a point they send $\tau\mapsto 1$ and $\rho\mapsto 0$.  Since
everything in the negative cone is a multiple of $\rho$, it
follows that the entire negative cone of $\M_2$ is sent
to $0$.  

\subsection{The graded rank functor}
Let $I\inc \M_2$ denote the kernel of the projection map $\M_2 \ra
\Z/2$.  Let $M$ be a bigraded, finitely-generated free module over $\M_2$.
Define the \dfn{bigraded rank} of $M$ by the formula
\[ \rank^{p,q} M = \dim_{\Z/2} (M/IM)^{p,q}.
\]
So $\rank M$ should be regarded as a function 
$\Z^2 \ra \Z_{\geq 0}$.
Clearly $M$ is determined, up to isomorphism, by its bigraded rank.

It is usually easiest to depict the bigraded rank as a chart.  For
example, the bigraded rank of $H^{*,*}(\Gr_1(\cU);\Z/2)$ is

\begin{picture}(300,120)(-50,0)
\multiput(0,0)(0,15){8}{\line(1,0){165}}
\multiput(0,0)(15,0){12}{\line(0,1){105}}
\put(5,5){$1$}
\put(20,20){$1$}
\put(35,20){$1$}
\put(50,35){$1$}
\put(65,35){$1$}
\put(80,50){$1$}
\put(95,50){$1$}
\put(110,65){$1$}
\put(125,65){$1$}
\put(140,80){$1$}
\put(155,80){$1$}
\put(168,-2){$p$}
\put(-7,102){$q$}
\put(0,0){\circle*{4}}
\end{picture}

\vspace{0.2in}

\noindent
where the lower left corner is the $(0,0)$ spot and all unmarked boxes
are regarded as having a $0$ in them.

\section{An additive basis for  the ring of invariants}
\label{se:add-basis}

Let $R=\M_2[a,b]/(a^2=\rho a+\tau b)$ where $a$ has degree $(1,1)$ and
$b$ has degree $(2,1)$.  Fix $k\geq 1$ and let $T_k=R^{\tens k}$.  Let
$\Sigma_k$ act on $T_k$ in the evident way, as permutation of the
tensor factors.  Define
$\Inv_k = [T_k]^{\Sigma_k}$.
Our goal in this section is to investigate an additive basis for the
algebra $\Inv_k$, regarded as a module over $\M_2$.  The
multiplicative structure of this ring will be discussed in
Section~\ref{se:mult}.    

\medskip

It will be convenient to rename the variables in the $i$th copy of $R$
as $a_i$ and $b_i$.  So $T_k$ is the quotient of
$\M_2[a_1,\ldots,a_k,b_1,\ldots,b_k]$
by the relations $a_i^2=\rho a_i+\tau b_i$, for $1\leq i\leq k$.  Let 
\[ w_i=\sigma_i(a_1,\ldots,a_k) \]
be the $i$th elementary symmetric function in the $a$'s, and
let
\[ c_i=\sigma_i(b_1,\ldots,b_k)
\]
be the $i$th elementary symmetric function in the $b$'s.  These are
the most obvious elements of $\Inv_k$, but there are others as well.
For example, the element $a_1b_1+a_2b_2+\cdots+a_kb_k$ is invariant
under the action of $\Sigma_k$.  We will need some notation to help us
describe these other elements of $\Inv_k$. 

If $m$ is a
monomial in the $a$'s and $b$'s, write $[m]$ for the smallest
homogeneous polynomial in $T_k$ which contains $m$ as one of its terms.
By `smallest' we mean the smallest number of monomial summands.  
If $H\leq \Sigma_k$ is the stabilizer of $m$, then $[m]$ is the sum
$\sum_{gH\in \Sigma_k/H} gm$.  
Here are some examples:
\begin{enumerate}[(i)]
\item
$[a_1b_1]=a_1b_1+a_2b_2+\ldots+a_kb_k$
\item 
$[a_1b_1b_2]=\sum\limits_{i \neq j} a_ib_ib_j$
\item
$[a_1b_2b_3]=\sum\limits_{\text{$i,j,k$ distinct}}a_ib_jb_k$
\item $[a_1a_2]=w_2$.
\end{enumerate}

Notice that 
\[ [a_1^2b_2]=\sum_{i\neq j} a_i^2b_j = \sum_{i\neq j} (\rho
a_i +\tau b_i)b_j= \rho \sum_{i\neq j} a_ib_j + \tau \sum_{i\neq
j} b_ib_j = \rho[a_1b_2]+\tau[b_1b_2].
\]
A similar computation shows that if $m$ is any monomial with an
$a_i^2$ then $[m]$ is an $\M_2$-linear combination of monomials
$[m_j]$ with $\deg m_j < \deg m$.  

The following proposition is fairly clear:

\begin{prop}
As an $\M_2$-module, $\Inv_k$ is free with basis consisting of all elements
$[a_1^{\epsilon_1}\ldots a_k^{\epsilon_k}b_1^{d_1}\ldots b_k^{d_k}]$,
where each $d_i \geq 0$ and each $\epsilon_i \in \{0,1\}$.  
\end{prop}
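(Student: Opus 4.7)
The plan is to reduce the proposition to the standard observation that when a finite group acts on a free module by permuting a basis, the invariants form a free submodule with basis given by orbit sums.

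First I would verify that $R = \M_2[a,b]/(a^2 = \rho a + \tau b)$ is a free $\M_2$-module with basis $B = \{a^\epsilon b^d : \epsilon \in \{0,1\},\ d \geq 0\}$. Spanning is immediate: using the relation, induction on $n$ rewrites $a^n$ as an $\M_2$-linear combination of elements of $B$, so every element of $R$ lies in the $\M_2$-span of $B$. Linear independence is exactly the content of the computation of $H^{*,*}(\Gr_1(\cU);\Z/2)$ quoted in the introduction from \cite[Theorem 4.2]{K2}: that result identifies $R$ with the cohomology ring of $\Gr_1(\cU)$ and exhibits $B$ as a free $\M_2$-basis via the enumeration $1,a,b,ab,b^2,ab^2,\ldots$.

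Second, I would pass to the $k$-fold tensor power. Since the tensor product (over $\M_2$) of free modules is free on the product basis, $T_k = R^{\otimes k}$ is a free $\M_2$-module with basis
\[ B_k = \{a_1^{\epsilon_1}\cdots a_k^{\epsilon_k} b_1^{d_1}\cdots b_k^{d_k} : \epsilon_i \in \{0,1\},\ d_i \geq 0\}. \]
The symmetric group $\Sigma_k$ acts on $T_k$ by permuting tensor factors, and this action restricts to an action on $B_k$ by permutation.

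Finally, I would invoke the following purely formal lemma: if a group $G$ acts on a free $A$-module $F$ by permuting a basis $B$, then $F^G$ is $A$-free on the orbit sums $\{[m] : m \in B\}$, one per $G$-orbit. Each orbit sum is manifestly invariant; conversely if $x = \sum_m \alpha_m m$ is $G$-invariant then $\alpha_{gm} = \alpha_m$ for every $g$, so the coefficients are constant on orbits and $x$ is an $A$-combination of orbit sums. Linear independence follows from the linear independence of $B$ together with the disjointness of orbits. Applying this with $A = \M_2$, $F = T_k$, $B = B_k$, $G = \Sigma_k$ produces exactly the claimed basis for $\Inv_k$.

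The only genuinely substantive point is the first step — the freeness of $R$ over $\M_2$ — and this is already in the literature, so the proposition should drop out with essentially no additional topological input. There is no real obstacle; the argument is bookkeeping around a general principle about permutation representations.
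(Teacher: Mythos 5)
Your argument is correct and is exactly the bookkeeping the paper leaves implicit when it calls the proposition ``fairly clear'': freeness of $R$ over $\M_2$, freeness of the tensor power on the product basis, and the orbit-sum lemma for a group permuting a free basis. The only comment worth making is that the first (and, as you say, only substantive) step needs no topological input: $R=\M_2[b][a]/(a^2+\rho a+\tau b)$ is the quotient of a polynomial ring in $a$ by a monic quadratic, hence free over $\M_2[b]$ on $\{1,a\}$ and therefore free over $\M_2$ on $\{b^d,\,ab^d\}$, so the citation of the cohomology computation of $\Gr_1(\cU)$ can be dispensed with.
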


Our next task is to count how many of the above basis elements appear
in any given bidegree.  Write $\Mon_k$ for the set of monomials in the
variables $a_1,\ldots,a_k,b_1,\ldots,b_k$ having the property that the
exponent on each $a_i$ is at most $1$.  The above proposition implies
that $\Inv_k$ has a free basis over $\M_2$ that is in bijective
correspondence with the set of orbits $\Mon_k\!/\Sigma_k$; this
correspondence preserves the bigraded degree.
From now on we will refer to this basis 
as THE free basis for $\Inv_k$.  We can easily write down a list
of these basis elements in any given bidegree.  For instance, here is
the list in low dimensions, assuming $k$ is large (with the degrees of
the elements given to the left):
\[ \xymatrixrowsep{0.2pc}\xymatrix{
(1,1): & [a_1] && (4,3): & [a_1a_2b_1], [a_1a_2b_3] \\
(2,1): & [b_1] && (4,4): & [a_1a_2a_3a_4] \\
(2,2): & [a_1a_2] && (5,3): & [a_1b_1^2], [a_1b_2^2], [a_1b_1b_2], [a_1b_2b_3] \\
(3,2): & [a_1b_1], [a_1b_2] && (5,4): & [a_1a_2a_3b_1], [a_1a_2a_3b_4]  \\
(3,3): & [a_1a_2a_3] && (5,5): &  [a_1a_2a_3a_4a_5]\\
(4,2): & [b_1^2], [b_1b_2] && (6,3): & [b_1b_2b_3],[b_1^3],[b_1^2b_2] \\
}
\]

We can count the number of generators in each bidegree in terms of
certain kinds of partitions.  Given $n$, $k$, and $j$, let
$\prt{n}{k}{j}$ denote the number of partitions of $n$ into $k$
nonnegative integers such that exactly $j$ of the integers are odd.
For example, $\prt{8}{5}{4}= 4$ because it counts the following
partitions: $01133$, $01115$, $11114$, and $11123$.

\begin{prop}
\label{pr:rank-Inv}
For any $p$, $q$, and $k$ one has
$\rank^{p,q}(\Inv_k)=\prt{p}{k}{2q-p}$.
\end{prop}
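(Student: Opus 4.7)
The plan is to exhibit an explicit bijection between the elements of the free basis for $\Inv_k$ living in bidegree $(p,q)$ and the set of partitions of $p$ into $k$ nonnegative parts with exactly $2q-p$ odd parts.

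First I would compute the bidegree of a typical basis monomial. Recall that $a_i$ has bidegree $(1,1)$ and $b_i$ has bidegree $(2,1)$. Thus a monomial of the form
\[ m=a_1^{\epsilon_1}\cdots a_k^{\epsilon_k}b_1^{d_1}\cdots b_k^{d_k}, \qquad \epsilon_i\in\{0,1\},\ d_i\geq 0, \]
lies in bidegree $(p,q)$ with $p=\sum_i \epsilon_i + 2\sum_i d_i$ and $q=\sum_i \epsilon_i+\sum_i d_i$. Rearranging gives $2q-p=\#\{i:\epsilon_i=1\}$ and $p-q=\sum_i d_i$. By the preceding proposition, $\rank^{p,q}(\Inv_k)$ is the number of $\Sigma_k$-orbits of such monomials in bidegree $(p,q)$, or equivalently the number of multisets $\{(\epsilon_i,d_i)\}_{i=1}^{k}$ of $k$ pairs satisfying these two conditions.

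Next I would define the bijection. To an orbit represented by $m$, assign the (unordered) $k$-tuple of nonnegative integers $j_i:=\epsilon_i+2d_i$, regarded as a partition of $p$ into $k$ nonnegative parts (zero parts allowed). Since $j_i\equiv \epsilon_i\pmod 2$, the number of odd parts is exactly $\#\{i:\epsilon_i=1\}=2q-p$, and $\sum_i j_i=p$ by the degree computation. Conversely, given a partition $j_1,\ldots,j_k$ of $p$ with exactly $2q-p$ odd parts, the decomposition $j_i=\epsilon_i+2d_i$ with $\epsilon_i\in\{0,1\}$ and $d_i=\lfloor j_i/2\rfloor$ is forced by the parity of $j_i$. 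The resulting multiset $\{(\epsilon_i,d_i)\}$ gives an orbit in $\Mon_k/\Sigma_k$, and these two assignments are mutually inverse.

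There is not really a hard step here: once the bidegree of the generic basis monomial is computed and the role of $\epsilon_i$ as the parity bit is recognized, the rest is essentially bookkeeping. The one thing to be careful about is allowing zero parts in the partition (which corresponds to factors with $\epsilon_i=0$ and $d_i=0$, i.e.\ the identity factor in the tensor product), so that the count matches $\prt{p}{k}{2q-p}$ as defined.
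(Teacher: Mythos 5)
Your proof is correct and is essentially the same as the paper's: the paper also sends the orbit of a monomial to the partition whose parts are the degrees $\epsilon_i+2d_i$ of its "pure" factors $a_i^{\epsilon_i}b_i^{d_i}$, and verifies exactly the same degree and parity bookkeeping. The only cosmetic difference is that you write the bijection via explicit formulas for $(\epsilon_i,d_i)$ rather than via the paper's language of pure monomials.
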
 

\begin{proof}
Let $w$ be a monomial in the variables
$a_1,\ldots,a_k,b_1,\ldots,b_k$ where each $a_i$ appears at most once.
We'll say that $w$ is \dfn{pure} if
all the symbols in $w$ have the same subscript: e.g., $a_1b_1^3$ is
pure, but $a_1a_2b_1^2$ is not.  The monomial $w$ can be written in a
unique way as $w=w(1)w(2)\cdots w(k)$ where each $w(i)$ is pure and
only contains the subscript $i$.  

Regard $a_i$ as having degree $1$ and $b_i$ as having degree $2$.
If $v$ is a pure monomial, let $d(v)$ be its total degree.
Finally, if $w$ is any monomial then let $\eta(w)$ be the partition
\[ \eta(w)=[d(w(1)),\,d(w(2)),\,\ldots,\,d(w(k))].
\]
For example, if $w=a_1a_2a_3b_1^2b_2b_4$ then $\eta(w)=[5312]$.  

It is clear that all the $\Sigma_k$-cognates of $w$ give rise to the
same partition, and so we have a function
\[ \Mon_k\!/\Sigma_k \llra{\eta} \{\text{partitions with $\leq k$
pieces}\}.
\]  
Moreover, this is a bijection because the partition is enough to
recover the invariant element $[w]$: if the $i$th number in our
partition is $2r$ then we write $b_i^r$, and if it is $2r+1$ we write
$a_ib_i^r$, and then we multiply these terms together.  For example,
given the partition $[34678]$ we would write
$[a_1b_1b_2^2b_3^3a_4b_4^3b_5^4]$.  This apparently depends on the
order in which we listed the numbers in the partition, but this
dependence goes away when we take the $\Sigma_k$-orbit.

Clearly the topological degree of the monomial $w$ equals the sum of
the elements in the partition $\eta(w)$.  Also, the number of odd
elements of the partition is equal to the number of $a_i$'s in $w$.
But one readily checks that 
\begin{align*} \text{weight of $w$} = \#b_i\text{'s} + \# a_i\text{'s} &=
\frac{\text{(topl. degree of $w$)}-\#a_i\text{'s}}{2} \, \,+ \, \# a_i\text{'s}\\
& =
\frac{\text{(topl. degree of $w$)}
+ \#a_i\text{'s}}{2}.
\end{align*}
So the number of odd elements in the partition $\eta(w)$ is $2q-p$,
where $q$ is the weight of $w$ and $p$ is the topological degree of
$w$.
\end{proof}

As an example of the above proposition, here is a portion of the
bigraded rank function for $\Inv_4$:

\begin{picture}(300,140)(-50,0)
\multiput(0,0)(0,15){10}{\line(1,0){225}}
\multiput(0,0)(15,0){16}{\line(0,1){135}}
\multiput(5,5)(15,15){5}{$1$}
\put(35,20){$1$}
\multiput(50,35)(15,15){3}{$2$}
\put(80,50){$4$}
\put(65,35){$2$}
\put(95,50){$3$}
\put(95,65){$5$}
\put(95,80){$1$}
\put(110,65){$7$}
\put(110,80){$4$}
\put(125,65){$5$}
\put(125,80){$8$}
\put(125,95){$2$}
\put(138,80){$11$}
\put(140,95){$7$}
\put(155,80){$6$}
\put(152,95){$14$}
\put(155,110){$3$}
\put(168,95){$16$}
\put(168,110){$11$}
\put(185,95){$9$}
\put(182,110){$20$}
\put(185,125){$5$}
\put(198,110){$23$}
\put(198,125){$16$}
\put(213,110){$11$}
\put(213,125){$30$}
\put(228,-2){$p$}
\put(-7,132){$q$}
\put(0,0){\circle*{4}}
\end{picture}

And here is a similar chart for $\Inv_5$:

\begin{picture}(300,150)(-50,-5)
\multiput(0,0)(0,10){15}{\line(1,0){220}}
\multiput(0,0)(10,0){23}{\line(0,1){140}}
\multiput(2.5,2.5)(10,10){6}{$\scriptstyle{1}$}
\put(22.5,12.5){$\scriptstyle{1}$}
\multiput(32.5,22.5)(10,10){4}{$\scriptstyle{2}$}
\put(72.5,62.5){$\scriptstyle{1}$}
\multiput(42.5,22.5)(50,50){2}{$\scriptstyle{2}$}
\multiput(52.5,32.5)(30,30){2}{$\scriptstyle{4}$}
\multiput(62.5,42.5)(10,10){2}{$\scriptstyle{5}$}
\multiput(62.5,32.5)(50,50){2}{$\scriptstyle{3}$}
\multiput(72.5,42.5)(30,30){2}{$\scriptstyle{7}$}
\multiput(82.5,52.5)(10,10){2}{$\scriptstyle{9}$}
\multiput(82.5,42.5)(50,50){2}{$\scriptstyle{5}$}
\multiput(91.5,52)(30,30){2}{$\scriptstyle{12}$}
\multiput(101,62)(10,10){2}{$\scriptstyle{16}$}
\multiput(102.5,52.5)(50,50){2}{$\scriptstyle{7}$}
\multiput(111.5,62)(30,30){2}{$\scriptstyle{18}$}
\multiput(122,72)(10,10){2}{$\scriptstyle{25}$}
\multiput(121.5,62.5)(50,50){2}{$\scriptstyle{10}$}
\multiput(131.5,72.5)(30,30){2}{$\scriptstyle{27}$}
\multiput(141,82.5)(10,10){2}{$\scriptstyle{39}$}
\multiput(141.5,72.5)(50,50){2}{$\scriptstyle{13}$}
\multiput(151.5,82)(30,30){2}{$\scriptstyle{38}$}
\multiput(161,92)(10,10){2}{$\scriptstyle{56}$}
\multiput(161.5,82.5)(50,50){2}{$\scriptstyle{18}$}
\multiput(171.5,92)(30,30){2}{$\scriptstyle{53}$}
\multiput(181,102)(10,10){2}{$\scriptstyle{80}$}
\multiput(181.5,92.5)(50,50){1}{$\scriptstyle{23}$}
\multiput(191.5,102)(30,30){1}{$\scriptstyle{71}$}
\multiput(200,112)(10,10){2}{$\scriptscriptstyle{109}$}
\multiput(201.5,102.5)(50,50){1}{$\scriptstyle{30}$}
\multiput(211.5,112)(30,30){1}{$\scriptstyle{94}$}
\put(225,-2){$p$}
\put(-7,135){$q$}
\put(0,0){\circle*{4}}
\end{picture}

There are some evident patterns in these charts.  For example,
if one starts at spot $(2p,p)$ and reads diagonally upwards along a line
of slope $1$ then the resulting numbers have an evident symmetry.
This comes from a symmetry of the $\prt{n}{k}{j}$ numbers:

\begin{lemma}
\label{le:part-duality}
For any $n$, $k$, and $j$, one has
$\prt{n}{k}{j}=\prt{n+(k-2j)}{k}{k-j}$.  
\end{lemma}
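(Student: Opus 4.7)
The plan is to exhibit an explicit bijection between partitions of $n$ into $k$ nonnegative pieces with exactly $j$ odd pieces, and partitions of $n + (k - 2j)$ into $k$ nonnegative pieces with exactly $k - j$ odd pieces. The bijection is the obvious parity-flip: send the partition $[x_1, x_2, \ldots, x_k]$ to $[y_1, y_2, \ldots, y_k]$ where
\[
y_i = \begin{cases} x_i + 1 & \text{if } x_i \text{ is even,} \\ x_i - 1 & \text{if } x_i \text{ is odd.} \end{cases}
\]
Since each piece is a nonnegative integer and we only subtract $1$ from odd pieces (so never from $0$), the result is again a sequence of nonnegative integers.

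Next, I would verify the three properties that make this a proof. First, the new tuple is again a partition (i.e., can be reordered into weakly increasing order) because the transformation is applied to each part individually and does not depend on ordering. Second, the number of odd parts in $[y_1, \ldots, y_k]$ is exactly $k - j$, since each originally even $x_i$ becomes odd and each originally odd $x_i$ becomes even. Third, the new sum is
\[
\sum_{i=1}^k y_i = \sum_{i=1}^k x_i + \bigl((k-j) \cdot 1 + j \cdot (-1)\bigr) = n + (k - 2j),
\]
as required.

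Finally, I would observe that applying the map twice returns every piece to its original value (each even goes to odd and back, each odd goes to even and back), so the map is an involution. In particular, the map from the source set to the target set has an explicit two-sided inverse (itself, once we swap the roles of $j$ and $k-j$ and of $n$ and $n + (k-2j)$), hence is a bijection, which yields the stated equality of cardinalities.

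There is essentially no obstacle here: the statement is a combinatorial identity, and the parity-flip bijection is forced by the bookkeeping in the statement. The only thing to take care about is the nonnegativity constraint when subtracting $1$ from an odd part, but this is automatic since $1$ is the smallest odd nonnegative integer.
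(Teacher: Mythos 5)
Your proof is correct and is essentially identical to the paper's: the paper also uses the parity-flip map (subtract $1$ from each odd part, add $1$ to each even part) and checks the same bookkeeping. Your additional remarks on nonnegativity and the involution property are fine but do not change the argument.
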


\begin{proof}
Suppose $u_1,\ldots,u_k$ is a partition of $n$ in which there are
exactly $j$ odd numbers---we can arrange the indices so that these are
$u_1,\ldots,u_j$.  Subtract $1$ from all the odd numbers and add $1$
to all the even numbers: this yields the collection of numbers
$u_1-1,\ldots,u_j-1,u_{j+1}+1,\ldots,u_k+1$.  This is a partition of
$n+k-2j$ in which there are exactly $k-j$ odd numbers.  One readily
checks that this gives a bijection between the two kinds of partitions.
\end{proof}

The diagonal symmetries in our rank charts are as follows:

\begin{cor}
\label{co:duality}
For any $p$, $r$, and $k$, one has
\[ \rank^{2p+r,p+r}(\Inv_k)=\rank^{2p+k-r,p+k-r}(\Inv_k).
\]
\end{cor}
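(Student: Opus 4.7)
The plan is to reduce the identity to a direct application of Proposition~\ref{pr:rank-Inv} followed by Lemma~\ref{le:part-duality}. First I would translate both sides of the claimed equality into partition counts. Using Proposition~\ref{pr:rank-Inv} with $(p,q)=(2p+r,p+r)$, the left-hand side becomes
\[ \rank^{2p+r,p+r}(\Inv_k) = \prt{2p+r}{k}{2(p+r)-(2p+r)} = \prt{2p+r}{k}{r}, \]
and with $(p,q)=(2p+k-r,p+k-r)$ the right-hand side becomes
\[ \rank^{2p+k-r,p+k-r}(\Inv_k) = \prt{2p+k-r}{k}{k-r}. \]

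Next I would invoke Lemma~\ref{le:part-duality}, namely $\prt{n}{k}{j}=\prt{n+(k-2j)}{k}{k-j}$, with the substitution $n=2p+r$ and $j=r$. This yields
\[ \prt{2p+r}{k}{r} = \prt{(2p+r)+(k-2r)}{k}{k-r} = \prt{2p+k-r}{k}{k-r}, \]
which is exactly the identity needed to match the two sides computed above.

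There is no real obstacle here: the corollary is essentially a bookkeeping consequence of the two preceding results, once one confirms that the bidegree-to-partition dictionary of Proposition~\ref{pr:rank-Inv} lines up with the index shift in Lemma~\ref{le:part-duality}. The only thing to be careful about is keeping the substitutions $n=2p+r$, $j=r$ straight so that the shift $k-2j$ in the lemma produces precisely the topological-degree shift $(2p+k-r)-(2p+r)$ and the weight shift $(p+k-r)-(p+r)$ appearing on the right. Verifying this matching takes one line each, after which the corollary is immediate.
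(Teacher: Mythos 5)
Your proof is correct and is exactly the argument the paper intends: the paper's own proof simply states that the corollary is immediate from Proposition~\ref{pr:rank-Inv} and Lemma~\ref{le:part-duality}, and your substitutions $n=2p+r$, $j=r$ carry that out explicitly and accurately.
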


\begin{proof}
This is immediate from Proposition~\ref{pr:rank-Inv} and Lemma~\ref{le:part-duality}.
\end{proof}

The numbers in the rank chart for $\Inv_k$ organize themselves
naturally into lines of slope $\frac{1}{2}$.  To explain this (and
because it will be needed later) we introduce the following
terminology.  A \dfn{successor} of a partition $\alpha$ is any
partition obtained by adding $2$ to exactly one of the numbers in $\alpha$.
For example, $011$ has two successors: $013$ and $112$.  If a
partition $\beta$ is obtained from $\alpha$ by a sequence of
successors, we say that $\beta$ is a \dfn{descendent} of $\alpha$.
Finally, a partition $\alpha$ will be called \dfn{minimal} if it is
not a successor of any other partition.

For the set of all partitions consisting of $k$ nonnegative numbers, the
following facts are immediate:
\begin{enumerate}[(1)]
\item There are exactly $k+1$ minimal partitions: $00\ldots 0$, $00\ldots 01$,
$00\ldots 011$, 
$\ldots$, and $11\ldots 1$.  
\item Every partition $\alpha$ is a descendent of a unique minimal
partition, namely the one obtained by replacing each $\alpha_i$ with
either $0$ or $1$ depending on whether  $\alpha_i$ is even or odd.
\end{enumerate}

The partitions consisting of $k$ nonnegative numbers, with exactly $j$
odd numbers, form a tree under the successor operation: and the numbers
of such partitions forms 
the line of slope $\frac{1}{2}$ ascending from spot $(j,j)$ in our
rank charts.  

The following corollary records the evident bounds on the nonzero
numbers in our rank charts.  The proof is immediate from the things we
have already said, or it could be proven directly from Proposition~\ref{pr:rank-Inv}.

\begin{cor}
\label{co:bounds}
The bigraded rank function of $\Inv_k$ is nonzero only in the region
bounded by the three lines $y=x$, $y=\frac{1}{2}x$, 
and $y=\frac{1}{2}x+\frac{k}{2}$.  That is to
say, the elements of our free basis for $\Inv_k$ appear only in bidegrees
$(a,b)$ where $\frac{a}{2}\leq b\leq a$ if $a\leq k$, and $\frac{a}{2}
\leq b \leq \frac{1}{2}a+\frac{k}{2}$ if $a\geq k$.  
\end{cor}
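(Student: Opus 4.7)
The plan is to invoke Proposition~\ref{pr:rank-Inv}, which identifies $\rank^{p,q}(\Inv_k)$ with the combinatorial quantity $\prt{p}{k}{2q-p}$ counting partitions of $p$ into $k$ nonnegative parts with exactly $2q-p$ odd parts. The corollary then reduces to three elementary necessary conditions for such a partition to exist, each of which yields one of the three lines bounding the nonzero region.

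First I would identify the three constraints. For a partition $u_1,\ldots,u_k$ of $p$ into $k$ nonnegative integers with exactly $j=2q-p$ odd parts:
\begin{enumerate}[(i)]
\item One needs $j\geq 0$, giving $q\geq \tfrac{1}{2}p$ (the line $y=\tfrac{1}{2}x$).
\item One needs $j\leq k$, giving $q\leq \tfrac{1}{2}p+\tfrac{1}{2}k$ (the line $y=\tfrac{1}{2}x+\tfrac{k}{2}$).
\item Since each odd $u_i$ is at least $1$, the sum satisfies $p\geq j$, giving $q\leq p$ (the line $y=x$).
\end{enumerate}
Thus the rank function vanishes outside the region cut out by these three inequalities.

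Next I would compare the two upper bounds $b\leq a$ and $b\leq \tfrac{1}{2}a+\tfrac{k}{2}$ to see which is tighter in each range of $a$. The inequality $a\leq \tfrac{1}{2}a+\tfrac{k}{2}$ is equivalent to $a\leq k$, so for $a\leq k$ the bound $b\leq a$ dominates, while for $a\geq k$ the bound $b\leq \tfrac{1}{2}a+\tfrac{k}{2}$ dominates. Combined with the lower bound $b\geq \tfrac{a}{2}$ this gives exactly the description in the statement.

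There is no real obstacle here: the entire argument is a direct translation of Proposition~\ref{pr:rank-Inv} together with the observation that the three constraints on $(j,p,k)$ above are all obviously necessary. The only thing one should remark on is that the corollary claims only \emph{necessary} conditions, i.e.\ containment of the support in the stated region, so no existence/non-vanishing claim needs to be verified.
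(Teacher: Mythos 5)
Your proof is correct and follows exactly one of the two routes the paper itself indicates (the paper leaves the proof as "immediate" or "directly from Proposition~\ref{pr:rank-Inv}"): the three inequalities $0\leq 2q-p$, $2q-p\leq k$, and $2q-p\leq p$ are precisely the necessary conditions for $\prt{p}{k}{2q-p}$ to be nonzero, and your comparison of the two upper bounds correctly recovers the case split at $a=k$. Nothing is missing.
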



\section{Schubert cells and a spectral sequence}
\label{se:cells}

Given a sequence of integers $1\leq a_1 < a_2 <
\cdots < a_k$, define the associated Schubert cell in $\Gr_k(\cU)$ by
\[ \Omega_a = \{ V \in \Gr_k(\cU)\,|\, \dim(V\cap \cU^{a_i}) \geq i \}.
\]
Here $\cU^{n}\subseteq \cU$ is simply the subspace of vectors whose
$r$th coordinates all vanish for $r>n$, which we note is closed under
the $\Z/2$-action.    It will be convenient for us to regard the
$a$-sequence as giving a ``$*$-pattern'', in which one takes an
infinite sequence of empty boxes and places a single $*$ in each box
corresponding to an $a_i$.  If the boxes represent the standard basis
elements of $\cU$, then the $*$'s represent where the jumps in
dimension occur for subspaces $V$ lying in the interior of $\Omega_a$.
These $*$-patterns will be used several times in our discussion below.
 
It is somewhat more typical to use a different indexing convention
here.  Define $\sigma_i=a_i-i$, so that we have
$0\leq \sigma_1 \leq \sigma_2\leq \cdots \leq \sigma_k$.  Write
$\Omega(\sigma)$ for the same Schubert cell as $\Omega_a$, which has dimension equal
to $\sum_i \sigma_i$.  Define a \mdfn{$k$-Schubert symbol} to be
an increasing sequence $\sigma_1\leq \sigma_2\leq \cdots \leq \sigma_k$.
To get the associated $*$-pattern, skip over $\sigma_1$ empty boxes
and then place a $*$; then skip over $\sigma_2-\sigma_1$ empty boxes
and place another $*$; then skip over $\sigma_3-\sigma_2$ empty boxes,
and so forth.  For example, the Schubert symbol $[0235]$ corresponds
to the $*$-pattern $[* \bblank \bblank * \bblank * \bblank \bblank *]$, or
the $a$-sequence $(1,4,6,9)$.

Let $F_r\subseteq \Gr_k(\cU)$ be the union of all the Schubert cells
of dimension less than or equal to $r$.  This filtration gives rise to
a spectral sequence on cohomology in the usual way, where the
$E_1$-term is the direct sum $\oplus_{\sigma}
\tilde{H}^{*,*}(S^{a_\sigma,b_\sigma})$ where $\sigma$ ranges over all
$k$-Schubert symbols and $(a_\sigma,b_\sigma)$ is the bidegree of the
associated cell.  We will next describe an algorithm for producing
this bidegree.

Picture
the row of symbols $+-+-+-\cdots$ going on forever, with the initial
symbol regarded as the first (rather than the zeroth).  These symbols
represent the $\Z/2$-action on the standard basis elements of $\cU$.  For each 
$i$ in the range $1\leq i\leq k$, change the $a_i$th symbol to an
asterisk $*$.  Then for each $i$, define $u_i$ to be
\[ u_i=\begin{cases} 
\text{the total number of $+$ signs to the left
of the $i$th asterisk} & \text{if $a_i$ is even} \\
\text{the total number of $-$ signs to the left
of the $i$th asterisk} & \text{if $a_i$ is odd.}
\end{cases}
\]
Finally, define the \dfn{cell-weight} of the Schubert symbol to be
$\sum_i u_i$.  We claim that the open Schubert cell corresponding to
$\sigma$ is isomorphic to $\R^{n,k}$ where $n=\sum \sigma_i$ and $k$
is the cell-weight of $\sigma$.

Let us say the above in a slightly different way.  We think in terms
of $*$-patterns, but where the boxes contain alternating $+$ and $-$
signs and the $*$'s eradicate whatever sign was in their box.  For the
topological dimension of a cell, we count the number of empty boxes to
the left of each $*$ and add these up.  For the weight we do a fancier
kind of counting: if the $*$ replaced a $+$ sign then we count the
number of $-$ signs to the left of it, whereas if it replaced a $-$
we count the number of $+$ signs to the left.  And again, we add
up our answers for each $*$ in the pattern to get the total weight.  
For example, consider the Schubert symbol $\sigma=[135]$ which has
topological dimension $9$.  The
corresponding $a$-sequence is $(2,5,8)$, and this gives the $*$-pattern
$+*+-*-+*+-+-\cdots$  So $u_1=1$, $u_2=1$, $u_3=3$, and therefore the
bidegree of $\Omega(\sigma)$ is $(9,5)$.

\begin{ex}
Consider the Grassmannian $\Gr_2(\cU^6)$.  There are
$\tbinom{6}{2}=15$ Schubert cells.  We list all the $*$-patterns and
the bidegrees of the associated cells:
\begin{align*}
&**+-+- \quad (0,0)  \qquad +**-+- \quad (2,1) \qquad +-*-*- \quad (5,3) \\
&*-*-+- \quad (1,1) \qquad +*+*+- \quad (3,3) \qquad +-*-+* \quad (6,3) \\
&*-+*+- \quad (2,1) \qquad +*+-*- \quad (4,2) \qquad +-+**- \quad (6,3) \\
&*-+-*- \quad (3,2) \qquad +*+-+* \quad (5,4) \qquad +-+*+* \quad (7,5) \\
&*-+-+* \quad (4,2) \qquad +-**+- \quad (4,2) \qquad +-+-** \quad (8,4) \\
\end{align*}
\end{ex}  

We need to justify our procedure for determining the weight of a
Schubert cell.  Given an $a$-sequence, points in the interior of the
associated Schubert cell $\Omega_a$ are in bijective correspondence
with 
matrices of a form such as
\[ \begin{bmatrix}
? & ? & 1 & 0 & 0 & 0 & 0 & 0\\
? & ? & 0 & ? & 1 & 0 & 0 & 0 \\
? & ? & 0 & ? & 0 & ? & ? & 1 
\end{bmatrix}.
\]
(The matrix given is for the case of $\Gr_3(\cU)$ and the $a$-sequence
$(3,5,8)$).  The matrix in question has $1$'s in the columns given by
the $a$-sequence, each $1$ is followed by only zeros in its row, and
each $1$ is the only nonzero entry in its column.  The set of such
matrices is a Euclidean space of dimension equal to the number of
``?'' symbols.  Such a matrix determines a point in $\Gr_k(\cU)$ by
taking the span of its rows, and any $k$-plane in the interior of
$\Omega_a$ has a unique basis of the above form.  This is all standard
from non-equivariant Schubert calculus.  In the equivariant case, we
have a $\Z/2$-action on the set of such matrices induced by the
$\Z/2$-action on $\cU$.  In our above example, the action is
\[
\begin{bmatrix}
b & c & 1 & 0 & 0 & 0 & 0 & 0\\
d & e & 0 & f & 1 & 0 & 0 & 0 \\
g & h & 0 & i & 0 & j & k & 1 
\end{bmatrix}\mapsto
\begin{bmatrix}
b & -c & 1 & 0 & 0 & 0 & 0 & 0\\
d & -e & 0 & -f & 1 & 0 & 0 & 0 \\
g & -h & 0 & -i & 0 & -j & k & -1 
\end{bmatrix}.
\]
Notice that the matrix on the right is not in our standard form.  To
convert it to standard form we multiply the third row by $-1$ to get
\[
\begin{bmatrix}
b & c & 1 & 0 & 0 & 0 & 0 & 0\\
d & e & 0 & f & 1 & 0 & 0 & 0 \\
g & h & 0 & i & 0 & j & k & 1 
\end{bmatrix}\mapsto
\begin{bmatrix}
b & -c & 1 & 0 & 0 & 0 & 0 & 0\\
d & -e & 0 & -f & 1 & 0 & 0 & 0 \\
-g & h & 0 & i & 0 & j & -k & 1 
\end{bmatrix}.
\]
So as a $\Z/2$-representation we have $\R^{10}$ with five sign changes,
and this is $\R^{10,5}$.  It is now easy to go from this overall
picture to the specific formula for the cell-weight that was given
above.

\vspace{0.1in}

We now know how to compute the bigraded Schubert cell decomposition
for any Grassmannian.  It is useful to look at a specific example, so
here is the Schubert cell picture for $\Gr_5(\cU)$.  Each box gives
the number of Schubert cells of the given bidegree.  

\begin{picture}(300,200)(-50,0)
\multiput(0,0)(0,10){20}{\line(1,0){220}}
\multiput(0,0)(10,0){23}{\line(0,1){190}}

\put(2.5,2.5){$\scriptstyle{1}$}
\put(22.5,12.5){$\scriptstyle{2}$}
\put(42.5,22.5){$\scriptstyle{5}$}
\put(62.5,32.5){$\scriptstyle{9}$}
\put(81.5,42.5){$\scriptstyle{16}$}
\put(101.5,52.5){$\scriptstyle{25}$}
\put(121.5,62.5){$\scriptstyle{39}$}
\put(141.5,72.5){$\scriptstyle{56}$}
\put(161.5,82.5){$\scriptstyle{80}$}
\put(180.5,92.5){$\scriptscriptstyle{109}$}
\put(200.5,102.5){$\scriptscriptstyle{147}$}
\put(12.5,12.5){$\scriptstyle{1}$}
\put(32.5,22.5){$\scriptstyle{2}$}
\put(52.5,32.5){$\scriptstyle{5}$}
\put(72.5,42.5){$\scriptstyle{9}$}
\put(91.5,52.5){$\scriptstyle{16}$}
\put(111.5,62.5){$\scriptstyle{25}$}
\put(131.5,72.5){$\scriptstyle{39}$}
\put(151.5,82.5){$\scriptstyle{56}$}
\put(171.5,92.5){$\scriptstyle{80}$}
\put(190.5,102.5){$\scriptscriptstyle{109}$}
\put(210.5,112.5){$\scriptscriptstyle{147}$}
\put(32.5,32.5){$\scriptstyle{1}$}
\put(52.5,42.5){$\scriptstyle{2}$}
\put(72.5,52.5){$\scriptstyle{4}$}
\put(92.5,62.5){$\scriptstyle{7}$}
\put(111.5,72.5){$\scriptstyle{12}$}
\put(131.5,82.5){$\scriptstyle{18}$}
\put(151.5,92.5){$\scriptstyle{27}$}
\put(171.5,102.5){$\scriptstyle{38}$}
\put(191.5,112.5){$\scriptstyle{53}$}
\put(210.5,122.5){$\scriptscriptstyle{71}$}
\put(62.5,62.5){$\scriptstyle{1}$}
\put(82.5,72.5){$\scriptstyle{2}$}
\put(102.5,82.5){$\scriptstyle{4}$}
\put(122.5,92.5){$\scriptstyle{7}$}
\put(141.5,102.5){$\scriptstyle{12}$}
\put(161.5,112.5){$\scriptstyle{18}$}
\put(181.5,122.5){$\scriptstyle{27}$}
\put(201.5,132.5){$\scriptstyle{38}$}
\put(102.5,102.5){$\scriptstyle{1}$}
\put(122.5,112.5){$\scriptstyle{1}$}
\put(142.5,122.5){$\scriptstyle{2}$}
\put(162.5,132.5){$\scriptstyle{3}$}
\put(182.5,142.5){$\scriptstyle{5}$}
\put(202.5,152.5){$\scriptstyle{7}$}
\put(152.5,152.5){$\scriptstyle{1}$}
\put(172.5,162.5){$\scriptstyle{1}$}
\put(192.5,172.5){$\scriptstyle{2}$}
\put(212.5,182.5){$\scriptstyle{3}$}
\put(225,-2){$p$}
\put(-7,185){$q$}
\put(0,0){\circle*{4}}
\end{picture} 

\vspace{0.1in}

Note that the numbers appearing along lines of slope $\frac{1}{2}$
are the same as the numbers we saw in the rank chart for $\Inv_5$,
except that the lines are arranged differently in the plane.  We will need
a precise statement:

\begin{prop}
\label{pr:cell-bound}
Let $X$ be the $E_1$-term of the cellular spectral sequence for
$\Gr_k(\cU)$ based on the Schubert cell filtration.    Then the
nonzero entries in the rank chart for $X$ are bordered by the lines
$y=x$, $y=\frac{1}{2}x$, and $y=\frac{1}{2} (x+\binom{k+1}{2})$.  

Moreover, for any $j,r$ one has $\rank^{j+2r,j+r}X=0$ unless
$j=\binom{i}{2}$ for some $i$ in the range $1\leq i \leq k+1$.
And finally, if $j=\binom{i}{2}$ then
\[ \rank^{j+2r,j+r}X = \partition_{2r+\gamma_i,\leq k}[\gamma_i]=
\rank^{\gamma_i+2r,\gamma_i+r}(\Inv_k)
\]
where $\gamma$ is the function defined by
\[ \gamma_i=\begin{cases}
\tfrac{k+i}{2} & \text{if $k-i$ is even} \\
\tfrac{k+1-i}{2} & \text{if $k-i$ is odd}.
\end{cases}
\]
\end{prop}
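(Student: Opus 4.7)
The plan is to reduce everything to a combinatorial count. Since the $E_1$-term is a direct sum $\bigoplus_\sigma \tilde{H}^{*,*}(S^{a_\sigma,b_\sigma})$, each summand being a free $\M_2$-module of rank one with its generator in bidegree $(a_\sigma,b_\sigma)$, $\rank^{p,q} X$ equals the number of $k$-Schubert symbols whose cell has bidegree $(p,q)$. I encode each $*$-pattern by $\epsilon_i \in \{0,1\}$ indicating whether $a_i$ is even, let $e_i, o_i$ be the running counts of previous $*$'s at even and odd positions (so $e_i + o_i = i-1$), and set $S_i = e_i - o_i$, $s_i = 2\epsilon_i - 1$. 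A direct case-check from $\sigma_i = a_i - i$ and the cell-weight recipe gives
\[
2u_i - \sigma_i \;=\; s_i S_i + \epsilon_i.
\]
Telescoping via $S_{i+1} = S_i + s_i$ and using $2\sum_{j<i} s_j s_i = T^2 - k$ for $T = \sum s_i = 2e - k$ collapses the total to $2q - p = T(T+1)/2 = \binom{T+1}{2}$.

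As $e$ ranges over $\{0,1,\ldots,k\}$, the symmetry $\binom{m}{2} = \binom{1-m}{2}$ makes $\binom{T+1}{2}$ hit exactly $\{\binom{i}{2} : 1 \leq i \leq k+1\}$, which proves the sparsity claim. For fixed $j = \binom{i}{2}$, solving $2e - k + 1 \in \{i, 1-i\}$ subject to integrality picks out a unique $e$: when $k-i$ is even, $e = (k-i)/2$; when $k-i$ is odd, $e = (k+i-1)/2$. In both cases $\gamma_i = k - e$ agrees with the piecewise formula in the statement, with $o := k - e = \gamma_i$.

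To count cells on a fixed line, I list the odd-positioned asterisks as $2p_1+1 < \cdots < 2p_o+1$ and the even-positioned ones as $2q_1 < \cdots < 2q_e$, and then set $P'_\ell = p_\ell - (\ell-1)$ and $Q'_\ell = q_\ell - \ell$, obtaining partitions with at most $o$ and $e$ parts respectively. Rewriting $p = \sum a_i - \binom{k+1}{2}$ in these variables yields $p = 2(|P'| + |Q'|) + \binom{o-e}{2}$, and $\binom{o-e}{2} = \binom{e-o+1}{2} = j$, so $p = j + 2r$ forces $|P'| + |Q'| = r$. Hence the cell count on this line is the convolution
\[
\sum_{s+t=r} \bigl(\#\text{partitions of } s \text{ into} \leq o \text{ parts}\bigr)\cdot\bigl(\#\text{partitions of } t \text{ into} \leq e \text{ parts}\bigr).
\]
On the invariant side, Proposition~\ref{pr:rank-Inv} gives $\rank^{\gamma_i + 2r,\gamma_i + r}(\Inv_k) = \partition_{2r + \gamma_i, \leq k}[\gamma_i]$; splitting such a partition into its $o = \gamma_i$ odd parts (each $2x+1$) and up to $e$ nonzero even parts (each $2y$) produces exactly the same convolution.

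The three bounding lines drop out along the way: $q \geq p/2$ from $2q - p \geq 0$; $q \leq (p + \binom{k+1}{2})/2$ from the maximum value $\binom{k+1}{2}$ of $2q - p$; and $q \leq p$ because $q = |P'| + |Q'| + j \leq 2(|P'| + |Q'|) + j = p$. The hard part is the opening identity $2u_i - \sigma_i = s_iS_i + \epsilon_i$, whose proof requires careful case-splitting on the parity of $a_i$ and bookkeeping of unreplaced signs; once that is in hand the telescoping and the remaining combinatorial matching are mechanical.
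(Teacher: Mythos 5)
Your proof is correct; I checked the key identity $2u_i-\sigma_i=s_iS_i+\epsilon_i$ in both parity cases (for $a_i$ even one gets $i-2o_i=S_i+1$, for $a_i$ odd one gets $i-1-2e_i=-S_i$), and the telescoping, the resolution of the unique $e$ for each $i$, and the convolution match on both sides all go through. The combinatorial heart of your argument --- pairing the odd-positioned asterisks with the odd parts of a partition and the even-positioned asterisks with the even parts --- is the same parity-splitting bijection that the paper uses, but the architecture around it is genuinely different. The paper never writes a closed formula for the weight of a cell: it introduces a successor relation on $*$-patterns (move an asterisk two boxes to the right, shifting the bidegree by $(2,1)$), identifies the $k+1$ minimal patterns and computes that their cells sit at $\bigl(\binom{i}{2},\binom{i}{2}\bigr)$, proves that the parity-splitting bijection intertwines the successor relations on patterns and on partitions, and then counts $r$th successors in each descendant tree. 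Your route instead extracts the exact identity $2q-p=\binom{2e-k+1}{2}$ by a direct telescoping computation, which delivers the sparsity statement and all three bounding lines in one stroke, and then counts the cells on a fixed line as a convolution $\sum_{s+t=r}p_{\leq o}(s)\,p_{\leq e}(t)$ that visibly agrees with the same convolution read off from $\partition_{2r+\gamma_i,\leq k}[\gamma_i]$. What your version buys is an explicit weight formula for every individual Schubert cell (the weight depends only on how many asterisks occupy even positions), which is sharper than what the paper records and makes the ``awkward rearrangement of the rays'' transparent; what the paper's successor formalism buys is a reusable language of minimal elements and descendant trees that it also deploys for the rank chart of $\Inv_k$ in Section~\ref{se:add-basis} and in the rays argument of Lemma~\ref{le:main}(b). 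The one step you flag as requiring care --- the case-split proving $2u_i-\sigma_i=s_iS_i+\epsilon_i$ --- is indeed the only place where the sign-counting conventions must be handled precisely (in particular, that $u_i$ counts only the signs in boxes \emph{not} occupied by earlier asterisks), and your bookkeeping via $e_i$ and $o_i$ handles exactly that.
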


The mathematical phrasing of the above proposition is somewhat awkward, but it
says something very concrete.  Namely, the nonzero entries in the rank
chart for $X$ 
are divided into rays of slope $\frac{1}{2}$ emanating from the points
$\bigl (\binom{i}{2},\binom{i}{2}\bigr )$ for $1\leq i \leq k+1$.  
Starting from $\bigl (\binom{k+1}{2},\binom{k+1}{2}\bigr )$ and working towards
the origin along the $y=x$ line, mentally label each vertex with the numbers in
the sequence
\[ 0,\ k,\ 1,\ k-1, \ 2,\ k-2,\ 3,\ k-3,\ \ldots \]
These are the numbers $\gamma_{k+1}, \gamma_{k},\ldots$.   Then in $\rank(X)$, the
 $r$th term from $\bigl (\binom{i}{2},\binom{i}{2}\bigr )$ along the ray of
slope $\frac{1}{2}$ is equal to  $\partition_{2r+\gamma_i,\leq
k}[\gamma_i]$.

In order to prove Proposition~\ref{pr:cell-bound} we need to introduce
some language for
bookkeeping.  
Define a \dfn{successor} of a $*$-pattern to be a pattern made by
moving one of the $*$'s two spots to the right (note that one can only
do this if the new spot for the $*$ started out empty).  In terms of
$a$-sequences, a successor is an $a$-sequence obtained by adding $2$
to one of the $a_i$'s.  For example, the $a$-sequence $123$ has
exactly two successors, namely $125$ and $145$.  A $*$-pattern (or
$a$-sequence) is said to be
\dfn{minimal} if it is not the successor of another pattern (or
sequence); said differently, a $*$-pattern is minimal if one cannot
move any $*$ two places to the left.    
The sequences $123$ and $124$ are both minimal, but $125$ is not; these correspond to the
$*$-patterns $[***]$, $[**+*]$, and $[**+-*]$.  

Observe that taking the successors of a $*$-pattern increases the
bidegree of the associated Schubert cell by $(2,1)$.  This is easy to
explain in terms of the following picture, showing an arbitrary $*$-pattern and a
successor obtained by moving one of the $*$'s:  

\vspace{0.1in}

\[ \xymatrixcolsep{1pc}\xymatrix{
\cdots & \square & {*}  \ar@/^4ex/[rr]& \square & \square & \square & {*}
& {*} & \square & \cdots
}
\]

\vspace{0.1in}

\noindent
The count of empty boxes to the left of each $*$ is the same for the
two patterns, except for the $*$ that got moved: and for that $*$ the
count has increased by $2$.  Likewise, the number of $+/-$ signs in
the empty boxes stays the same for each $*$ in the two patterns,
except again for the $*$ that got moved: and for that $*$ the number
of $+$ and $-$ signs to the left of it each got increased by $1$.  

The fact that the successor relation increases the bidegree by $(2,1)$
explains why our Schubert cell chart breaks up into rays of slope
$\frac{1}{2}$.  The number of such rays will be governed by the number
of minimal $*$-patterns, so we investigate this next.

It is clear that for a $*$-pattern to be minimal it must be true that any two
successive $*$'s have at most one empty space between them.  Moreover,
as soon as one has an empty space in the $*$-pattern then all
successive $*$'s must be separated by one empty space.  So for
patterns with $k$ asterisks, there are exactly $k+1$ minimal patterns;
they are completely described by saying which $*$ has the first blank
space after it (the count is $k+1$ because the first blank might
appear after the {\it zeroth\/} star, which doesn't actually exist). 
One thing that is easy to verify about these minimal patterns is that
the corresponding Schubert cells each have bidegree $(p,p)$, for some
values of $p$; that is, the topological dimension and weight coincide.  
Recall that computing both the topological dimension and the weight
from the $*$-patterns amounts to counting empty boxes to the left of
each $*$, with the weight computation involving some restrictions on
which boxes get counted.  For the minimal $*$-patterns, the placement
of the $*$'s results in these restrictions all being vacuous: that is,
all empty boxes are counted.  

The minimal $*$-patterns correspond to the following $a$-sequences:
\[ (1,2,3,\ldots,k),(1,2,\ldots,k-1,k+1),(1,2,\ldots,k-2,k,k+2),\ldots,(1,3,5,\ldots,2k-1)
\]
and $(2,4,6,\ldots,2k)$
(the first $k$ of these follow a common pattern, the final one does not).
The associated Schubert symbols are
\[ [00\ldots 0], \ [00\ldots 01], \ [00\ldots 12],\ \ldots,\  [012\ldots (k-1)],\quad\text{and}\quad
[123\ldots k].
\]
The topological dimensions are therefore $\binom{i}{2}$ for $1\leq
i\leq k+1$, so the minimal $*$-patterns correspond to
Schubert cells of bidegree $(\binom{i}{2},\binom{i}{2})$ for $i$ in
this range.

\begin{proof}[Proof of Proposition~\ref{pr:cell-bound}]
We have determined in the preceding discussion that the successor
relation breaks the Schubert-cell chart into $k+1$ rays of slope
$\frac{1}{2}$, each ray starting at a point
$(\binom{i}{2},\binom{i}{2})$ for $1\leq i\leq k+1$.  The starting
points are the minimal $*$-patterns determined above.  What remains to
be shown is that the number of cells counted along these rays matches
similar rays in the count of partitions we saw in our study of
$\rank(\Inv_k)$.  This is where the awkward rearrangement of the rays
must be accounted for.

We have the classical bijection between Schubert cells and partitions,
which associates to any $*$-pattern the corresponding Schubert symbol.
For the rest of this proof we completely discard this bijection, and
instead use a {\it different\/} bijection, to be described next.  This
is the crux of the argument.  See Remark~\ref{re:strange-bijection}
below for more information about where this new bijection comes from.

Given a partition $\sigma$ with $k$ nonnegative parts, regard this
as two partitions $\sigma^{ev}$ and $\sigma^{odd}$ by simply
separating the even and odd numbers.  For example, if $\sigma=[00123]$
then $\sigma^{ev}=[002]$ and $\sigma^{odd}=[13]$.  
Note that in both $\sigma^{ev}$ and $\sigma^{odd}$ the difference of
consecutive pieces (when ordered from least to greatest) will always
be even.  

Consider a string of empty boxes labelled $1,2,3,\ldots$  
Take $\sigma^{ev}$ and convert this
to a $*$-pattern in what is essentially the usual way, but placing the
$*$'s only in the {\it even\/} boxes of the pattern.  
If
$\sigma^{ev}=[u_1,\ldots,u_r]$ then skip over $\frac{u_1}{2}$ even
boxes and place a $*$, then skip over $\frac{u_2-u_1}{2}$ even boxes
and place a $*$, and so on.  
Likewise,
convert $\sigma^{odd}$ to a $*$-pattern in the usual way but placing the
$*$'s only in the {\it odd boxes}.    
If
$\sigma^{odd}=[v_1,\ldots,v_r]$ then skip over $\frac{v_1-1}{2}$ odd
boxes and place a $*$, then skip over $\frac{v_2-v_1}{2}$ odd boxes
and place a $*$, and so on.  
This awkward procedure is best demonstrated by an
example, so return to $\sigma=[00123]$.  Then $\sigma^{ev}=[002]$, which
corresponds to the $*$-pattern $[\,\bblank * \bblank * \bblank \bblank \bblank *]$, and $\sigma^{odd}=[13]$
which corresponds to the $*$-pattern $[*\bblank \bblank \bblank *]$.  So the combined
pattern is $[** \bblank ** \bblank \bblank *]$.

We have given a function from partitions with $k$ pieces to
$*$-patterns with $k$ asterisks.  It is easy to see that this is a
bijection; an example of the inverse should suffice.  For the
$*$-pattern
\[ [* \bblank \bblank * \bblank * * * \bblank \bblank \bblank  * \bblank * \bblank *]
\]
the only odd boxes occupied are $1$ and $7$.  The associated
$\sigma^{odd}$ is $[15]$, because $\frac{5-1}{2}$ accounts for the two
skipped odd boxes between them.  The occupied even boxes are
$4,6,8,12,14,16$ and so $\sigma^{ev}=[222444]$.  The partition
associated to the above $*$-pattern is therefore $\sigma=[12224445]$.  

The point of this strange bijection is the following: it carries the
successor relation for $*$-patterns to the successor relation for
partitions (the latter defined back in Section~\ref{se:add-basis}).
This is easy to see, and we leave it to the reader---but also see
Remark~\ref{re:strange-bijection} below for a strong hint.

Using the above bijection, the minimal $*$-patterns of $k$ asterisks
correspond to the partitions 
$[00\ldots 0]$, $[00\ldots 01]$, $[00\ldots
001]$,$\ldots$, and $[11\ldots 1]$ (each with $k$ pieces).  
For example, if $k$ is even then the $*$-pattern with $a$-sequence $(1,2,3,\ldots,k)$
corresponds to the partition $[00\ldots 011\ldots 1]$ where there are
$\frac{k}{2}$ zeros and $\frac{k}{2}$ ones.  
It is somewhat better to order the partitions as 
\begin{myequation}
\label{eq:min-parts}
\qquad\qquad [00\ldots 0], \ [11\ldots 1], \ [00\ldots 01], \ [011\ldots 1], \ [00\ldots
001], \ [0011\ldots 1], \ \ldots 
\end{myequation}
because in this order the topological degrees of the associated
Schubert cells are 
\[ \tbinom{k+1}{2}, \tbinom{k}{2}, \tbinom{k-1}{2} ,\ldots,
\tbinom{2}{2}, \tbinom{1}{2}.
\]

For later use, let $\mu(i)$ be the number of $1$'s in the $i$th
partition from the list (\ref{eq:min-parts}), with $1\leq i \leq k+1$.  This
sequence is $\mu(1)=0$, $\mu(2)=k$, $\mu(3)=1$,
$\mu(4)=k-1$, and so forth.  Note that $\mu(i)=\gamma_{k+2-i}$, for
the $\gamma$-function defined in Proposition~\ref{pr:cell-bound}.

We can now wrap up the argument.  We have a bijection between
$*$-patterns and partitions, and it 
preserves the successor relation; it therefore also preserves the trees of
descendants.
In both settings (of $*$-patterns and partitions) one finds exactly
$k+1$ minimal elements---and therefore $k+1$ trees.  The minimal
partitions
are the ones in
which each piece is either $0$ or $1$.  For the $*$-patterns we have
computed that the minimal elements correspond to cells of bidegree
$\bigl (\binom{i}{2},\binom{i}{2}
\bigr )$ for
$1\leq i\leq k+1$, and that the partition associated to this $*$-pattern has
exactly $\gamma_i$ pieces equal to $1$ (and the rest zeros).  
We also found that an $r$th successor of such a $*$-pattern has bidegree
$\bigl (\binom{i}{2}+2r,\binom{i}{2}+r\bigr )$.

Let $\sigma_i$ be the partition associated to the minimal $*$-pattern of
bidegree $\bigl (\binom{i}{2},\binom{i}{2}\bigr )$.  Then
$\rank^{\tbinom{i}{2}+2r,\tbinom{i}{2}+r}(X)$ is the number of $r$th
successors of this $*$-pattern, which is equal to the number of $r$th
successors of the partition $\sigma_i$.  But 
$\sigma_i$ contains exactly $\gamma_i$ odd numbers, so the successors
of $\sigma_i$ are the partitions with exactly $\gamma_i$ odd numbers.  The
sum of the numbers in $\sigma_i$ is equal to $\gamma_i$ (note that
$\sigma_i$ only contains $0$s and $1$s), and so the sum of the numbers
in an $r$th successor of $\sigma_i$ will be $\gamma_i+2r$.  One sees
in this way that the number of $r$th successors of $\sigma_i$ is equal
to $\partition_{2r+\gamma_i,\leq k}[\gamma_i]$.  This
completes the proof.
\end{proof}

\begin{remark}
\label{re:strange-bijection}
Let us return to the classical bijection between $*$-patterns and
partitions, via Schubert symbols.  We claim that moving an asterisk
one spot to the right corresponds to adding $1$ to an element of the
associated Schubert symbol.  Suppose that the $a$-sequence for the
$*$-pattern is $\ldots x,y,z,\ldots$ and that we are promoting $y$ to
$y+1$.  Clearly this does not effect the beginning or the end of the
Schubert symbol.  If the original Schubert symbol was
$\ldots,u,v,w\ldots$ then $v-u=y-x-1$ and $w-v=z-y-1$.  
The new Schubert symbol will be $\ldots,u,v',w',\ldots$ where
$v'-u=(y+1)-x-1$ and $w'-v'=z-(y+1)-1$.  Clearly this requires $v'=v+1$
and $w'=w$.

It is not true, however, that moving an asterisk {\it two\/} spots to the right
corresponds to adding $2$ to an element of the associated Schubert symbol.  
The whole point of the strange bijection from the above proof was to
create a situation where this does work, and the previous paragraph
suggests why treating the even and odd spots separately accomplishes
this.
\end{remark}

\section{Differentials in the cellular spectral sequence}
\label{se:differentials}

The main goal of this section is Kronholm's theorem
(Theorem~\ref{th:Kronholm} below), which to date is
our best tool for governing what happens inside the cellular spectral sequence.

\medskip
To begin, 
we give two examples demonstrating the kinds of differentials that can
appear in the cellular spectral sequence for $\Gr_k(\cU)$.  The first example
consists of the row of three pictures below.  In the leftmost picture
we have a page of the spectral sequence in which there are two
copies of $\M_2$, with generators in bidegrees $(a,b)$ and
$(a+3,b+4)$.  (Note that one will typically have many more than two
copies of $\M_2$, but we focus on this simple situation for
pedagogical purposes).  There is a differential (shown) that must be a $d_3$,
since it maps a class from filtration degree $a$ into one from
filtration degree $a+3$.  The differential is only drawn on the
{\it generator\/} of the first copy of $\M_2$, but the differentials in the
cellular spectral sequence are $\M_2$-linear: so the one that is drawn
implies several other evident differentials.  
  
\begin{picture}(300,145)(5,0)
\multiput(0,0)(0,10){14}{\line(1,0){100}}
\multiput(0,0)(10,0){11}{\line(0,1){130}}
\put(102,-2){$p$}
\put(-7,125){$q$}
\put(0,0){\circle*{4}}
\put(34.5,54.5){\line(0,1){30}}
\put(34.5,54.5){\line(1,1){30}}
\put(34.5,34.5){\line(-1,-1){30}}
\put(34.5,34.5){\line(0,-1){30}}
\color[rgb]{1,0,0}
\put(64.5,94.5){\line(0,1){30}}
\put(64.5,94.5){\line(1,1){30}}
\put(64.5,74.5){\line(-1,-1){30}}
\put(64.5,74.5){\line(0,-1){30}}
\color[rgb]{0,0,1}
\put(34.5,54.5){\vector(1,0){8}}
\color[rgb]{0,0,0}
\multiput(122,0)(0,10){14}{\line(1,0){100}}
\multiput(122,0)(10,0){11}{\line(0,1){130}}
\put(224,-2){$p$}
\put(115,125){$q$}
\put(122,0){\circle*{4}}
\put(156.5,64.5){\line(0,1){30}}
\put(156.5,64.5){\line(1,1){25}}
\put(156.5,34.5){\line(-1,-1){30}}
\put(156.5,34.5){\line(0,-1){30}}
\put(186.5,84.5){\line(1,1){30}}
\put(186.5,84.5){\circle*{3}}
\color[rgb]{1,0,0}
\put(186.5,94.5){\line(0,1){30}}
\put(186.5,94.5){\line(1,1){30}}
\put(186.5,64.5){\line(-1,-1){25}}
\put(186.5,64.5){\line(0,-1){30}}
\put(156.5,44.5){\line(-1,-1){30}}
\put(156.5,44.5){\circle*{3}}
%
\color[rgb]{0,0,0}
\multiput(246,0)(0,10){14}{\line(1,0){100}}
\multiput(246,0)(10,0){11}{\line(0,1){130}}
\put(349,-2){$p$}
\put(240,125){$q$}
\put(247,0){\circle*{4}}
\put(280.5,64.5){\line(0,1){30}}
\put(280.5,64.5){\line(1,1){30}}
\put(280.5,44.5){\line(-1,-1){30}}
\put(280.5,44.5){\line(0,-1){30}}
\color[rgb]{1,0,0}
\put(310.5,84.5){\line(0,1){30}}
\put(310.5,84.5){\line(1,1){30}}
\put(310.5,64.5){\line(-1,-1){30}}
\put(310.5,64.5){\line(0,-1){30}}
\end{picture}

\vspace{0.2in}

In the middle panel we show the $E_{4}$-term of the spectral sequence,
obtained by taking homology with respect to our differential (warning:
not all
$\tau$-multiplications are shown here).  In our
simple example this is the same as $E_\infty$, but note that there are
extension problems in deducing the $\M_2$-structure.  By a theorem of
Kronholm \cite[Theorem 3.2]{K1} it turns out that the cohomology we
are converging to must be free over $\M_2$, and hence the extensions
are resolved as shown in the third panel.

Note the net effect as one passes from the first panel to the third:
the two copies of $\M_2$ remain, but their bidegrees have been
shifted.  The first copy has moved up one weight, and the second copy
has moved down one weight.  

Our next example shows a very similar phenomenon.  Interpreting the
pictures requires a little more imagination, though: remember that the
pictures only explicitly show the edges of the cones, whereas there are
an entire lattice of classes within the cones.  The leftmost chart
shows a situation where the differential takes the black generator to 
a class in the interior of the negative cone for the second copy of $\M_2$:

\begin{picture}(300,145)(5,0)
\multiput(0,0)(0,10){14}{\line(1,0){100}}
\multiput(0,0)(10,0){11}{\line(0,1){130}}
\put(102,-2){$p$}
\put(-7,125){$q$}
\put(0,0){\circle*{4}}
\put(34.5,44.5){\line(0,1){30}}
\put(34.5,44.5){\line(1,1){30}}
\put(34.5,24.5){\line(-1,-1){20}}
\put(34.5,24.5){\line(0,-1){20}}
\color[rgb]{1,0,0}
\put(64.5,104.5){\line(0,1){20}}
\put(64.5,104.5){\line(1,1){20}}
\put(64.5,84.5){\line(-1,-1){40}}
\put(64.5,84.5){\line(0,-1){40}}
\color[rgb]{0,0,1}
\put(34.5,44.5){\vector(1,0){8}}
\color[rgb]{0,0,0}
\multiput(122,0)(0,10){14}{\line(1,0){100}}
\multiput(122,0)(10,0){11}{\line(0,1){130}}
\put(224,-2){$p$}
\put(115,125){$q$}
\put(122,0){\circle*{4}}
\put(156.5,74.5){\line(0,1){30}}
\put(156.5,74.5){\line(1,1){25}}
\put(156.5,24.5){\line(-1,-1){20}}
\put(156.5,24.5){\line(0,-1){20}}
\multiput(186.5,94.5)(0,-10){3}{\circle*{3}}
\multiput(186.5,94.5)(10,10){4}{\line(0,-1){20}}
\multiput(186.5,94.5)(0,-10){3}{\line(1,1){35}}
\color[rgb]{1,0,0}
\put(186.5,104.5){\line(0,1){20}}
\put(186.5,104.5){\line(1,1){20}}
\put(186.5,54.5){\line(-1,-1){25}}
\put(186.5,54.5){\line(0,-1){25}}
\multiput(156.5,34.5)(0,10){3}{\circle*{3}}
\multiput(156.5,34.5)(0,10){3}{\line(-1,-1){34}}
\multiput(156.5,34.5)(-10,-10){4}{\line(0,1){20}}
%
\color[rgb]{0,0,0}
\multiput(246,0)(0,10){14}{\line(1,0){100}}
\multiput(246,0)(10,0){11}{\line(0,1){130}}
\put(349,-2){$p$}
\put(240,125){$q$}
\put(247,0){\circle*{4}}
\put(280.5,74.5){\line(0,1){35}}
\put(280.5,74.5){\line(1,1){35}}
\put(280.5,54.5){\line(-1,-1){35}}
\put(280.5,54.5){\line(0,-1){35}}
\color[rgb]{1,0,0}
\put(310.5,74.5){\line(0,1){35}}
\put(310.5,74.5){\line(1,1){35}}
\put(310.5,54.5){\line(-1,-1){35}}
\put(310.5,54.5){\line(0,-1){35}}
\end{picture}

\vspace{0.2in}

The leftmost chart is again an $E_3$-term, as the differential maps a
class in filtration $a$ to a class in filtration $a+3$.  The
$E_4$-page is shown in the second chart.  Kronholm's theorem tells us
that the cohomology our spectral sequence is converging to is free
over $\M_2$, and so the relevant extension problems work out to be as
shown in the third chart.

Once again, notice the difference between the first chart and the last
chart: the left copy of $\M_2$ has increased its weight by three,
whereas the right copy has decreased its weight by three.

Kronholm's theorem generalizes these two examples.  It says that 
the cohomology that the spectral sequence is converging to will be related
to the $E_1$-term by a sequence of ``trades'' in which two copies of
$\M_2$ shift up/down by the same number.  The following is a rigorous
statement along these lines, which covers all the applications we will
need in the present paper:

\begin{thm}[Kronholm]
\label{th:Kronholm}
Let $X$ denote the $E_1$-term of the cellular spectral sequence for $\Gr_k(\cU)$, and
let $Y=H^{*,*}(\Gr_k(\cU);\Z/2)$.  Both $X$ and $Y$ are free as
$\M_2$-modules, and
for each $p\in \Z$ one has
\[ \sum_{q} \rank^{p,q}(X) = \sum_q \rank^{p,q}(Y) 
\quad\text{and}\quad
 \sum_c \rank^{c,p+c}(X) = \sum_c \rank^{c,p+c}(Y).\]
\end{thm}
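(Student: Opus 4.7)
The plan is to deduce both identities directly from the trade structure spelled out in Kronholm's theorem \cite[Theorem 3.2]{K1}, by checking that individual trades preserve each of the two partial sums. Freeness of $X$ is immediate: $X$ is a direct sum, indexed by $k$-Schubert symbols, of the reduced cohomologies of the equivariant spheres $S^{a_\sigma,b_\sigma}$, each of which is a free $\M_2$-module on one generator. Freeness of $Y$ is the content of Kronholm's theorem.

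The key structural input, as described informally by the two preceding examples, is that $Y$ can be obtained from $X$ by a finite sequence of trades, each of which selects two free summands whose generators sit in bidegrees $(p_1,q_1)$ and $(p_2,q_2)$ (with $p_2>p_1$) and replaces them by generators in bidegrees $(p_1,q_1+s)$ and $(p_2,q_2-s)$ for some positive integer $s$. Kronholm's theorem further dictates that $s=(q_2-q_1)-(p_2-p_1)$, so that the two new bidegrees lie on the same pair of $(q-p)$-diagonals as the old, but with the two generators swapped between them.

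Given this, both identities follow at once by additivity over trades. For the first identity, the column sum $\sum_q \rank^{p,q}$ is altered by a trade only in the columns $p=p_1$ and $p=p_2$, and in each such column the trade removes one generator and adds another (at a different height), so the sum in every column is unchanged. For the second identity, fix $p$ and view $\sum_c \rank^{c,p+c}$ as the count of generators on the diagonal $q-c=p$; by the swap property above, the trade moves one generator from the diagonal $q_1-p_1$ to the diagonal $q_2-p_2$ and the other in the opposite direction, and leaves every other diagonal unaffected. Hence each trade preserves both families of partial sums.

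The main obstacle is the correct identification of the shift $s$ in terms of the pair $(p_1,q_1),(p_2,q_2)$, which is what ties the diagonal identity to the column identity. This identification is forced by the bidegrees of the differentials in the cellular spectral sequence and is precisely what Kronholm's theorem records; once it is accepted, the verification of the two sums is elementary bookkeeping.
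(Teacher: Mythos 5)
Your proposal is correct and takes essentially the same approach as the paper: the paper likewise treats the ``trade'' structure (two free summands shifting their weights up/down by the same amount $s$, so that their $q-p$ diagonals are swapped while their topological degrees are fixed) as the content of Kronholm's theorem, illustrates it with two worked examples, and explicitly defers the rigorous justification to the proof of \cite[Theorem 3.2]{K1} rather than reproving it. Your identification of $s=(q_2-q_1)-(p_2-p_1)$ and the resulting column/diagonal bookkeeping is exactly the argument the paper has in mind.
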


Note that the first equality from part (a) says that the number of basis elements in
topological dimension $p$ is the same in both $E_1$ and
$H^{*,*}(\Gr_k(\cU))$.  Relative to our rank charts, the second
equality from (a) 
says that the number of basis
elements along any given diagonal is the same in both $E_1$ and
$H^{*,*}(\Gr_k(\cU);\Z/2)$.  

\begin{remark}
In actuality, Theorem~\ref{th:Kronholm} as we have stated it is not quite found in 
\cite{K1}.  However, the result is implicit in the proof of
\cite[Theorem 3.2]{K1}. 
\end{remark}

\subsection{The forgetful map to singular cohomology}

There are natural maps $\Phi\colon H^{p,q}(X;\Z/2) \ra H^p(X;\Z/2)$
from equivariant cohomology to singular cohomology.  These fit
together to give a ring map $\Phi\colon H^{*,*}(X;\Z/2)\ra
H^*(X;\Z/2)$.  When $X=*$ this map is completely determined by the
formulas
$\Phi(\tau)=1$, $\Phi(\rho)=0$.
Consequently, $\Phi$ induces natural 
maps
\[ H^{*,*}(X;\Z/2)/(\rho) \ra H^*(X;\Z/2) \quad\text{and}\quad
H^{*,*}(X;\Z/2)[\tau^{-1}] \ra H^*(X;\Z/2).
\]

If $J$ is a free $\M_2$-module, then $J/\rho J$ is a free
$\M_2/\rho=\Z/2[\tau]$-module.  Note that $\tau$ has topological
dimension zero, and so $J/\rho J$ will decompose as a
$\Z/2[\tau]$-module into a direct sum over all topological dimensions:
\[ J/\rho J = \oplus_p \bigl [J/\rho J\bigr ]^{p,*}.
\]
Note that the submodule $[J/\rho J]^{p,*}$ is only ``influenced'' by basis
elements of $J$ in topological degree $p$: more precisely, any element
of $[J/\rho J]^{p,*}$ is the image under $J\ra J/\rho J$ of a $\Z/2[\tau]$-linear
combination of basis elements of $J$ in topological degree $p$.  
Also, if $J$ has a finite number of free generators in each topological
degree then the $\Z/2$-dimension of $[J/\rho J]^{p,q}$ is independent
of $q$ for $q\gg 0$ (once $q$ is larger than the weights of all the
generators in this topological degree).   

Let us apply these ideas when $J=H^{*,*}(\Gr_k(\cU);\Z/2)$.  Then
$[J/\rho J]^{p,*}$ is a free $\Z/2[\tau]$-module with a basis
corresponding to the equivariant Schubert cells of topological dimension
$p$.  While these cells likely have different weights, if we look in
$[J/\rho J]^{p,N}$ for $N$ large enough then we will see all of them
(more precisely, $\tau$-multiples of all of them).  The forgetful map
$\Phi$ will send these elements to the corresponding non-equivariant
Schubert classes in $H^p(\Gr_k(\cU);\Z/2)$ (recall that
$\Phi(\tau)=1$).  This shows that in large enough weights $N$ the map
$\Phi\colon [J/\rho J]^{p,N} \ra H^p(\Gr_k(\cU);\Z/2)$ is an
isomorphism.  This proves part (a) of the following:

\begin{prop} 
\label{pr:inject}  
For $p,q\in \Z$ 
consider the map
\[ \Phi_{p,q}\colon\Bigl
[H^{*,*}(\Gr_k(\cU);\Z/2)/(\rho)\Bigr ]^{p,q} \ra
H^p(\Gr_k(\cU);\Z/2).
\]
\begin{enumerate}[(a)]
\item Given $p$, there exists an $N\in \Z$ such that the map
$\Phi_{p,q}$ is an isomorphism for all $q\geq N$.
\item For any $p$ and $q$ the map $\Phi_{p,q}$ is an injection.
\end{enumerate}
\end{prop}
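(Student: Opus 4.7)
The plan is to leverage part (a) together with the $\tau$-action to do downward induction on the weight $q$. The key point is that $\Phi(\tau)=1$, so $\tau$-multiplication intertwines nontrivially with the forgetful map.

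First, I would observe that since $H^{*,*}(\Gr_k(\cU);\Z/2)$ is free over $\M_2$ (by Kronholm's theorem, Theorem~\ref{th:Kronholm}), say with basis $\{e_\alpha\}$, the quotient $J/\rho J$ is free over $\M_2/\rho \cong \Z/2[\tau]$ with the same basis. (Here I use the fact from Section~\ref{se:background} that everything in the negative cone of $\M_2$ is a multiple of $\rho$.) In particular, multiplication by $\tau$ is an injection on $J/\rho J$, giving an injective map
\[ \tau\colon [J/\rho J]^{p,q} \hookrightarrow [J/\rho J]^{p,q+1} \]
for every $p$ and $q$.

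Next, because $\Phi$ is a ring map with $\Phi(\tau)=1$, the square
\[
\xymatrix{
[J/\rho J]^{p,q} \ar[r]^-{\tau} \ar[d]_-{\Phi_{p,q}} & [J/\rho J]^{p,q+1} \ar[d]^-{\Phi_{p,q+1}} \\
H^p(\Gr_k(\cU);\Z/2) \ar@{=}[r] & H^p(\Gr_k(\cU);\Z/2)
}
\]
commutes. Consequently $\Phi_{p,q} = \Phi_{p,q+1}\circ \tau$, and since $\tau$ is injective on the top row, injectivity of $\Phi_{p,q+1}$ implies injectivity of $\Phi_{p,q}$.

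Now I apply downward induction on $q$. By part (a), there exists $N$ with $\Phi_{p,q}$ an isomorphism (in particular injective) for all $q\geq N$. The diagram above then propagates injectivity from weight $q+1$ to weight $q$ for every $q<N$, completing the proof. There is no real obstacle here; the whole argument is a formal consequence of Kronholm's freeness theorem, the identity $\Phi(\tau)=1$, and part (a). The only thing worth double-checking is that $J/\rho J$ is genuinely free over $\Z/2[\tau]$, which requires knowing that the $\rho$-quotient kills the entire negative cone of $\M_2$; this is precisely the content of the remark in Section~\ref{se:background}.
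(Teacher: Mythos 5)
Your argument is correct and is essentially the paper's own proof: the paper also uses freeness of $J$ over $\M_2$, the injectivity of $\tau$-multiplication on $J/\rho J$ (since $\M_2/(\rho)\cong\Z/2[\tau]$), and the commutativity forced by $\Phi(\tau)=1$, merely composing with $\tau^{N-q}$ in one step rather than running a downward induction. The two presentations are the same proof.
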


\begin{proof}
The proof of part (a) preceded the statement of the proposition.
For (b),
fix $p$ and $q$ and write $J=H^{*,*}(\Gr_k(\cU);\Z/2)$ for
simplicity.  
By (a) we know that for large enough $N$
the map $\Phi_{p,N}$
is
an isomorphism.  Now just consider the diagram
\[ \xymatrix{
[J/\rho J]^{p,q} \ar[r]^{\cdot \tau^{N-q}}\ar[dr]_{\Phi} & [J/\rho
J]^{p,N}\ar[d]_\iso^{\Phi} \\
& H^{p}(\Gr_k(\cU);\Z/2),
}
\]
which commutes because $\Phi(\tau)=1$.  Multiplication by $\tau$ is an
injection on $\M_2/(\rho)$, and hence also on $J/\rho J$.  So the
diagonal map in the diagram is also injective.
\end{proof}


\section{Proof of the main theorem}
\label{se:proof}

Throughout this section we let $X$ be the $E_1$-term of the cellular
spectral sequence for $\Gr_k(\cU)$, we let
$Y=H^{*,*}(\Gr_k(\cU);\Z/2)$, and we let $Z=\Inv_k$. 
It will be convenient to keep in mind the diagram
\[\xymatrix{
 X \ar@{~>}[r] & Y \ar[r] & Z,
}
\]
indicating that $Y$ maps to $Z$ and that there is a spectral sequence that
starts from $X$ and converges to $Y$.  Our aim is to prove
Theorem~\ref{th:main1}, stating that $Y\ra Z$ is an isomorphism.

Each of $X$, $Y$, and $Z$ is a free module over $\M_2$, and the proof will involve
a study of the bigraded rank functions for each.  The following lemma
collects the key results we will need:

\begin{lemma}
\label{le:main} \mbox{}\par
\begin{enumerate}[(a)]
\item For every $p\in \Z$, 
\[ \sum_q \rank^{p,q}(Y)=
\sum_q \rank^{p,q}(X)=\sum_q \rank^{p,q}(Z).
\]
\item For every $c\in \Z$, 
\[\sum_p \rank^{p,p+c}(Y)=
\sum_p \rank^{p,p+c}(X)=\sum_p \rank^{p,p+c}(Z).
\]
\item For every $p,q\in \Z$,
\[ \sum_{c\leq q} \rank^{p,c}(Y) \leq \sum_{c\leq q} \rank^{p,c}(Z)
\quad\text{and}\quad 
\sum_{c\geq q} \rank^{p,c}(Y) \geq \sum_{c\geq q} \rank^{p,c}(Z).
\]
\end{enumerate}
\end{lemma}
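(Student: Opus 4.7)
The plan is to treat the three parts in order, relying on earlier results for (a) and (b) and then carrying out a new injectivity argument for (c). For parts (a) and (b), the equalities $\sum_q\rank^{p,q}(X) = \sum_q\rank^{p,q}(Y)$ and $\sum_p\rank^{p,p+c}(X) = \sum_p\rank^{p,p+c}(Y)$ are precisely the two identities of Kronholm's Theorem~\ref{th:Kronholm}. For the $X=Z$ equality in (a), I will observe that $\sum_q\rank^{p,q}(X)$ counts Schubert cells of topological dimension $p$, which classically equals the number of partitions of $p$ into at most $k$ parts, while $\sum_q\rank^{p,q}(Z)=\sum_j\prt{p}{k}{j}$ by Proposition~\ref{pr:rank-Inv} sums to the same total. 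For the $X=Z$ equality in (b), Proposition~\ref{pr:cell-bound} identifies the rank chart of $X$ with that of $Z$ via a permutation of their $k+1$ rays of slope $\tfrac{1}{2}$; a diagonal $q=p+c$ meets each ray in at most one point (at $r=-c$ when $c\leq 0$; for $c>0$ both sums vanish by Corollary~\ref{co:bounds}), and summing matching contributions over the bijection $i\mapsto\gamma_i$ from $\{1,\ldots,k+1\}$ onto $\{0,1,\ldots,k\}$ gives equal totals on both sides.

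Part (c) is the substantial piece. The key claim I will prove is that the ring map $\eta^*$ induces an injection $Y/\rho\hookrightarrow Z/\rho$ in every bidegree. Since $Y$ and $Z$ are free $\M_2$-modules, the quotients $Y/\rho$ and $Z/\rho$ are free over $\Z/2[\tau]=\M_2/\rho$, and a direct generator count gives
\[
\dim_{\Z/2}[Y/\rho]^{p,q} \,=\, \sum_{c\leq q}\rank^{p,c}(Y),
\]
with the analogous formula for $Z$. Injectivity in bidegree $(p,q)$ then delivers the first inequality of (c); the second follows from (a) by complementation, since $Y$ and $Z$ share the common total $\sum_q\rank^{p,q}$, so the first inequality at $q-1$ yields the second at $q$.

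The injectivity itself I will establish by passing to $\tau$-inverted cohomology. Because $Y/\rho$ is a free (hence $\tau$-torsion-free) $\Z/2[\tau]$-module, the localization $Y/\rho\hookrightarrow Y[\tau^{-1}]/\rho$ is injective; it therefore suffices to prove that $Y[\tau^{-1}]/\rho \to Z[\tau^{-1}]/\rho$ is injective in each topological degree. For fixed $p$, both sides are free $\Z/2[\tau^{\pm 1}]$-modules of the same rank $r_p = \dim H^p(\Gr_k(\R^\infty);\Z/2)$: for $Y$ this uses Proposition~\ref{pr:inject}(a), while for $Z$ it follows from Proposition~\ref{pr:rank-Inv} and the classical Betti count. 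Quotienting further by $(\tau-1)$ identifies our map with the non-equivariant $\eta^*_{\mathrm{sing}}\colon H^*(\Gr_k(\R^\infty);\Z/2)\to [H^*(\RP^\infty;\Z/2)^{\otimes k}]^{\Sigma_k}$, which is a classical isomorphism. Representing our $\Z/2[\tau^{\pm 1}]$-linear map in topological degree $p$ as a square matrix of size $r_p$, its determinant takes the value $1$ at $\tau=1$, hence is a nonzero element of the integral domain $\Z/2[\tau^{\pm 1}]$, and the map is injective. The main obstacle is precisely this last step: verifying that both $\tau$-inverted modules have the same rank $r_p$ in each topological degree (which already invokes part (a)), and then packaging the classical isomorphism $\eta^*_{\mathrm{sing}}$ into a determinant argument over the PID $\Z/2[\tau^{\pm 1}]$.
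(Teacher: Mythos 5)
Your proposal is correct, and for parts (a) and (b) it is the paper's argument verbatim: Kronholm's Theorem~\ref{th:Kronholm} for the $X$--$Y$ equalities, the Schubert-cell/partition count for the $X$--$Z$ equality in (a), and the matching of the $k+1$ rays of slope $\tfrac12$ (Propositions~\ref{pr:cell-bound} and \ref{pr:rank-Inv}) for the $X$--$Z$ equality in (b). For part (c) the overall strategy is also the same --- reduce mod $\rho$, use the dimension count $\dim_{\Z/2}[J/\rho J]^{p,q}=\sum_{c\leq q}\rank^{p,c}(J)$, and derive the second inequality from the first via (a) --- but the injectivity of $Y/\rho Y\ra Z/\rho Z$ is established by a genuinely different mechanism. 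The paper factors $\Phi$ through $[Y/\rho Y]^{p,q}\ra H^p(\Gr_k(\cU);\Z/2)$, invokes the injectivity of that map (Proposition~\ref{pr:inject}(b)) together with the classical isomorphism on the bottom row, and reads off injectivity from a commutative square; notably it needs \emph{no} information about the map $[Z/\rho Z]^{p,q}\ra \bigl[H^*(\RP^\infty;\Z/2)^{\tens k}\bigr]^{\Sigma_k}$ and does not use part (a). Your route --- invert $\tau$, use that both sides are free of the same finite rank $r_p$ over the domain $\Z/2[\tau^{\pm 1}]$, and run a determinant argument anchored by the classical isomorphism at $\tau=1$ --- is valid, but it costs you two extra inputs: the rank equality from part (a), and the verification that $\Inv_k[\tau^{-1}]/(\rho,\tau-1)\ra \bigl[H^*(\RP^\infty;\Z/2)^{\tens k}\bigr]^{\Sigma_k}$ is an isomorphism (true, and easily checked on the standard orbit-sum basis of $\Inv_k$, but it should be said explicitly since without it the reduced matrix need not be square with determinant $1$). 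Both arguments ultimately rest on the same two facts --- $\tau$-torsion-freeness of $J/\rho J$ for $J$ free, and the non-equivariant computation of $H^*(\Gr_k(\R^\infty);\Z/2)$ --- so the difference is one of packaging rather than substance; the paper's version is slightly leaner, yours makes the role of the classical theorem more visibly a ``generic invertibility'' statement over $\Z/2[\tau^{\pm 1}]$.
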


We have written the equalities in the first two parts in the order
that they will be proven: $Y$ is related to $X$, and $X$ is related to
$Z$.  
Phrase in terms of our rank charts, the above results say:
\begin{enumerate}[(i)]
\item The sum of the numbers in any column is the same for $X$, $Y$, and
$Z$.
\item The sum of the numbers along any diagonal is the same for $X$,
$Y$, and $Z$.
\item If one fixes a particular box and adds together the numbers in all boxes
directly above it, the sum for $Y$ is always at least the sum for
$Z$.  (This is the second inequality in (c)).
\end{enumerate}
We defer the proof of the lemma for just a moment, in order to
highlight the structure of the main argument.  However, let us point
out that the left equalities in (a) and (b) are by Kronholm's
Theorem, and the second equalities come from our combinatorial
analyses of $\rank^{*,*}(X)$ and $\rank^{*,*}(Z)$.  In light of (a), the two
inequalities in part (c) are equivalent.  The proof of these
final inequalities uses some topology, namely the non-equivariant version of
Theorem~\ref{th:main1}.

Before proving the next result, we introduce a useful piece of notation.
If $M$ is a free $\M_2$-module, then
for each $c\in \Z$ let $d_c(M)$ denote the function $\Z\ra \Z$ given by
$p\mapsto \rank^{p,p-c}(M)$.  These are the entries in the rank chart of
$M$ along the diagonal line of slope $1$ passing through the point
$(0,-c)$.

\begin{prop}
\label{pr:Y->Z}
For all $p,q\in \Z$, $\rank^{p,q}(Y)=\rank^{p,q}(Z)$.  
\end{prop}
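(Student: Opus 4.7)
The plan is to combine the diagonal-sum identities from Lemma \ref{le:main}(b) with the upper-tail inequalities from Lemma \ref{le:main}(c) to force $\rank^{p,q}(Y)$ and $\rank^{p,q}(Z)$ to agree in every bidegree. Fix $d\in\Z$ and, using part (b) applied to each diagonal $d'\geq d$, one obtains
\[ \sum_{d'\geq d}\,\sum_p \rank^{p,\,p+d'}(Y) \;=\; \sum_{d'\geq d}\,\sum_p \rank^{p,\,p+d'}(Z). \]
All sums here are finite: each column of $Z$ has finite support by Corollary \ref{co:bounds}, and each column of $Y$ does too by Lemma \ref{le:main}(a). Swapping the order of summation and setting $q=p+d'$, the identity rearranges to
\[ \sum_p \Bigl[\,\sum_{q\geq p+d} \rank^{p,q}(Y)\,\Bigr] \;=\; \sum_p \Bigl[\,\sum_{q\geq p+d} \rank^{p,q}(Z)\,\Bigr]. \]

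The key step is now to apply Lemma \ref{le:main}(c) with cutoff weight equal to $p+d$: for every $p$, the bracketed summand for $Y$ is at least the bracketed summand for $Z$. Since the outer $p$-sums agree while every individual $p$-term satisfies $\geq$, each such $p$-term must in fact be an equality. Since $d$ and $p$ were arbitrary, writing $q_0:=p+d$ this gives
\[ \sum_{q\geq q_0} \rank^{p,q}(Y) \;=\; \sum_{q\geq q_0} \rank^{p,q}(Z) \quad\text{for all } p,q_0\in\Z. \]
Subtracting the identity for $q_0+1$ from the identity for $q_0$ then yields $\rank^{p,q_0}(Y)=\rank^{p,q_0}(Z)$, which is the desired equality.

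In short, once Lemma \ref{le:main} is in hand, the deduction is essentially a telescoping argument: part (b) pins down cumulative upper-tail sums in aggregate over each half-plane $\{q-p\geq d\}$, and part (c) then forces that pinning to hold column by column. I expect the real difficulty to lie not in this finite-difference step but in the proof of Lemma \ref{le:main} itself---in particular part (c), which unlike (a) and (b) should require genuine topological input, namely the non-equivariant calculation of $H^*(\Gr_k(\R^\infty);\Z/2)$ and its interaction with the forgetful map via the injectivity result of Proposition \ref{pr:inject}.
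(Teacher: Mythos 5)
Your argument is correct and is in essence the paper's own proof: both rest on the observation that the aggregate equalities of Lemma~\ref{le:main}(b), combined with the termwise tail inequalities of Lemma~\ref{le:main}(c), force equality in every bidegree. The paper packages this as a downward induction on the diagonals $q-p=c$ whereas you sum over the half-plane $\{q-p\geq d\}$ and exchange the order of summation---the same telescoping idea---though note that the finiteness of those half-plane sums really comes from the vanishing of all diagonals with $q>p$ (Corollary~\ref{co:bounds} for $Z$, then Lemma~\ref{le:main}(b) and nonnegativity for $Y$), not merely from the finiteness of individual columns.
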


\begin{proof}

We will prove the proposition by establishing that $d_c(Y)=d_c(Z)$ for
all $c\in \Z$.
First note that this is easy for $c<0$.  In this case we know by
direct computation that
$\rank^{p,p-c}(Z)=0$ for all $p\in \Z$ (Corollary~\ref{co:bounds}).  So $\sum_p
\rank^{p,p-c}(Z)=0$, which implies by
Lemma~\ref{le:main}(b) that $\sum_p \rank^{p,p-c}(Y)=0$.  Since the
ranks are all non-negative, this means $\rank^{p,p-c}(Y)=0$ for all
$p\in \Z$.  

Next we proceed by induction on $c$.  Assume $c\geq 0$ and that $d_n(Y)
=d_n(Z)$ for all $n< c$.  Let $p\in \Z$, and consider the inequality
\[ \sum_{q\geq p-c} \rank^{p,q}(Y) \geq \sum_{q\geq p-c}
\rank^{p,q}(Z) \]
from Lemma~\ref{le:main}(c).  By induction we know that
$\rank^{p,q}(Y)=\rank^{p,q}(Z)$ for $q>p-c$, and so we conclude that 
\begin{myequation}
\label{eq:eq1}
 \rank^{p,p-c}(Y) \geq \rank^{p,p-c}(Z).
\end{myequation}
This holds for all $p\in \Z$.  But we also know, by
Lemma~\ref{le:main}(b), that
\begin{myequation}
\label{eq:eq2} \sum_p \rank^{p,p-c}(Y) = \sum_p \rank^{p,p-c}(Z).
\end{myequation}
Equations (\ref{eq:eq1}) and (\ref{eq:eq2}) can both be true only if
 $\rank^{p,p-c}(Y)=\rank^{p,p-c}(Z)$
for all $p \in \Z$.  That is, $d_c(Y)=d_c(Z)$.  
\end{proof}

\begin{remark}
It is worth remarking that Proposition~\ref{pr:Y->Z} has solved one of our
main questions.  It completely identifies the weights of the free
generators for $H^{*,*}(\Gr_k(\cU);\Z/2)$ by showing that they agree
with the ranks of the generators for the combinatorially-computable ring of invariants
$\Inv_k$.  
\end{remark}

Next we give the

\begin{proof}[Proof of Lemma~\ref{le:main}]
The first equality in (a) is by Kronholm's Theorem
(Theorem~\ref{th:Kronholm}).  For the second equality observe that
$\sum_q
\rank^{p,q}(X)$ is just the number of classical Schubert
cells of dimension $p$ inside $\Gr_k(\R^\infty)$.  This is the same as
the number of partitions of $p$ into at most $k$ pieces, which is the
same as $\sum_q \partition_{p,\leq k}[q]$.  The latter equals $\sum_q
\rank^{p,q}(Z)$ by Proposition~\ref{pr:rank-Inv}.

For (b), the first equality is again by Kronholm's Theorem.  The
equality $\sum_p \rank^{p,p+c}(X)=\sum_p \rank^{p,p+c}(Z)$ follows
from the combinatorial identities in Proposition~\ref{pr:rank-Inv} and
Proposition~\ref{pr:cell-bound}; to see why, it is best to think
pictorially.
Proposition~\ref{pr:cell-bound} says that the rank chart for $X$
is concentrated along $k+1$ rays of slope $\frac{1}{2}$, emanating
from certain points on the $y=x$ line.  Proposition~\ref{pr:rank-Inv}
says that the rank chart of $Z$ also consists of $k+1$ rays of slope 
$\frac{1}{2}$---containing the same entries as the ones in $X$---but
which emanate from different points on the $y=x$ line (in other words,
the order of the rays in the two charts are both permuted and shifted
along the $y=x$ line).  From this it follows at once that the
diagonals of the two rank charts  contain the same entries,
only permuted.  In particular, the sum of the entries is the same in
the two situations.

For (c), it will suffice to prove that $\sum_{c\leq q} \rank^{p,c}(Y)
\leq \sum_{c\leq q} \rank^{p,c}(Z)$, since the second inequality
follows from this one together with part (a).    Consider the diagram
\[ \xymatrixcolsep{1pc}\xymatrix{
Y^{p,q} \ar[rr]\ar[dr]\ar[dd]_\Phi && Z^{p,q}\ar[dr]\ar[dd]^-<<<<<<\Phi \\
& [Y/\rho Y]^{p,q}\ar@{.>}[dl]\ar[rr] && [Z/\rho Z]^{p,q}\ar@{.>}[dl]\\ 
H^p(\Gr_k(\cU);\Z/2) \ar[rr]_\iso && \Bigl [ \bigl [H^*(\RP^\infty;\Z/2)^{\tens
k}\bigr ]^{\Sigma_k} \Bigr ]^{p}
}
\]
where the dotted arrows exist because $\rho$ is sent to zero by
$\Phi$.  The bottom horizontal map is an isomorphism by the classical
theory, and the map $[Y/\rho Y]^{p,q} \ra H^p(\Gr_k(\cU);\Z/2)$ is an
injection by Proposition~\ref{pr:inject}(b).  It follows that $[Y/\rho
Y]^{p,q} \ra [Z/\rho Z]^{p,q}$ is an injection.  However, it is easy
to see that if $J$ is a free $\M_2$-module then $\dim_{\Z/2} [J/\rho
J]^{p,q}=\sum_{c\leq q}
\rank^{p,c}(J)$.  Applying this to $Y$ and $Z$, we have completed the proof.
\end{proof}

At this point we have only proven that $Y$ and $Z$ are free
$\M_2$-modules with the same bigraded rank functions.  But we have a
specific map $Y\ra Z$, and our goal is to prove that it is an
isomorphism.  Since both $Y^{p,q}$ and $Z^{p,q}$ are
finite-dimensional over $\Z/2$ for every $p,q\in \Z$, it will be
sufficient to prove that $Y \ra Z$ is surjective.  We begin with the
following observation:

\begin{lemma}
\label{le:iso-mod-rho}
The map $Y/\rho Y \ra Z/\rho Z$ is an isomorphism.
\end{lemma}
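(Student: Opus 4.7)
The plan is to combine the rank equality from Proposition~\ref{pr:Y->Z} with the injectivity already established inside the proof of Lemma~\ref{le:main}(c). The idea is that we essentially know everything we need; it just needs to be assembled.

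First I would recall that for any free $\M_2$-module $J$ one has the basic identity
\[ \dim_{\Z/2} [J/\rho J]^{p,q} = \sum_{c\leq q}\rank^{p,c}(J), \]
which was used at the very end of the proof of Lemma~\ref{le:main}. In particular these bigraded dimensions depend only on the bigraded rank function of $J$. Applying this to $J=Y$ and $J=Z$, and invoking Proposition~\ref{pr:Y->Z} which gives $\rank^{p,q}(Y)=\rank^{p,q}(Z)$ for all $p,q$, it follows that $[Y/\rho Y]^{p,q}$ and $[Z/\rho Z]^{p,q}$ have the same finite dimension over $\Z/2$ in every bidegree.

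Next I would extract from the commutative diagram used in the proof of Lemma~\ref{le:main}(c) the fact that the induced map
\[ [Y/\rho Y]^{p,q} \lra [Z/\rho Z]^{p,q} \]
is an injection for every $p,q\in\Z$. (That diagram factored $Y\to Z$ through the singular cohomology of $\Gr_k(\cU)$ and of the symmetric invariants, and used the classical non-equivariant identification together with Proposition~\ref{pr:inject}(b) to deduce injectivity of the left-hand factor, which then forces injectivity of the map to $Z/\rho Z$.)

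Combining these two observations finishes the argument: an injection between finite-dimensional $\Z/2$-vector spaces of the same dimension must be an isomorphism, so $[Y/\rho Y]^{p,q}\to[Z/\rho Z]^{p,q}$ is an isomorphism for every $p,q$, i.e.\ $Y/\rho Y \to Z/\rho Z$ is an isomorphism of bigraded $\Z/2[\tau]$-modules. There is no real obstacle here; the lemma is essentially a bookkeeping corollary of work already done, which is precisely why it is being singled out as a preparatory step before lifting the isomorphism over $\rho$ to an isomorphism $Y\to Z$ (presumably via a Nakayama-style argument in the next step of the main proof).
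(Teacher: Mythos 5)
Your proposal is correct and is essentially identical to the paper's own proof: both combine the injectivity of $[Y/\rho Y]^{p,q}\ra [Z/\rho Z]^{p,q}$ established in the proof of Lemma~\ref{le:main}(c) with the dimension count $\dim_{\Z/2}[J/\rho J]^{p,q}=\sum_{c\leq q}\rank^{p,c}(J)$ and the rank equality of Proposition~\ref{pr:Y->Z}. Nothing is missing.
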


\begin{proof}
As in the proof of Lemma~\ref{le:main}(c), we know that $[Y/\rho Y]^{p,q}\ra
[Z/\rho Z]^{p,q}$ is an injection.  We also know that the $\Z/2$-dimensions of these
two spaces are $\sum_{c\leq q} \rank^{p,c}(Y)$ and $\sum_{c\leq q}
\rank^{p,c}(Z)$, which are equal by Proposition~\ref{pr:Y->Z}.  This
proves the lemma.
\end{proof}

The desired result will now follow from the purely algebraic lemma
below:

\begin{lemma}
\label{le:algebra}
Let $M$ and $N$ be free $\M_2$-modules, and let $f\colon M\ra N$ be a
map such that $M/\rho M \ra N/\rho N$ is an isomorphism.  Assume that
\begin{enumerate}[(i)]
\item $\rank^{p,q}(M)=\rank^{p,q}(N)$ for all $p,q\in \Z$.
\item $\dim_{\Z/2} M^{p,q}$ is finite for all $p,q\in \Z$.  
\item There exists an $r\in \Z$ such that $d_c(M)=d_c(N)=0$ for all
$c< r$.
\item There exists a number $u$ such that $\rank^{p,q}(M)=0$ for all
$p< u$.  
\end{enumerate}
Then $f$ is an isomorphism.
\end{lemma}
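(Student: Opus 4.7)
The plan is to exploit the decomposition $\M_2 = \M_2^+ \oplus \M_2^-$ of the coefficient ring into its positive cone $\M_2^+ = \Z/2[\tau,\rho]$ and negative cone $\M_2^-$.  For any free $\M_2$-module $P = \bigoplus_i \M_2 \cdot e_i$, set $P^+ = \bigoplus_i \M_2^+ e_i$ and $P^- = \bigoplus_i \M_2^- e_i$, so that $P = P^+ \oplus P^-$ as $\Z/2$-vector spaces.  Three elementary observations about $\M_2$ drive everything: $P^-$ is an $\M_2$-submodule because $\M_2^- \cdot \M_2^- = 0$; consequently $\M_2^-$ acts as zero on $P^-$; and $\rho$ acts surjectively on $\M_2^-$ (by direct inspection of the chart), so $P^-/\rho P^- = 0$ and hence $P/\rho P = P^+/\rho P^+$.

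Applying this to $f$: for $\mu \in \M_2^-$, the element $\mu$ kills the $N^-$-component of $f(e_i)$ while sending the $N^+$-component into $\M_2^- \cdot N^+ \subseteq N^-$; hence $f(M^-) \subseteq N^-$.  Thus $f$ induces a restriction $f^-\colon M^- \to N^-$ and a quotient map $f^+\colon M^+ \to N^+$ (together with a $\Z/2$-linear mixing term $f^\sharp\colon M^+ \to N^-$ that will not require separate analysis).  Using $P/\rho P = P^+/\rho P^+$, the hypothesis that $M/\rho M \to N/\rho N$ is an isomorphism translates into the statement that $f^+$ is an isomorphism modulo $\rho$.

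The central step is to prove $f^+\colon M^+ \to N^+$ is an isomorphism of free $\M_2^+$-modules.  The generators of $M^+$ and $N^+$ sit in the same bidegrees as those of $M$ and $N$, so by (i) and (iv) each has topological degree $\geq u$.  Because $M^{+,p,q}$ and $N^{+,p,q}$ come only from positive cones of generators, this forces $M^{+,p,q} = N^{+,p,q} = 0$ whenever $p < u$.  A bounded-below graded Nakayama argument now applies: given $n \in N^{+,p,q}$, iterated lifting through $\bar f^+$ yields $m_0, m_1, \ldots, m_k \in M^+$ with
\[ n - f^+\bigl( m_0 + \rho m_1 + \cdots + \rho^k m_k \bigr) = \rho^{k+1} n_{k+1}, \quad n_{k+1} \in N^{+,p-k-1,q-k-1}. \]
Choosing $k \geq p - u$ forces $n_{k+1} = 0$, establishing surjectivity of $f^+$; injectivity follows from (i) and (ii), which give $\dim_{\Z/2} M^{+,p,q} = \dim_{\Z/2} N^{+,p,q} < \infty$.

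To finish, write $f^+(e_i) = \sum_j a_{ij} e'_j$ with $a_{ij} \in \M_2^+$; then for $\mu \in \M_2^-$ one has $f^-(\mu e_i) = \mu f(e_i) = \mu f^+(e_i) = \sum_j (\mu a_{ij}) e'_j$, since the $N^-$-summand of $f(e_i)$ is annihilated by $\mu$.  Thus $f^-$ is governed by the very matrix $(a_{ij})$ describing $f^+$, and inverting that matrix over $\M_2^+$ yields an inverse for $f^-$.  The block-triangular structure $f = \bigl(\begin{smallmatrix} f^+ & 0 \\ f^\sharp & f^- \end{smallmatrix}\bigr)$ with both diagonal blocks invertible then delivers an $\M_2$-linear inverse to $f$.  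The main obstacle is the Nakayama step: once the negative cones have been cleanly separated off (they contribute nothing mod $\rho$), what remains is bounded-below graded Nakayama over the polynomial ring $\Z/2[\tau,\rho]$, which is the engine of the entire proof.
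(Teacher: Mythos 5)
Your proof is correct, but it is organized quite differently from the paper's. The paper filters $N$ by diagonals ($N_s$ = span of the basis elements $e_\alpha$ with $p_\alpha-q_\alpha\leq s$), starts an induction using hypothesis (iii), and at each stage combines the identity $N=(\im f)+\rho^m N$ (coming from the mod-$\rho$ isomorphism) with the observation that condition (iv) forces $[\rho^m N]^{p,q}$ into strictly higher diagonals for $m\gg 0$ --- the point being that $\M_2$ has no elements on its main diagonal in negative topological degrees. You instead split $\M_2=\M_2^+\oplus\M_2^-$ into its positive and negative cones, verify the three structural facts ($\M_2^-$ is an ideal with square zero, is $\rho$-divisible, and $\M_2^+$ is concentrated in topological degrees $\geq 0$), run a bounded-below graded Nakayama argument over $\Z/2[\tau,\rho]$ on the positive part, recover the negative part as $\M_2^-\otimes_{\M_2^+}(\blank)$ applied to $f^+$, and assemble the answer from the block-triangular shape of $f$. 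Both arguments are sound; each of your individual claims about $\M_2$ checks out against the chart in Section 2, and your dimension count for injectivity of $f^+$ legitimately uses (i) and (ii). What your route buys is twofold: it isolates exactly why the negative cone is harmless (it contributes nothing mod $\rho$ and is completely determined by the positive part), and it never invokes hypothesis (iii), so you have in fact proved a slightly stronger statement than the lemma as stated. What the paper's route buys is economy: it needs only the one observation about the main diagonal of $\M_2$ and never has to decompose the modules or discuss the mixing term $f^\sharp$.
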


\begin{proof}
Pick a free basis $\{e_\alpha\}$ for $N$  consisting of homogeneous
elements.  For each $s\in \Z$ let $N_s\subseteq N$ be the submodule spanned
by all $e_\alpha$ for which the bidegree $(p_\alpha,q_\alpha)$
satisfies $p_\alpha-q_\alpha \leq
s$ (these are the basis elements on all diagonals `higher than' the
$p-q=s$ diagonal).   
Note that $N_s=0$ for $s<r$, where $r$ is the
number specified in condition (iii).

Condition (iv) readily implies the following fact: for every
$p,q\in \Z$ there exists an $m\geq 0$ such that $[\rho^m
N]^{p,q}\subseteq N_{p-q-1}$.  In other words, every element of
$N^{p,q}$ that is a multiple of $\rho^m$ is in the $\M_2$-span of
basis elements from higher diagonals.  (One need only take $m=p-u+1$
here, where $u$ is from condition (iv)).  

We will prove by induction that each $N_s$ is contained in the image
of $f$.  We know this for $s<r$ since in that case $N_s=0$.  So assume $s\in
\Z$ and $N_{s-1}\subseteq \im f$. 
Since $M/\rho M \ra N/\rho N$ is an isomorphism it follows that $N=(\im
f) + \rho N$.  Substituting this equation for $N$ into itself, we then
find 
\[ N = (\im f)+\rho N = (\im f)+\rho^2 N =(\im f)+\rho^3 N = \cdots 
\]
So $N=(\im f)+\rho^n N$
for any $n\geq 1$.  

Now let $e_\alpha$ be a basis element lying in $N_s$, of bidegree
$(p,q)$ (so that $p-q\leq s$).  We may assume $p-q=s$, for
otherwise $e_\alpha \in N_{s-1}$ and so is in the image of $f$ by
induction.  By the second paragraph of this proof, there exists $m\geq
1$ such that $[\rho^m N]^{p,q} \subseteq N_{s-1}$.  But then we have
\[ N^{p,q}=(\im f)^{p,q} + [\rho^m
N]^{p,q} \subseteq (\im f) + N_{s-1} =\im f
\]
where the last equality uses our inductive assumption that
$N_{s-1}\subseteq \im f$.  
We have therefore shown that $e_\alpha \in \im f$, and since this
holds for every basis element we have $N_s\subseteq \im f$.

At this point we have shown that $f$ is surjective.
The finiteness condition (ii) then implies that $f$ is indeed an isomorphism.
\end{proof}

We now restate Theorem~\ref{th:main1} from the introduction, and tie up
its proof:

\begin{thm}
The map  $\eta^*\colon H^{*,*}(\Gr_k(\cU);\Z/2) \ra \bigl
[H^{*,*}(\Gr_1(\cU);\Z/2)^{\tens k }\bigr ]^{\Sigma_k}$
is an isomorphism of bigraded rings.
\end{thm}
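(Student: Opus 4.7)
The plan is to apply the algebraic Lemma~\ref{le:algebra} with $M = Y = H^{*,*}(\Gr_k(\cU);\Z/2)$, $N = Z = \Inv_k$, and $f = \eta^*$. Essentially all of the substantive work has already been done in the preceding sections; what remains is to check that the hypotheses of the lemma are satisfied and then quote it.

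First I would verify hypothesis (i) of the lemma: the bigraded rank functions of $Y$ and $Z$ agree pointwise by Proposition~\ref{pr:Y->Z}. Hypothesis (ii), the finite-dimensionality of each $Y^{p,q}$, follows because $Y$ is a free $\M_2$-module whose bigraded rank is a finite function in each bidegree (one can see this either from Proposition~\ref{pr:Y->Z} combined with Proposition~\ref{pr:rank-Inv}, or directly from the finiteness of the number of Schubert cells in each bidegree together with Kronholm's theorem). Hypothesis (iii), the existence of a lower bound $r$ on the diagonals supporting $Y$ and $Z$, follows from Corollary~\ref{co:bounds}: the rank function of $Z$ vanishes outside the region $\tfrac{x}{2} \leq y \leq x$ (with a shift for large $x$), and so in particular $d_c(Z) = 0$ for $c < 0$; by Proposition~\ref{pr:Y->Z} the same holds for $Y$, so we may take $r = 0$. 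Hypothesis (iv), the existence of a lower bound $u$ on topological degrees, is immediate since $Y^{p,q} = 0$ for $p < 0$ (we may take $u = 0$).

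Next I would recall, from Lemma~\ref{le:iso-mod-rho}, that the induced map $Y/\rho Y \to Z/\rho Z$ is an isomorphism. With all the hypotheses of Lemma~\ref{le:algebra} verified, that lemma immediately produces the conclusion that $\eta^* \colon Y \to Z$ is an isomorphism of $\M_2$-modules. Since $\eta^*$ is a ring homomorphism by construction (being induced by a map of spaces combined with the K\"unneth map), it is an isomorphism of bigraded rings, completing the proof.

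The step that carries the real content is of course Lemma~\ref{le:algebra}; the present argument just assembles the inputs. I do not expect any genuine obstacle at this stage, since the careful structuring of Section~\ref{se:proof} has packaged all the topology (Kronholm's theorem, the forgetful map into singular cohomology, the Schubert-cell counts) and all the combinatorics ($\Inv_k$ rank counts, partition bijections) into exactly the hypotheses the algebraic lemma consumes. The only mild point to double-check is that the injectivity of $[Y/\rho Y]^{p,q} \to [Z/\rho Z]^{p,q}$ used in Lemma~\ref{le:iso-mod-rho} is indeed the map induced by $\eta^*$ (rather than some abstract map of the same source and target), which follows because the factorization through the non-equivariant invariants in the diagram of Lemma~\ref{le:main}(c) is compatible with $\eta^*$.
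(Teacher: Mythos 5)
Your proposal is correct and follows essentially the same route as the paper: the paper's proof also just cites Proposition~\ref{pr:Y->Z} for hypothesis (i), notes that (ii)--(iv) hold for $Y$ and $Z$, and concludes by Lemma~\ref{le:algebra} together with Lemma~\ref{le:iso-mod-rho}. Your extra remarks (taking $r=0$ and $u=0$, and checking that the injection in Lemma~\ref{le:iso-mod-rho} is the one induced by $\eta^*$) are accurate elaborations of details the paper leaves implicit.
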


\begin{proof}
This is the map $Y\ra Z$ considered throughout this section.  Both $Y$
and $Z$ are free $\M_2$-modules that satisfy hypotheses (ii)--(iv) of
Lemma~\ref{le:algebra}.  Proposition~\ref{pr:Y->Z} verifies condition
(i) of that lemma.  
The result therefore follows by that lemma
together with Lemma~\ref{le:iso-mod-rho}.
\end{proof}


\section{The multiplicative structure of the ring of invariants}
\label{se:mult}

At this point in the paper we have proven that our map 
\[ \eta^*\colon
H^{*,*}(\Gr_k(\cU)) \ra [H^{*,*}(\Gr_1(\cU))^{\tens k}]^{\Sigma_k}
\] 
is an isomorphism of rings.  We also have a combinatorial description
of the bigraded rank function---that is, we understand the additive
structure of these rings, or their structure as $\M_2$-module.  In this section we
further investigate the ring of invariants, concentrating on the
multiplicative structure.  Recall that this ring of invariants is
denoted $\Inv_k$ for short.

\medskip

\subsection{First observations}
Recall from Section~\ref{se:add-basis} that we use the notation
$w_i=[a_1\ldots a_i]$ and $c_i=[b_1\ldots b_i]$.  These are the $i$th
elementary symmetric functions in the $a$'s and $b$'s, respectively.  
More generally, define the invariant element $\wc_{i,j}$ by
\[ \wc_{i,j}=[a_1\ldots a_i b_{i+1}\cdots b_{i+j}].
\]
Note that this only makes sense when $i+j\leq k$. Note also that
$\wc_{i,0}=w_i$ and $\wc_{0,j}=c_j$.  Finally, let us observe that the
bidegree of $\wc_{i,j}$ is $(i,i)+(2j,j)=(i+2j,i+j)$.  

As  a warm-up for our investigation let us consider some basic relations.
The easiest relation one encounters is 
\[ w_1^2=(a_1+\cdots+a_n)^2=a_1^2+\cdots+a_n^2 =
(\rho a_1+\tau b_1)+\cdots + (\rho a_n+\tau b_n) = \rho w_1 +\tau c_1.
\]
Analogously, 
\begin{align*}
w_2^2=[a_1a_2]^2 &=\sum_{i< j} (\rho a_i+\tau b_i)(\rho a_j+\tau
b_j)\\
&=\rho^2 \sum_{i<j}a_ia_j + \rho\tau \sum_{i<j}(a_ib_j+a_jb_i) + \tau^2
\sum_{i<j} b_ib_j
\\
& =\rho^2 w_2 + \rho\tau\sum_{i\neq j} a_ib_j + \tau^2 c_2 \\
&= \rho^2 w_2 + \rho\tau \cdot \wc_{1,1} + \tau^2 c_2.
\end{align*}
More generally we have the following (the proof is left as an exercise):

\begin{prop}
In $\Inv_k$ there is the relation
\[w_j^2=\tau^j c_j + \tau^{j-1}\rho \wc_{1,j-1} +
\tau^{j-2}\rho^2 \wc_{2,j-2} +\cdots + \tau\rho^{j-1} \wc_{j-1,1} + \rho^j
w_j
\]
for any $j\leq k$.
\end{prop}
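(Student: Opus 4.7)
The plan is to exploit characteristic $2$ so that squaring distributes over the defining sum of $w_j$, and then to expand the substitution $a_i^2 = \rho a_i + \tau b_i$ term by term.

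More precisely, write $w_j = \sum_{i_1<\cdots<i_j} a_{i_1}\cdots a_{i_j}$. Since all coefficients lie in $\Z/2$ and all the cross terms cancel in pairs, the first step is
\[ w_j^2 = \sum_{i_1<\cdots<i_j} a_{i_1}^2 a_{i_2}^2 \cdots a_{i_j}^2. \]
Next I would substitute $a_{i_\ell}^2 = \rho a_{i_\ell} + \tau b_{i_\ell}$ into each factor and expand, obtaining
\[ w_j^2 = \sum_{i_1<\cdots<i_j} \,\sum_{S \subseteq \{1,\ldots,j\}} \rho^{|S|}\tau^{j-|S|} \prod_{\ell \in S} a_{i_\ell} \prod_{\ell \notin S} b_{i_\ell}. \]
Collecting according to $s = |S|$ yields
\[ w_j^2 = \sum_{s=0}^{j} \rho^s \tau^{j-s} \Bigl(\,\sum_{i_1<\cdots<i_j}\,\sum_{|S|=s} \prod_{\ell \in S} a_{i_\ell} \prod_{\ell \notin S} b_{i_\ell}\Bigr). \]

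The final step is to identify the parenthesized sum with $\wc_{s,j-s}$. The key observation is that the data of an ordered tuple $i_1<\cdots<i_j$ together with a subset $S$ of size $s$ is the same as a pair of disjoint subsets $(A,B) \subseteq \{1,\ldots,k\}$ with $|A|=s$ and $|B|=j-s$: set $A = \{i_\ell : \ell \in S\}$ and $B = \{i_\ell : \ell \notin S\}$. Under this bijection the corresponding summand becomes $\prod_{i \in A} a_i \prod_{i \in B} b_i$, and the sum over all such disjoint pairs is precisely the orbit sum $\wc_{s,j-s} = [a_1\cdots a_s b_{s+1}\cdots b_j]$. Substituting back gives
\[ w_j^2 = \sum_{s=0}^{j} \rho^s \tau^{j-s}\, \wc_{s,j-s}, \]
which is the claimed formula once one records that $\wc_{0,j}=c_j$ and $\wc_{j,0}=w_j$.

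There is no real obstacle here: the only thing one has to be careful about is the combinatorial reindexing in the last step, to make sure every disjoint pair $(A,B)$ of the required cardinalities is counted exactly once. Everything else is formal manipulation with the relation $a_i^2 = \rho a_i + \tau b_i$ and with the fact that squaring is additive in characteristic $2$.
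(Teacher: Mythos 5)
Your proposal is correct and follows exactly the route the paper intends: the paper works out the cases $j=1,2$ by squaring term-by-term in characteristic $2$, substituting $a_i^2=\rho a_i+\tau b_i$, and collecting, then leaves the general case as an exercise, which your argument carries out. The reindexing of pairs $(i_1<\cdots<i_j,\,S)$ as disjoint subsets $(A,B)$ correctly identifies the inner sum with the orbit sum $\wc_{s,j-s}$, so nothing is missing.
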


Next let us consider the products $w_1w_i$ for
various $i$.  For instance, $w_1w_2 = (a_1+\ldots+a_n)(a_1a_2+\ldots +
a_{n-1}a_n)$.  When we distribute, we will get terms that look like
$a_1^2a_2$, and also terms that look like $a_1a_2a_3$.  Note that the
former term only appears once, whereas the latter appears
$\binom{3}{2}=3$ times (which is equivalent to once, since we are in
characteristic two).  So we can write
\[ w_1w_2=[a_1^2a_2] + [a_1a_2a_3]=[a_1^2a_2]+w_3.\]
We must be careful when identifying $[a_1^2a_2]$.  We have
\[ [a_1^2a_2]=\sum_{i\neq j} a_i^2a_j =\sum_{i\neq j} (\rho a_i+\tau
b_i) a_j = \rho \sum_{i\neq j} a_ia_j + \tau \sum_{i\neq j} b_ia_j =
0 + \tau \wc_{1,1}.
\]  
Note that $\sum_{i\neq j} a_ia_j =0$ only because we are in
characteristic $2$.  

As one more example, let's compute $w_1w_3$.  We are looking at
the product $(a_1+\ldots+a_n)(a_1a_2a_3+\ldots )$, and so we have
terms that look like $a_1^2a_2a_3$ and $a_1a_2a_3a_4$.  The former
occurs exactly once, the latter $\binom{4}{1}=4$ times (equivalent to
zero times, mod $2$).  So 
\[ w_1w_3=[a_1^2a_2a_3]=\sum_{{j <k} \atop {i\notin \{j,k\}}} 
a_i^2a_ja_k 
=\sum_{{j <k} \atop {i\notin \{j,k\}}} 
(\rho a_i + \tau b_i) a_ja_k
= \rho[a_1a_2a_3]+\tau[a_1a_2b_3].
\]
The last equality takes a little thought: we must ask ourselves how many
times a typical term $a_1a_2a_3$ appears in the 
sum $\sum\limits_{{j <k} \atop {i\notin \{j,k\}}} 
a_ia_ja_k$, and the answer is that it occurs exactly three times
(equivalent to once, mod $2$).

The following proposition is easily proven by the above techniques:

\begin{prop}
\label{pr:w-decomp}
In $\Inv_k$ one has the relations $w_1w_{2i}=\tau \cdot \wc_{2i-1,1} +
w_{2i+1}$ and $w_1w_{2i+1}=\tau \wc_{2i,1}+\rho w_{2i+1}$.
\end{prop}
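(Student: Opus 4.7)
The plan is to mimic and formalize the sample calculations of $w_1 w_1$, $w_1 w_2$, $w_1 w_3$ that appear immediately before the proposition, and to keep careful track of multiplicities modulo $2$. Expanding $w_1 w_j = \sum_i a_i \cdot \sum_{i_1 < \cdots < i_j} a_{i_1} \cdots a_{i_j}$, every term in the expansion falls into exactly one of two classes: the \emph{collision} terms where $i \in \{i_1, \ldots, i_j\}$, which produce a single squared factor $a_l^2$ times a product of $j-1$ other distinct $a$'s; and the \emph{distinct} terms where $i \notin \{i_1, \ldots, i_j\}$, which produce a product of $j+1$ pairwise distinct $a$'s. I would collect these two classes and compute each separately.

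For the distinct terms, a given monomial $a_{l_1} \cdots a_{l_{j+1}}$ with pairwise distinct indices is obtained exactly $j+1$ times (once for each choice of which factor came from $w_1$), so modulo $2$ its total contribution is $(j+1) \cdot w_{j+1}$. This vanishes when $j$ is odd and equals $w_{j+1}$ when $j$ is even. For the collision terms, I would write them as $[a_1^2 a_2 \cdots a_j] = \sum_{l,\, m_1 < \cdots < m_{j-1},\, l \notin \{m_s\}} a_l^2 \, a_{m_1} \cdots a_{m_{j-1}}$ and then substitute the ground-ring relation $a_l^2 = \rho a_l + \tau b_l$, exactly as in the computation of $[a_1^2 a_2]$ preceding the proposition. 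The $\tau$-part assembles into the invariant $\wc_{j-1,1} = [b_1 a_2 \cdots a_j]$, each of whose monomials is hit once. The $\rho$-part is $\rho \sum a_l a_{m_1} \cdots a_{m_{j-1}}$, in which each product of $j$ distinct $a$'s appears $j$ times (once per choice of $l$), contributing $j \cdot \rho w_j$ modulo $2$ -- so $0$ when $j$ is even and $\rho w_j$ when $j$ is odd.

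Putting the two classes together gives, for $j = 2i$ even, $w_1 w_{2i} = \tau \wc_{2i-1,1} + w_{2i+1}$, and for $j = 2i+1$ odd, $w_1 w_{2i+1} = \rho w_{2i+1} + \tau \wc_{2i,1}$, which are precisely the stated relations. The main obstacle, such as it is, is the purely combinatorial step of getting the mod-$2$ multiplicity counts right: there are two places where a count equals a binomial coefficient like $\binom{j+1}{1}$ or $\binom{j}{1}$, and it is easy to double-count or drop a factor if one is not careful to distinguish ordered choices (of which factor of the product contributed the ``extra'' $a_i$) from unordered ones (the monomial in the final answer). Once the parity bookkeeping is pinned down, the rest is a mechanical application of the relation $a_l^2 = \rho a_l + \tau b_l$.
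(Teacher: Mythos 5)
Your proposal is correct and follows exactly the route the paper intends: the paper states the proposition is ``easily proven by the above techniques'' and your argument is precisely the general-$j$ formalization of its worked examples for $w_1w_2$ and $w_1w_3$ (splitting into collision and distinct terms, substituting $a_l^2=\rho a_l+\tau b_l$, and tracking the mod-$2$ multiplicities $j+1$ and $j$). All the parity counts check out.
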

  
Note that the first relation from Proposition~\ref{pr:w-decomp} shows
that
$w_{2i+1}$ is decomposable in $\Inv_k$.  Without much trouble this
generalizes to the following result.  Compare \cite[Remark 3.4]{M}.

\begin{prop}  
\label{pr:w-indecomposable}
Let $1\leq j\leq k$.  Then
$w_j$ is indecomposable in $\Inv_k$ if and only if $j$ is a power of $2$.
\end{prop}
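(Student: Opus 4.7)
The plan is to adopt the standard meaning of \emph{indecomposable} for a graded algebra over the ground ring $\M_2$: an element $x\in\Inv_k$ is indecomposable exactly when its class is nonzero in $Q:=I/(I^2+\bar{\mathfrak{m}} I)$, where $I=\ker(\Inv_k\to \M_2)$ is the augmentation ideal and $\bar{\mathfrak{m}}\subset \M_2$ is the ideal of all elements of nonzero bidegree. Equivalently, $x$ must appear in every minimal $\M_2$-algebra generating set. The argument splits according to whether $\binom{j}{r}$ can be made odd for some $0<r<j$.

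For the ``decomposable'' direction, suppose $j$ is not a power of $2$.  Its binary expansion has at least two $1$s, so Lucas's theorem supplies $0<r<j$ with $\binom{j}{r}$ odd.  I would expand
\[ w_r\cdot w_{j-r}=\sum_{\substack{|S|=r\\ |T|=j-r}}\Bigl(\prod_{i\in S}a_i\Bigr)\Bigl(\prod_{l\in T}a_l\Bigr) \]
and sort the summands by $|S\cap T|$.  Those with $S\cap T=\emptyset$ produce each squarefree monomial of length $j$ with multiplicity $\binom{j}{r}\equiv 1\pmod 2$, so they sum to $w_j$.  Every other summand contains an $a_i^2$, which I replace using $a_i^2=\rho a_i+\tau b_i$; after the substitution the remainder $w_rw_{j-r}-w_j$ is $\Sigma_k$-invariant and, when rewritten over the standard $\M_2$-basis of $\Inv_k$ from Section~\ref{se:add-basis}, every coefficient is a polynomial in $\rho,\tau$ with no constant term.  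Thus $w_rw_{j-r}-w_j\in\bar{\mathfrak{m}} I$, giving $w_j=w_rw_{j-r}+(w_rw_{j-r}-w_j)\in I^2+\bar{\mathfrak{m}} I$.

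For the ``indecomposable'' direction, assume $j=2^a$.  I would pass to the quotient ring homomorphism $\Phi\colon\Inv_k\to\overline{\Inv_k}$ defined by killing $\bar{\mathfrak{m}}$: in $T_k/\bar{\mathfrak{m}} T_k$ the relation $a_i^2=\rho a_i+\tau b_i$ collapses to $a_i^2=0$, so the image of $\Inv_k$ sits inside
\[ \bigl[\Lambda_{\F_2}(a_1,\ldots,a_k)\tens_{\F_2}\F_2[b_1,\ldots,b_k]\bigr]^{\Sigma_k}, \]
the classical invariant ring computed in the appendix.  The appendix produces a minimal generating set for this ring whose $a$-only generators are precisely $[a_1\cdots a_{2^i}]$ for $1\leq 2^i\leq k$; in particular $\Phi(w_{2^a})$ is nonzero in $\overline{I}/\overline{I}^2$.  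Since $\Phi$ sends $\bar{\mathfrak{m}}\mapsto 0$, any decomposition $w_{2^a}=\sum x_\iota y_\iota+\sum\mu_\iota z_\iota\in I^2+\bar{\mathfrak{m}} I$ in $\Inv_k$ would descend to a decomposition of $\Phi(w_{2^a})$ in $\overline{I}^2$, a contradiction.

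The main obstacle is the bookkeeping in the first direction: verifying that the remainder $w_rw_{j-r}-w_j$ genuinely lies in $\bar{\mathfrak{m}} I$ rather than merely in $I$.  This is enforced by invariance: the remainder is $\Sigma_k$-invariant, and when expanded against the $\M_2$-basis, every contribution originates from an $a_i^2\mapsto\rho a_i+\tau b_i$ substitution, so the coefficient of each orbit sum is a sum of $\M_2$-monomials each having at least one factor of $\rho$ or $\tau$.  Once this is confirmed, the second direction is essentially immediate given the appendix's generator list.
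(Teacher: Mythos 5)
Your proof is correct. The decomposability half coincides with the paper's argument: choose $r$ with $\binom{j}{r}$ odd, observe that $w_rw_{j-r}=w_j+(\text{terms containing some }a_i^2)$, and use $a_i^2=\rho a_i+\tau b_i$ to push the error terms into $(\rho,\tau)\cdot I\subseteq I^2+\bar{\mathfrak{m}}I$; your explicit check that the remainder lies in $\bar{\mathfrak{m}}I$ (not merely in $I$) spells out what the paper compresses into the phrase that elements of $(\rho,\tau)$ are ``by nature decomposable.'' The two arguments diverge only in the indecomposability half. The paper kills $\rho$, $\tau$, \emph{and} all the $b_i$, landing in $\Lambda_{\F_2}(a_1,\ldots,a_k)^{\Sigma_k}$, where indecomposability of $\sigma_{2^a}$ is the elementary Proposition~\ref{pr:exterior-inv}; you kill only $\rho$ and $\tau$, landing in the deRham ring of invariants $L_k$ of the appendix, and invoke the minimality statement of Theorem~\ref{th:En}(a). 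Both reductions are valid and non-circular, since the appendix is independent of Section~\ref{se:mult}, but your route calls on the full strength of the appendix's main theorem where the paper gets by with its much easier warm-up proposition. In effect, your argument for this half is the specialization to $w_{2^a}$ of the paper's later proof of Theorem~\ref{th:indecomp}, which proceeds by exactly the reduction to $L_k$ that you describe; the only cost is using a heavier input than necessary for this particular statement.
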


\begin{proof}
If $j$ is not a power of $2$, then $\binom{j}{i}$ is odd for some $i$.
Consider the product 
\[ w_i w_{j-i}=(a_1a_2\ldots a_i + \cdots)(a_1a_2\ldots a_{j-i}+
\cdots).
\]
When we distribute, we have some terms which contain one or more
squares---these belong to the ideal $(\rho,\tau)$ of $\Inv_k$ because of
the relation  $a_i^2=\rho a_i + \tau b_i$.  A typical term which
doesn't involve squares is $a_1a_2\ldots a_j$, and this appears
exactly $\binom{j}{i}$ times in the big sum.  So we can write
\[ w_i w_{j-i} \in (\rho,\tau) + w_{j}.
\]
But the elements of $(\rho,\tau)$ are by nature decomposable, and so
we have that $w_j$ is decomposable.

For the proof that $w_{2^r}$ is indecomposable, we map our ring
$\Inv_k$ to a simpler ring where it is easier to prove this.  
Specifically, consider the map
\[ \M_2[a_1,\ldots,a_k,b_1,\ldots,b_k]/(a_i^2=\rho a_i+\tau b_i) \ra
\Lambda_{\Z/2}(a_1,\ldots,a_k) \]
that sends $\rho$, $\tau$, and all the $b_i$'s to zero.  Upon taking
invariants this gives a map
\[ \Inv_k \ra \Lambda_{\Z/2}(a_1,\ldots,a_k)^{\Sigma_k}\]
that sends each $w_i$ to the $i$th symmetric function $\sigma_i$ in the $a_j$'s.  
But in
$\Lambda_{\Z/2}(a_1,\ldots,a_k)^{\Sigma_k}$ it is well-known that
$\sigma_i$ is indecomposable when $i$ is a power of $2$ (see
Proposition~\ref{pr:exterior-inv} below for a proof).    
\end{proof}

\subsection{Generalized Stiefel-Whitney classes}
One of the difficulties in studying the ring $\Inv_k$ is that there
does not seem to be a clear choice of which algebra generators to use;
every choice seems to have drawbacks.
The $\wc$ classes defined above represent one extreme: they result
from making the indices on the $a$'s and $b$'s disjoint.  The opposite
approach is to make the indices overlap as much as possible, and that
leads to the following definition:
\[ w_i^{(e)}=[a_1\ldots a_ib_1^e\ldots b_i^e].
\]
Note that this defines an element of $\Inv_k$ for $1\leq i\leq k$ and
$0\leq e$.  It has bidegree $(i,i)+ei(2,1)=(i(2e+1),i(e+1))$, and in
terms of our rank charts it lies on the same line of slope
$\frac{1}{2}$ as the class $w_i$.  Notice that $w_i^{(0)}=w_i$.

\subsection{Indecomposables}
Let $\epsilon \colon \M_2[a_1,\ldots,a_k,b_1,\ldots,b_k]/(a_i^2=\rho
a_i+\tau b_i) \ra \M_2$ be defined by sending each $a_i$ and $b_i$ to
zero.  We will also write $\epsilon$ for the restriction to $\Inv_k$.
Let $I_k\subseteq \Inv_k$ be the kernel of $\epsilon \colon \Inv_k \ra
\M_2$.  Then $I_k/I_k^2$ is a bigraded $\M_2$-module that is readily
checked to be free; it is called the
\dfn{module of indecomposables} for $\Inv_k$ relative to $\M_2$.  Our goal is to
determine the bigraded rank function for $I_k/I_k^2$, as well as a
basis.  In other words, we aim to write down a complete set of
representatives for the indecomposables in $\Inv_k$.  

\begin{remark}
It is worth stressing that we have set things up so that
`indecomposable' means {\it relative to\/} $\M_2$.  The elements
$\rho$, $\tau$, and $\theta$ are of course indecomposable elements of
$\Inv_k$ in the `absolute' sense, but we do not want to keep track of
them.  They will not be reflected in the rank function for $I_k/I_k^2$, which
by definition  counts the number of basis elements over $\M_2$.
\end{remark}

The main result is as follows:

\begin{thm} \mbox{}\par
\label{th:indecomp}
\begin{enumerate}[(a)]
\item
The indecomposables of $\Inv_k$ are represented by the classes
$c_1,\ldots,c_k$ together with the classes $w_{2^i}^{(e)}$ for $1\leq
2^i \leq k$ and $0\leq e \leq \frac{k}{2^i}-1$.  That is to say, these
classes give a free basis for $I_k/I_k^2$ as an $\M_2$-module.
\item The number of indecomposables for $\Inv_k$ is 
\[ 3k-(\text{\# of ones in the binary expansion of $k$}).
\]
\item For $1\leq 2^i\leq k$ and $0\leq e\leq \frac{k}{2^i}-1$
the classes $\wc_{2^i,e2^i}$ and $w_{2^i}^{(e)}$ 
are equivalent modulo decomposables.
\item 
For $p,q\in \Z$, $\rank^{p,q}(I_k/I_k^2)=0$ unless $0\leq
p$ and $0\leq q\leq k$.  
\item  When $p$ is odd and $0\leq p$,  
\[ \rank^{p,q}(I_k/I_k^2)=\begin{cases} 1 & \text{if $q=\frac{p+1}{2}$},\\
0 & \text{otherwise}.
\end{cases}
\]
The unique indecomposable in topological dimension $p$ is represented
by $w_1^{(\frac{p-1}{2})}$, or equivalently by $\wc_{1,\frac{p-1}{2}}$.
\item When $p$ is even and positive, write $p=2^i(2e+1)$.  Then
\[ \rank^{p,q}(I_k/I_k^2)=\begin{cases} 1 & \text{if $q=\frac{p}{2}$
or $q=\frac{p}{2}+2^{i-1}$},\\
0 & \text{otherwise}.
\end{cases}
\]
When $q=\frac{p}{2}$, the unique indecomposable in bidegree $(p,q)$ is represented by the
Chern class $c_{q}$.  When $q=\frac{p}{2}+2^{i-1}$ the unique
indecomposable is represented by $w_{2^i}^{(e)}$, or equivalently by
$\wc_{2^i,e\cdot 2^i}$.  
\end{enumerate}
\end{thm}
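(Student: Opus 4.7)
The plan is to reduce the entire theorem to an invariant-theoretic question over $\F_2$. Let $R_k := \Inv_k/(\rho,\tau,\theta)\Inv_k$, and let $\bar I_k \subseteq R_k$ be the image of $I_k$. Since $\Inv_k = \M_2 \oplus I_k$ as $\M_2$-modules, the projection $\Inv_k \to R_k$ carries $I_k$ onto $\bar I_k$ with kernel $I \cdot I_k$, and carries $I_k^2$ onto $\bar I_k^2$; thus
\[
I_k/(I_k^2 + I\cdot I_k) \,\iso\, \bar I_k/\bar I_k^2,
\]
and the left-hand side in each bidegree computes $\rank^{p,q}(I_k/I_k^2)$, using freeness of $I_k/I_k^2$ over $\M_2$. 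Because $a_i^2 = \rho a_i + \tau b_i$ collapses to $a_i^2 = 0$ in $R_k$, we have
\[
R_k \;=\; \bigl[\Lambda_{\F_2}(a_1,\ldots,a_k) \tens_{\F_2} \F_2[b_1,\ldots,b_k]\bigr]^{\Sigma_k},
\]
precisely the ring treated in the appendix. The theorem therefore becomes a statement about the indecomposables of $R_k$ as an $\F_2$-algebra, together with a bidegree count.

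For part (c), the difference $w_{2^i}^{(e)} - \wc_{2^i,\,e\cdot 2^i}$ expands as a sum of orbit sums $[a_I b_J^e]$ in which the index sets $I$ and $J$ (both of size $2^i$) overlap in some $m\geq 1$ indices. For each overlap pattern one factors off the $a$'s indexed by $I\setminus J$ and the $b$'s indexed by $J\setminus I$ to obtain a product of two strictly lower-degree elements of $I_k$; hence every such term lies in $I_k^2$. This is the same flavor of manipulation seen in Proposition~\ref{pr:w-decomp}.

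For the indecomposability of the proposed generators, the Chern class $c_i$ is handled directly: the projection $a_j\mapsto 0$ sends $\Inv_k$ onto $\F_2[b_1,\ldots,b_k]^{\Sigma_k}$ and $c_i$ to $\sigma_i(b)$, which is indecomposable. For $w_{2^i}^{(e)}$, I would pass to $R_k$ and appeal to the appendix, combined with the classical fact (Proposition~\ref{pr:exterior-inv}) that $\sigma_{2^i}(a)$ is indecomposable in the exterior invariants $\Lambda_{\F_2}(a_1,\ldots,a_k)^{\Sigma_k}$. The additional $b_i^e$ factors in $w_{2^i}^{(e)}$ twist this element but do not enable any new factorization, so the class remains indecomposable in $R_k$.

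Parts (d)--(f) and the count in (b) are then bookkeeping. The class $c_i$ occupies bidegree $(2i,i)$ and $w_{2^i}^{(e)}$ occupies bidegree $\bigl(2^i(2e+1),\,2^i(e+1)\bigr)$, so both have $0\leq q\leq k$, giving (d). For $(p,q)$ with $p$ odd, only a $w_1^{(e)}$-generator can fit (no Chern class nor $w_{2^i}$ with $i\geq 1$ has odd topological degree), forcing $e=(p-1)/2$ and $q=(p+1)/2$, which gives (e). For $p$ even, writing $p = 2^i(2e+1)$, exactly the Chern class $c_{p/2}$ at $q=p/2$ and $w_{2^i}^{(e)}$ at $q=p/2+2^{i-1}$ fit, giving (f). Summing, the total number of indecomposables is
\[
k + \sum_{i\geq 0,\ 2^i\leq k}\lfloor k/2^i\rfloor \;=\; 3k - s_2(k),
\]
by Legendre's formula for $v_2(k!)$, proving (b). The main obstacle is the appendix-level assertion that the $c_i$'s together with the $w_{2^i}^{(e)}$'s truly represent \emph{all} the indecomposables of $R_k$ — i.e., that no stray indecomposables appear from outside this list. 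Once the mixed exterior--polynomial indecomposability is in hand, everything else reduces to symmetric-function manipulations already familiar from Propositions~\ref{pr:w-decomp} and~\ref{pr:w-indecomposable}.
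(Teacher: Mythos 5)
Your overall strategy is the paper's: kill the augmentation ideal of $\M_2$ to pass to the ``deRham'' ring of invariants $L_k=\bigl[\Lambda_{\F_2}(a_1,\ldots,a_k)\tens\F_2[b_1,\ldots,b_k]\bigr]^{\Sigma_k}$, note that this identification computes $I_k/I_k^2$, and then do the bidegree bookkeeping for (b) and (d)--(f). That reduction is sound (the paper routes through $\Z/2[\tau,\rho]$ first, but the effect is the same), and your count $k+\sum_{2^i\leq k}\lfloor k/2^i\rfloor=3k-s_2(k)$ via Legendre's formula is exactly the paper's computation.

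The genuine gap is the one you yourself flag: the classification of the indecomposables of $L_k$ is asserted, not proven, and the one method you offer for the indecomposability of $w_{2^i}^{(e)}$ would fail for $e\geq 1$. Projecting to $\Lambda_{\F_2}(a_1,\ldots,a_k)^{\Sigma_k}$ kills every $b_j$ and hence kills $w_{2^i}^{(e)}$ for $e\geq 1$, so Proposition~\ref{pr:exterior-inv} only handles $e=0$; the claim that the extra $b$-factors ``do not enable any new factorization'' is precisely what must be proved, and the paper does not prove it by any local argument. Instead it proves independence globally, by showing the surjection $\Lambda(\alpha_{i,e})\tens\F_2[\sigma_i(b)]\fib L_\infty$ is an isomorphism via a Poincar\'e-series match with $\F_2[v_1,v_2,\ldots]$ (Proposition~\ref{pr:stable-iso}), and then transfers the result to $L_k$ using that $L_\infty\ra L_k$ is an isomorphism in degrees $\leq k$ (Lemma~\ref{le:iso}). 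You also leave two necessary steps unaddressed. First, completeness of the list requires showing that every other orbit sum $[a_Ib_I^{d_I}a_Jb_K^{e_K}]$ is decomposable, and in particular that $w_{2^i}^{(e)}$ \emph{becomes} decomposable once $e>k/2^i-1$; the paper's proof of the latter (Corollary~\ref{co:Ln-gen2}) is a stabilization trick --- prove $w_{2^i}^{(e)}\equiv \wc_{2^i,e2^i}$ in $L_N$ for $N\gg 0$ and push down to $L_k$, where the disjoint-index class vanishes --- which your outline does not supply. Second, your argument for (c) is not yet a proof: the difference of two orbit sums does not ``expand'' into anything by itself. One must exhibit an explicit element of $I_k^2$ whose expansion contains both classes, e.g.\ $[a_1\cdots a_{2^i}]\cdot[b_1^e\cdots b_{2^i}^e]$, whose cross terms are decomposable by Lemma~\ref{le:in1}(1); the paper instead uses the product $[a_1\cdots a_{2^i}]\cdot[b_1\cdots b_{2^ie}]$ together with the iterative identities of Lemma~\ref{le:in3}. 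Until the appendix-level material (Lemmas~\ref{le:in1}--\ref{le:in3} and the counting argument) is actually supplied, the proof of parts (a), (c), and hence (b), (e), (f) is incomplete.
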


To paraphrase the above theorem, in the limiting case $k\ra\infty$ 
there is one indecomposable in every
odd topological dimension and two indecomposables in every even
topological dimension.  The following chart shows the exact
bidegrees, with different symbols for different types of
indecomposables:

\begin{picture}(300,180)(-50,-10)
\multiput(0,0)(0,15){11}{\line(1,0){240}}
\multiput(0,0)(15,0){17}{\line(0,1){150}}
\multiput(7.5,7.5)(30,15){8}{\circle{5}}
\multiput(19,18.5)(30,15){8}{$\square$}
\multiput(21,20.5)(30,15){8}{$\scriptscriptstyle{1}$}
\multiput(36,35.5)(60,30){4}{$\scriptscriptstyle{2}$}
\multiput(66,65.5)(120,60){2}{$\scriptscriptstyle{4}$}
\put(126,125.5){$\scriptscriptstyle{8}$}
\put(34,34){$\square$}
\put(64,64){$\square$}
\put(124,124){$\square$}
\put(94,64){$\square$}
\put(154,94){$\square$}
\put(184,124){$\square$}
\put(214,124){$\square$}
\end{picture}

\noindent
The circles represent the Chern classes, whereas the squares represent
the $w$-classes.  The squares with an $i$ inside represent $w_i^{(e)}$
classes, for $0\leq e$.  The pattern here is that the $w_i^{(e)}$
classes start in bidegree $(i,i)$ and then proceed up along the line
of slope $\frac{1}{2}$, occuring every $i$ steps along this line,
where ``step'' means a $(2,1)$ move. 

For $\Inv_k$ one cuts the chart off and only
takes the classes in weights less than or equal to $k$.  For example,
in $\Inv_5$ there will be the following
indecomposables (given in order of increasing topological degree):
\[ w_1,\ c_1,\ w_2,\ w_1^{(1)},\ c_2,\ w_4,\ w_1^{(2)},\ c_3,\
w_2^{(1)},\ w_1^{(3)},\ c_4,\ w_1^{(4)},\ c_5.
\]
Note that Theorem~\ref{th:indecomp}(b) predicts the number of
indecomposables to be $15-2=13$, which agrees with the above list.

\vspace{0.1in}

Our goal is now to prove Theorem~\ref{th:indecomp}, proceeding by a
series of reductions.  

\begin{proof}[Proof of Theorem~\ref{th:indecomp}]
The complexities of $\M_2$ are  irrelevant to the
considerations at hand.  To this end, define
$R_k=\Z/2[\tau,\rho,a_1,\ldots,a_k,b_1,\ldots,b_k]/(a_i^2=\rho
a_i+\tau b_i)$.  Let $S_k=R_k^{\Sigma_k}$, where the $\Sigma_k$-action
permutes the $a_i$'s and $b_i$'s but fixes $\rho$ and $\tau$.  
Let $\epsilon\colon R_k \ra \Z/2[\tau,\rho]$ be the map that sends 
$a_i$ and $b_i$ all to zero, for $1\leq i \leq k$.  Let $J_k$
be the augmentation ideal of $S_k$, defined as
\[ J_k=\ker (S_k \ra R_k \llra{\epsilon} \Z/2[\tau,\rho]).
\]
It is easy to see that $\Inv_k\iso S_k\tens_{\Z/2[\tau,\rho]} \M_2$
and $I_k/I_k^2 \iso (J_k/J_k^2)\tens_{\Z/2[\tau,\rho]} \M_2$.  So 
the bigraded rank function for $J_k/J_k^2$ over
$\Z/2[\tau,\rho]$ coincides with the bigraded rank function for $I_k/I_k^2$ 
over $\M_2$.  It will therefore suffice for us to prove the theorem in
the former case.  

A free basis for $J_k/J_k^2$ over $\Z/2[\tau,\rho]$ is the same as a
vector space basis for $J_k/[J_k^2+(\rho,\tau)J_k]$ over $\Z/2$.  
This is the form in which we will study the problem.  

Let $\tilde{R}_k=\Z/2[a_1,\ldots,a_k,b_1,\ldots,b_k]/(a_i^2)$ with the
evident $\Sigma_k$-action, and let $\tilde{S}_k=\tilde{R}_k^{\Sigma_k}$.
Consider the diagram
\[ \xymatrix{
J_k \ar[d]\ar@{ >->}[r] & S_k \ar[d]\ar[r]^-\epsilon & \Z/2[\tau,\rho]
\ar[d] \\
\tilde{J}_k \ar@{ >->}[r] & \tilde{S}_k \ar[r]^{\tilde{\epsilon}} & \Z/2
}
\]
where the vertical maps send $\rho$ and $\tau$ to zero, and
$\tilde{J}_k$ is the kernel of $\tilde{\epsilon}$.  It is easy to see
that $S_k\ra \tilde{S_k}$ is surjective: a $\Z/2$-basis for the target
is given by the orbit sums $[m]$ where $m$ is a monomial in the $a$'s
and $b$'s, and such an orbit sum lifts into $S_k$.  The same argument
shows that $J_k\ra \tilde{J}_k$ is surjective.  We in fact have a
surjection 
\[ J_k/[J_k^2+(\rho,\tau)J_k] \fib \tilde{J}_k/\tilde{J}_k^2, 
\]
and it is easy to see that this is actually an isomorphism.
  
We have therefore reduced our problem to understanding the module of
indecomposables $\tilde{J}_k/\tilde{J}_k^2$ for the ring
$\tilde{S}_k$.   This is a fairly routine algebra problem; we give a
full treatment in Appendix A for lack of a suitable reference.  See
Theorem~\ref{th:En} for the classification of the indecomposables,
proving parts (a) and (b).  The third statement in Lemma~\ref{le:in3} proves part
(c), and parts (d)--(f) are really just
restatements of (a) and (c).
\end{proof}

\subsection{Relations}
\label{se:relations}
In general it seems that writing down a complete set of relations
for $\Inv_k$ is not practical or useful.  See the cases of $k=2$ and
$k=3$ described in the next section.  The relations tend to be
numerous and also fairly complicated.  One general remark worth making
is that there will always be a relation for the square of a
$w_i^{(e)}$ class.  The square of $[a_1\ldots a_ib_1^e\ldots b_i^e]$ will be
$[a_1^2\ldots a_i^2b_1^{2e}\ldots b_i^{2e}]$, and each $a_j^2$
decomposes as $\rho a_j+\tau b_j$.  For example,
\begin{myequation}
\label{eq:w-square} \Bigl [w_1^{(e)}\Bigr
]^2=[a_1^2b_1^{2e}]=\rho[a_1b_1^{2e}]+\tau[b_1^{2e+1}] =\rho
w_1^{(2e)}+\tau [b_1^{2e+1}].
\end{myequation}
To express this in terms of indecomposables we need to write the power
sum $[b_1^{2e+1}]$ as a polynomial in the elementary symmetric
functions, via the mod $2$ Newton polynomials.  This already produces
an expression with lots of terms.  If $2e>k-1$ then $w_1^{(2e)}$ is
not an indecomposable and we also need to rewrite that term.  This can
be handled via the following result:

\begin{lemma}
\label{le:w1e}
In $\Inv_k$ one has the relation 
\[ w_1^{(e)}=w_1^{(e-1)}c_1 + w_1^{(e-2)}c_2+\cdots + w_1^{(e-k)}c_k
\]
for any $e\geq k$.
\end{lemma}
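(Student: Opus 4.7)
The plan is to reduce the identity to a Cayley--Hamilton style relation satisfied individually by each $b_i$, which is completely transparent over $\F_2$.

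First I would rewrite both sides in terms of the monomial-orbit sums. By definition $w_1^{(e)}=[a_1b_1^e]=\sum_{i=1}^k a_ib_i^e$, and $c_j=\sigma_j(b_1,\ldots,b_k)$. Thus the proposed relation is
\[
\sum_{i=1}^k a_ib_i^e \;=\; \sum_{j=1}^k \sigma_j(b_1,\ldots,b_k)\cdot\sum_{i=1}^k a_ib_i^{e-j}.
\]
Since the right-hand side is $\sum_i a_i\bigl(\sum_{j=1}^k \sigma_j\, b_i^{e-j}\bigr)$ and the left-hand side is $\sum_i a_i\cdot b_i^e$, it suffices to prove for each fixed $i$ the purely polynomial identity
\[
b_i^e \;=\; \sum_{j=1}^k \sigma_j(b_1,\ldots,b_k)\cdot b_i^{e-j}
\]
in $\F_2[b_1,\ldots,b_k]$; then multiplying by $a_i$ and summing gives the result. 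Note that the $a$-relations $a_i^2=\rho a_i+\tau b_i$ play no role, since every manipulation is linear in the $a_i$'s.

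Next I would establish this polynomial identity by applying the characteristic polynomial $P(x)=\prod_{l=1}^k(x-b_l)$. Expanding and using $\mathrm{char}=2$ (so all signs disappear),
\[
P(x) \;=\; x^k + \sigma_1 x^{k-1} + \sigma_2 x^{k-2} + \cdots + \sigma_k.
\]
The factor with $l=i$ vanishes at $x=b_i$, giving $P(b_i)=0$, hence
\[
b_i^k \;=\; \sigma_1 b_i^{k-1} + \sigma_2 b_i^{k-2} + \cdots + \sigma_k.
\]
For $e\geq k$ I multiply both sides by $b_i^{e-k}$ (this is where the hypothesis $e\geq k$ is used, to keep all exponents nonnegative) to obtain exactly the desired identity $b_i^e=\sum_{j=1}^k \sigma_j b_i^{e-j}$. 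Substituting back and resumming yields the relation in $\Inv_k$.

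There is no real obstacle here --- the only subtlety to flag is the mild one that the argument takes place in the polynomial ring $\F_2[b_1,\ldots,b_k]$ before multiplying by $a_i$, so that the quotient relations on the $a$'s never enter the computation, and the sign-free form of the characteristic polynomial in characteristic $2$ is essential for matching the stated relation on the nose (without alternating signs on the $c_j$).
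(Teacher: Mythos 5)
Your proof is correct, but it takes a different route from the paper's. The paper stays inside the orbit-sum calculus: it proves the chain of identities $[a_1b_1^e]=[a_1b_1^{e-1}]\cdot[b_1]+[a_1b_1^{e-1}b_2]$, $[a_1b_1^{e-1}b_2]=[a_1b_1^{e-2}]\cdot[b_1b_2]+[a_1b_1^{e-2}b_2b_3]$, and so on, terminating at $[a_1b_1^{e-(k-1)}b_2\cdots b_k]=w_1^{(e-k)}c_k$, and then telescopes. You instead exploit the fact that $w_1^{(e)}=\sum_i a_ib_i^e$ is linear in the $a_i$'s, so the identity decouples into the single-variable relations $b_i^e=\sum_{j=1}^k\sigma_j b_i^{e-j}$, each of which is $b_i^{e-k}\cdot P(b_i)=0$ for the characteristic polynomial $P(x)=\prod_l(x+b_l)$ in characteristic $2$. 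Your observation that $T_k$ is free over $\M_2[b_1,\ldots,b_k]$ on square-free $a$-monomials (so the relations $a_i^2=\rho a_i+\tau b_i$ never enter) is exactly right, and the hypothesis $e\geq k$ is used in the same place in both arguments, namely to keep the exponents $e-j$ nonnegative. What your approach buys is transparency and brevity for this particular lemma; what it does not buy is reusability, since the paper's intermediate identities relate the classes $[a_1b_1^{e-j}b_2\cdots b_{j+1}]$ to one another and this style of manipulation is what the author needs repeatedly elsewhere (e.g.\ in Section~\ref{se:examples}), whereas the Cayley--Hamilton trick is special to expressions linear in the $a$'s.
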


\begin{proof}
This follows from the identities
\begin{align*}
&[a_1b_1^e]=[a_1b_1^{e-1}]\cdot [b_1] + [a_1b_1^{e-1}b_2] \\
&[a_1b_1^{e-1}b_2]=[a_1b_1^{e-2}]\cdot [b_1b_2]+[a_1b_1^{e-2}b_2b_3]\\
&\vdots
\end{align*}
We stop when the right-hand term is $[a_1b_1^{e-(k-1)}b_2\ldots b_k]$,
since in this case the monomial $b_1\cdots b_k$ is a common factor to
all the summands in the $\Sigma_k$-orbit and can be taken out:
\[ [a_1b_1^{e-(k-1)}b_2\ldots b_k]=[a_1b_1^{e-k}]\cdot [b_1\cdots
b_k]=w_1^{(e-k)}\cdot c_k.
\]
Substituting each identity into the previous one leads  to
the desired relation.
\end{proof}

Let us work through one example.  In $\Inv_3$ there is the
indecomposable $w_1^{(2)}$, and according to our above analysis its
square is
\begin{myequation}
\label{eq:newt}
\quad\qquad \Bigl [w_1^{(2)}\Bigr ]^2=\rho w_1^{(4)} + \tau[b_1^5]=\rho
w_1^{(4)} + \tau[c_1^5 + c_1c_2^2+ c_1^2c_3 + c_1^3c_2+c_2c_3].
\end{myequation}
The latter expression comes from working out the appropriate Newton polynomial.
For the $w_1^{(4)}$ term we have
\[
w_1^{(4)}  =w_1^{(3)}c_1 + w_1^{(2)}c_2 + w_1^{(1)}c_3
= \bigl [ w_1^{(2)}c_1 + w_1^{(1)}c_2 + w_1 c_3\bigr ] c_1 +
w_1^{(2)}c_2 + w_1^{(1)}c_3 
\]
by two applications of Lemma~\ref{le:w1e}.  Our final relation is
\[ \Bigl[ w_1^{(2)}\Bigr ]^2 = \rho \bigl [ w_1^{(2)}(c_1^2 +c_2) +
w_1^{(1)}(c_1c_2+c_3) + w_1 c_1c_3 \bigr ] + \tau
[c_1^5 + c_1c_2^2+ c_1^2c_3 + c_1^3c_2+c_2c_3].
\]
This gives a fair indication of the level of awkwardness to this approach.

\subsection{The stable case}
The ring of invariants $\Inv_k$ will typically require many relations
beyond just those for the squares on the $w$-classes---see the
examples in Section~\ref{se:examples}.  However, things become simpler
in the stable case $k\ra\infty$.  We describe this next.

Recall that $T_k=\M_2[a_1,\ldots,a_k,b_1,\ldots,b_k]/(a_i^2=\rho
a_i+\tau b_i)$.  The map $T_{k+1}\ra T_k$ that sends $a_{k+1}$ and
$b_{k+1}$ to $0$ induces a surjection $\Inv_{k+1}\ra \Inv_k$ which is
an isomorphism in topological degrees less than $k+1$ (the latter is
immediate from looking at the standard free bases over $\M_2$).  Write
$\Inv_\infty$ for the inverse limit of
\[ \cdots \fib \Inv_3 \fib \Inv_2 \fib \Inv_1
\]
From Theorem~\ref{th:indecomp} it follows that the indecomposables of
this ring are the classes $c_j$ for $1\leq j$ and the classes
$w_{2^i}^{(e)}$ for $0\leq i$ and $0\leq e$.  

\begin{prop}
\label{pr:stable}
There exist a collection of polynomials $R_{i,e}$ such that
$\Inv_\infty$ is the quotient of $\M_2[c_j,w_i^{(e)}\,|\,i,j,e\in
\Z_{\geq 0}]$ by the relations
\[ \Bigl [ w_i^{(e)} \Bigr ]^2 = R_{i,e}.
\]
\end{prop}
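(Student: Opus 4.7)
The plan is to construct a surjection from a polynomial ring onto $\Inv_\infty$, identify the kernel, and verify it is generated precisely by the squaring relations. First I set $A = \M_2[c_j, w_i^{(e)} : j\geq 1,\ i\geq 1,\ e\geq 0]$ and define $\phi\colon A \to \Inv_\infty$ sending each generator to the correspondingly-named invariant. To prove surjectivity of $\phi$ I would argue inductively on the structure of the standard orbit-sum basis: every element $[a_1^{\epsilon_1}\cdots a_k^{\epsilon_k}b_1^{d_1}\cdots b_k^{d_k}]$ can be peeled apart using product identities of the sort exhibited in Section~\ref{se:mult} (e.g., $w_i\cdot c_j$ expands into simpler orbit sums plus ``cross'' orbits such as $\wc_{i,j}$, which in turn rewrite in terms of the named generators), reducing inductively to orbits already shown to lie in the image.

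To define $R_{i,e}$, compute $[w_i^{(e)}]^2$ directly. In characteristic $2$, squaring the orbit sum $[a_1\cdots a_i b_1^e\cdots b_i^e]$ gives the sum over $i$-element index sets of $\prod_l a_{j_l}^2 b_{j_l}^{2e}$; replacing each $a_{j_l}^2$ by $\rho a_{j_l}+\tau b_{j_l}$ expands this as an $\M_2$-linear combination of orbit sums, all divisible by $(\rho,\tau)$. By surjectivity each such orbit is a polynomial in the generators, so the whole expression is a polynomial $R_{i,e}$ in the generators (as illustrated for $w_1^{(e)}$ in equation~(\ref{eq:w-square})). Setting $I=\langle [w_i^{(e)}]^2-R_{i,e}\rangle\subseteq A$, the map $\phi$ factors through $Q=A/I$, and it remains to prove the induced map $\bar\phi\colon Q\to\Inv_\infty$ is injective.

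Using the squaring relations, every element of $Q$ has a normal form as an $\M_2$-linear combination of monomials $\prod_j c_j^{a_j}\cdot\prod_{(i,e)\in S}w_i^{(e)}$ with $S$ a finite subset of $\Z_{\geq 1}\times\Z_{\geq 0}$. To show these span $Q$ freely over $\M_2$ and map to a basis of $\Inv_\infty$, I would reduce modulo $(\rho,\tau)$: on the one hand $Q/(\rho,\tau)Q$ is spanned by the reductions of the normal-form monomials; on the other, $\Inv_\infty/(\rho,\tau)\Inv_\infty$ agrees with the invariant ring of $\Sigma_\infty$ acting on $\Lambda_{\F_2}(a_j)\tens_{\F_2}\F_2[b_j]$, whose structure is exactly the subject of the appendix. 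The appendix's description identifies $Q/(\rho,\tau)Q\to\Inv_\infty/(\rho,\tau)\Inv_\infty$ as an isomorphism; lifting this to a bigraded isomorphism $Q\xrightarrow{\cong}\Inv_\infty$ uses the freeness of both sides as $\M_2$-modules together with a bidegree-by-bidegree rank comparison, in the spirit of the arguments of Section~\ref{se:proof}.

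The hard part will be the mod-$(\rho,\tau)$ rank identification itself: showing that in each bidegree the normal-form monomials in $Q/(\rho,\tau)Q$ are in bijection with an $\F_2$-basis of the exterior-polynomial invariant ring from the appendix. This is purely combinatorial but requires the careful accounting of the ``overlap'' structure of the $w_i^{(e)}$ generators carried out there. A secondary subtlety is that $\Inv_\infty$ is not finitely generated over $\M_2$, so the passage from an isomorphism mod $(\rho,\tau)$ to an isomorphism of bigraded free modules must be handled one bidegree at a time rather than by a global Nakayama-style argument.
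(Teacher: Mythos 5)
Your proposal is correct and follows essentially the same route as the paper: construct the relations $R_{i,e}$ by expanding $[a_1^2\cdots a_i^2 b_1^{2e}\cdots b_i^{2e}]$, pass to the surjection $\M_2[c_j,w_i^{(e)}]/(R_{i,e})\fib \Inv_\infty$, observe that the square-free monomials give a free $\M_2$-basis of the source, and conclude by matching bigraded ranks against the appendix's identification $L_\infty\iso \Lambda(w_i^{(e)})\tens\F_2[c_1,c_2,\ldots]$ (Theorem~\ref{th:En}(c)), which is exactly your mod-$(\rho,\tau)$ comparison. The ``hard part'' you flag is precisely what Proposition~\ref{pr:stable-iso} in the appendix already supplies, so no new work is needed there.
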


\begin{remark}
Unfortunately the polynomials $R_{i,e}$ seem cumbersome to work out in
general.  We saw in (\ref{eq:newt}) that $R_{1,e}=\rho
w_1^{(2e)}+\tau[N_{2e+1}(c_1,\ldots))]$
where $N_{2e+1}$ is the mod $2$ Newton polynomial for writing the
$(2e+1)$-power sum as a polynomial in the elementary symmetric
functions.
The polynomial $R_{2,e}$ is more unpleasant; it has the form
\[ R_{2,e}=\rho^2 w_2^{(2e)} + \rho\tau \Bigl [ w_1^{(4e+1)} +
w_1^{(2e)} N_{2e+1}(c_1,\ldots) \Bigr ] + \tau^2[b_1^{2e+1}b_2^{2e+1}]
\]
where the expression  $[b_1^{2e+1}b_2^{2e+1}]$
must be replaced by a certain complicated, Newton-like polynomial in the Chern
classes.
\end{remark}

\begin{proof}[Proof of Proposition~\ref{pr:stable}]
We let $R_{i,e}$ be the polynomials constructed as in
Section~\ref{se:relations}---it is clear enough that they exist, it is
just not clear how to write down their coefficients in a reasonable way.
Consider the surjection
\[ \M_2[c_j,w_i^{(e)}\,|\,i,j,e\in
\Z_{\geq 0}]/(R_{i,e}) \fib \Inv_\infty.
\]
We claim that the bigraded Poincar\'e series for these two algebras are
identical, and from this it immediately follows that the map is an
isomorphism.  Both the domain and target are free $\M_2$-modules, so
it suffices to instead look at the bigraded rank functions.  

The domain has a free $\M_2$-basis consisting of
monomials in the variables $c_j$ and $w_i^{(e)}$ that are square-free
in the $w$-classes.  So the bigraded rank function is the same as for the
algebra
\[ \Lambda(w_i^{(e)}\,|\,0\leq i,0\leq e)\tens \F_2[c_1,c_2,\ldots].
\]
Likewise, the bigraded rank function for $\Inv_\infty$ is the same as
the Poincar\'e series for the algebra $L_\infty$ from Appendix A ($L_\infty$
is just the quotient of $\Inv_\infty$ obtained by killing $\rho$ and
$\tau$).  
But Theorem~\ref{th:En}(c) gives the isomorphism of graded rings $L_\infty\iso
\Lambda(w_i^{(e)}\,|\,0\leq i,0\leq e)\tens \F_2[c_1,c_2,\ldots]$, so
this completes the proof.
\end{proof}

\section{Examples}
\label{se:examples}

Our purpose in this section is to take a close look at
$H^{*,*}(\Gr_2(\cU);\Z/2)$ and $H^{*,*}(\Gr_3(\cU);\Z/2)$, to
demonstrate our general results.  We also make some remarks about
$H^{*,*}(\Gr_4(\cU);\Z/2)$.  

\medskip

Write $\M_2[\und{c}]\subseteq H^{*,*}(\Gr_k(\cU);\Z/2)$ for the
$\M_2$-subalgebra generated by the $c_i$'s.  We have seen that the rank chart
for the cohomology ring breaks up naturally into lines of slope
$\frac{1}{2}$, and it will be convenient to consider a corresponding
decomposition at the level of algebra.  To this end, let 
$F_i\subseteq H^{*,*}(\Gr_k(\cU);\Z/2)$ be the $\M_2$-submodule
spanned by the elements of our standard basis having degrees $(p,q)$
for
$0\leq 2q-p\leq i$.  Note that $F_0=\M_2[\und{c}]$, and in general 
$F_i$ is an $\M_2[\und{c}]$-module.
 Let $Q_i=F_i/F_{i-1}$, and call this module the
``$i$-line''.  It is a free $\M_2$-module, and the ranks correspond to
the ranks of $H^{*,*}(\Gr_k(\cU);\Z/2)$ occuring along the line of
slope $\frac{1}{2}$ that passes through $(i,i)$.   The $0$-line is
simply $\M_2[\und{c}]$.  The duality given by
Corollary~\ref{co:duality} says that the ranks along the $i$-line and
the $(k-i)$-line are the same, for every $i$.  

We take the perspective that the $0$-line is completely understood, as
this is just the polynomial ring over $\M_2$ on the classes
$c_1,c_2,\ldots,c_k$.  In some sense we then also understand the
$k$-line, by duality.  Our next observation is that we can also understand
the $1$-line (and therefore the $(k-1)$-line along with it).

\begin{lemma}
\label{le:1-line}
Let $X=H^{*,*}(\Gr_k(\cU);\Z/2)$.  Then 
\[ \rank^{2p+1,p+1}(X)=\rank^{2p,p}(X)+\rank^{2p-2,p-1}(X) + \cdots +
\rank^{2p-2(k-1),p-(k-1)}(X)
\]
for any $p\in \Z$.
\end{lemma}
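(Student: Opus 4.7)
The plan is to reduce the claim to a combinatorial identity about partitions and then prove that identity by telescoping. By Proposition~\ref{pr:Y->Z} we have $\rank^{p,q}(X)=\rank^{p,q}(\Inv_k)$, and by Proposition~\ref{pr:rank-Inv} the latter equals $\prt{p}{k}{2q-p}$. Let $q(n,r)$ denote the number of partitions of $n$ into at most $r$ parts. A partition counted by $\prt{2p-2j}{k}{0}$ has all $k$ (nonnegative) parts even, so upon halving each part it corresponds to a partition of $p-j$ into at most $k$ parts; hence $\rank^{2p-2j,p-j}(X)=q(p-j,k)$. A partition counted by $\prt{2p+1}{k}{1}$ contains a unique odd part $2m+1$ with $m\geq 0$, and the remaining $k-1$ even parts sum to $2p-2m$, so they correspond to a partition of $p-m$ into at most $k-1$ parts. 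Stratifying by $m$ gives
\[ \rank^{2p+1,p+1}(X)=\prt{2p+1}{k}{1}=\sum_{m=0}^{p}q(p-m,k-1)=\sum_{j=0}^{p}q(j,k-1). \]
The lemma therefore reduces to the identity $\sum_{j=0}^{p}q(j,k-1)=\sum_{j=0}^{k-1}q(p-j,k)$.

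To prove this identity I would invoke the standard recursion $q(n,k)=q(n,k-1)+q(n-k,k)$, which follows by splitting partitions with at most $k$ parts according to whether they have fewer than $k$ parts or exactly $k$ positive parts (in the latter case, subtracting $1$ from each part gives a partition of $n-k$ into at most $k$ parts). Rewriting this as $q(n,k-1)=q(n,k)-q(n-k,k)$ and summing over $n=0,1,\dots,p$ yields a telescoping sum
\[ \sum_{j=0}^{p}q(j,k-1)=\sum_{j=0}^{p}q(j,k)-\sum_{j=0}^{p-k}q(j,k)=\sum_{j=p-k+1}^{p}q(j,k), \]
which equals $\sum_{j=0}^{k-1}q(p-j,k)$ after reindexing $j\mapsto p-j$.

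The only real obstacle is bookkeeping in the translation step: one must be careful about the difference between partitions of $n$ into exactly $k$ nonnegative integers, partitions into at most $k$ positive parts, and partitions with parts of size at most $k$ (these are interchanged freely by halving parts, by dropping zeros, and by conjugation). Once the statement is rephrased in terms of $q(n,k)$, the identity itself is completely standard. A more conceptual explanation, which I would mention as a remark but not pursue, is that the $1$-line $Q_1$ ought to be free as a module over the $0$-line $\M_2[\und c]$ on the generators $w_1^{(0)}, w_1^{(1)},\ldots, w_1^{(k-1)}$ (whose bidegrees $(2e+1,e+1)$ for $0\leq e\leq k-1$ exactly account for the RHS shifts); proving this freeness directly would give the lemma without any combinatorial manipulation, but verifying it requires separately checking that these products span all basis monomials on the 1-line, which is not obviously easier than the argument above.
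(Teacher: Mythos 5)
Your proof is correct, and it handles the combinatorial core differently from the paper. Both arguments begin the same way: using Proposition~\ref{pr:rank-Inv} (together with the identification of $\rank(X)$ with $\rank(\Inv_k)$) to restate the lemma as the identity $\partition_{2p+1,\leq k}[1]=\sum_{i=0}^{k-1}\partition_{2p-2i,\leq k}[0]$. The paper then sketches a direct bijection: given a partition of $2p-2i$ into $k$ even nonnegative pieces, add $2i+1$ to the $i$th smallest piece to produce a partition of $2p+1$ with exactly one odd piece; the verification that this is a bijection is left to the reader. You instead halve the even parts to translate both sides into the standard counting function $q(n,r)$, obtaining $\sum_{j=0}^{p}q(j,k-1)$ on the left and $\sum_{j=0}^{k-1}q(p-j,k)$ on the right, and then close the gap with the classical recursion $q(n,k)=q(n,k-1)+q(n-k,k)$ and a telescoping sum. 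Your stratification of $\prt{2p+1}{k}{1}$ by the value of the unique odd part is clean and complete, and the telescoping step is airtight, so in some sense your write-up carries more of the burden of proof than the paper's sketch. What the paper's bijection buys is a single explicit correspondence with no intermediate reindexing; what your route buys is that the only nontrivial combinatorial input is a textbook identity, so nothing needs to be "left to the reader." Your closing remark about the module-theoretic interpretation via the $1$-line being free over $\M_2[\und{c}]$ is also apt: in the paper that freeness is deduced \emph{from} this lemma, so proving it independently would not be a shortcut.
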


\begin{proof}
We change this into a statement about partitions, using
Proposition~\ref{pr:rank-Inv}.  The claim is that
\[ \partition_{2p+1,\leq k}[1]=\sum_{i=0}^{k-1} \partition_{2p-2i,\leq k}[0].
\]
We sketch a bijective proof of this.  Regard a partition with at most
$k$ pieces as a partition having exactly $k$ pieces, but where some
pieces are $0$.  Given a partition of $2p$ into $k$ pieces that are
all even, make a partition of $2p+1$
by adding $1$ to the smallest piece.  Given a partition of $2p-1$ into
$k$ pieces that are all even, make a partition of $2p+1$ by adding $3$
to the second smallest piece.  And so on: given an element of
$\partition_{2p-2i,\leq k}[0]$, make a partition of $2p+1$ by adding
$2i+1$ to the $i$th smallest piece.  We leave it to the reader to
check that this does indeed give the desired bijection.
\end{proof}

\begin{prop}
The $1$-line $Q_1$ is a free $\M_2[\und{c}]$-module generated by the
classes $w_1^{(e)}$ for $0\leq e\leq k-1$.  
\end{prop}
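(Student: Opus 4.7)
The plan is to construct a bigraded $\M_2[\und c]$-module map
\[\phi \colon \bigoplus_{e=0}^{k-1} \M_2[\und c]\cdot \nu_e \lra Q_1,\qquad \nu_e\mapsto w_1^{(e)},\]
where $\nu_e$ is a formal generator of bidegree $(2e+1,e+1)$, and to prove $\phi$ is an isomorphism by combining a surjectivity argument with a bigraded rank count. The source is manifestly free over $\M_2[\und c]$ on the $\nu_e$, so once $\phi$ is shown to be an isomorphism the desired structure of $Q_1$ follows.

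I would deduce surjectivity from Theorem~\ref{th:indecomp}(a), which exhibits $H^{*,*}(\Gr_k(\cU);\Z/2)$ as an $\M_2$-algebra generated by the Chern classes $c_j$ (on the $0$-line) together with the classes $w_{2^i}^{(e)}$ for $1\leq 2^i\leq k$ and $0\leq e\leq \tfrac{k}{2^i}-1$ (each on the $2^i$-line). Since $2q-p$ is additive under multiplication, vanishes on the $c_j$, and equals $2^i\geq 1$ on $w_{2^i}^{(e)}$, any monomial in these generators whose bidegree lies on the $1$-line must contain exactly one non-Chern factor, and that factor is forced to be some $w_1^{(e)}$ with $0\leq e\leq k-1$. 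Thus the products $w_1^{(e)}\cdot c^{\alpha}$ span $Q_1$ over $\M_2$, which is precisely the image of $\phi$.

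For the rank count I would first note that $F_0=\M_2[\und c]$: the standard basis elements of $\Inv_k$ on the $0$-line are exactly the orbit sums $[b_1^{d_1}\cdots b_k^{d_k}]$, and these are the monomials in the $c_j$'s. Then in bidegree $(2p+1,p+1)$ the source of $\phi$ has $\M_2$-rank
\[\sum_{e=0}^{k-1}\rank^{2(p-e),p-e}(\M_2[\und c])=\sum_{i=0}^{k-1}\rank^{2p-2i,p-i}\bigl(H^{*,*}(\Gr_k(\cU);\Z/2)\bigr),\]
which by Lemma~\ref{le:1-line} equals $\rank^{2p+1,p+1}(Q_1)$. A surjection between free $\M_2$-modules of finite type whose bigraded ranks agree in every bidegree is automatically an isomorphism, completing the argument.

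The main (rather mild) obstacle is the bookkeeping with the invariant $2q-p$ needed to isolate the $w_1^{(e)}$ as the only indecomposables that can contribute to $Q_1$; once this is granted, nothing subtle remains. In particular, one does not need to explicitly expand a generic $1$-line basis element $[a_1b_1^{d_1}\cdots b_k^{d_k}]$ as an $\M_2[\und c]$-combination of the $w_1^{(e)}$, nor does one need Lemma~\ref{le:w1e} here---the rank count does all the heavy lifting.
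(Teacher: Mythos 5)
Your proposal is correct and follows essentially the same route as the paper: surjectivity is deduced from the generating set in Theorem~\ref{th:indecomp} via the additivity of $2q-p$ (which forces any generator-monomial contributing to the $1$-line to be a single $w_1^{(e)}$ times Chern classes), and the surjection is then upgraded to an isomorphism by matching bigraded ranks using Lemma~\ref{le:1-line}. The only difference is that you spell out the degree bookkeeping and the identification $F_0=\M_2[\und{c}]$ more explicitly than the paper does.
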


\begin{proof}
We have the evident map 
\begin{myequation}
\label{eq:map} 
\M_2[\und{c}]\langle
w_1,w_1^{(1)},\ldots,w_1^{(k-1)}\rangle \ra Q_1.
\end{myequation}
Theorem~\ref{th:indecomp} says that $X$ is generated as an $\M_2$-algebra by
products of elements $c_i$ and $w_j^{(e)}$.  The only such products
that can lie on the $1$-line are products of $c_i$'s with
$w_1^{(e)}$'s.  This shows that the map in (\ref{eq:map}) is surjective.
But Lemma~\ref{le:1-line} shows that the ranks of the domain and
target of (\ref{eq:map}) coincide, hence the map must be an isomorphism.
\end{proof}

In the cohomology of $\Gr_2(\cU)$ we only have the $0$-line, $1$-line,
and $2$-line, and the outer two are dual---so we basically understand
everything.  In $\Gr_3(\cU)$ we have the $0$-line/$3$-line and the
$1$-line/$2$-line, and again we understand everything.  This is why
these two cases are fairly easy.  When we get to $\Gr_4(\cU)$ things
become more complicated.  

Let us now look in detail at $\Gr_2(\cU)$.  The rank calculations can
be done by counting partitions using Proposition~\ref{pr:rank-Inv},
and this is very easy.  One finds 
\[ \rank^{2p,p}=\rank^{2p+2,p+2}=\begin{cases} \frac{p}{2}+1 &
\text{if $p$ is even}, \\
\frac{p+1}{2} & \text{if $p$ is odd,}
\end{cases}
\]
and
\[ \rank^{2p+1,p+1}=p+1.
\]
By Theorem~\ref{th:indecomp} the indecomposables are the following elements:
\[ c_1,\ c_2,\ w_1,\ w_1^{(1)}, \ w_2.
\]
The $1$-line is a free $\M_2[\und{c}]$-module generated by $w_1$ and
$w_1^{(1)}$, and the rank calculations suggest that the $2$-line is 
the free $\M_2[\und{c}]$-module generated by $w_2$.  So we guess that
the three classes $w_1$, $w_1^{(1)}$ and $w_2$ span the cohomology as
an $\M_2[\und{c}]$-module.  If this is true, there will be relations
specifying the products of any two of the $w$-classes.  A little work
shows that
\begin{align*}
& w_1^2=\rho w_1+\tau c_1, \qquad w_2^2=\rho^2 w_2 + \rho \tau 
\bigl (w_1c_1+w_1^{(1)}\bigr ) + \tau^2 c_2\\
& \Bigl [w_1^{(1)}\Bigr ]^2=\rho\bigl (w_1^{(1)}c_1 +
w_1c_2\bigr )+\tau(c_1^3+c_1c_2)\\
\end{align*}
and also that
\begin{align*}
& w_1w_2=\rho w_2 + \tau\bigl (w_1c_1+w_1^{(1)}\bigr ) \\
& w_1w_1^{(1)}=\rho w_1^{(1)}+\tau c_1^2 +w_2c_1 \\
& w_2 w_1^{(1)}=\rho w_2 c_1 + \tau (w_1c_1^2+w_1^{(1)}c_1+w_1c_2).
\end{align*}
We have separated the relations into two classes: the relations for
the squares of the $w$-classes will always be present, but the
relations amongst square-free monomials in the $w$-classes depend very
much on the value of $k$.

Once these relations have been verified, we have a surjective algebra map
\[ \M_2[c_1,c_2,w_1,w_1^{(1)},w_2]/(R) \fib H^{*,*}(\Gr_2(\cU);\Z/2)
\]
where $R$ is the above list of relations.  As an
$\M_2[\und{c}]$-module the domain is free with generators $1$, $w_1$,
$w_1^{(1)}$, and $w_2$, and our rank calculations then show that the
Poincar\'e series for the domain and target agree.  So the above map
must be an isomorphism.

It remains to verify the relations listed above.  The ones for the
squares of $w_1$
and $w_1^{(1)}$ follow readily from (\ref{eq:w-square}) and
Lemma~\ref{le:w1e}.
For $w_2^2$ we write
\begin{align*} 
[a_1a_2]^2=[a_1^2a_2^2]=[(\rho a_1+\tau b_1)(\rho a_2+\tau
b_2)] & =\rho^2[a_1a_2] + \rho\tau [a_1b_2] + \tau^2 [b_1^2] \\
&= \rho^2 w_2
+ \rho\tau \bigl ( [a_1][b_1]+[a_1b_1]\bigr ) +\tau^2 c_1^2.
\end{align*}
Of the remaining three relations, we leave the first two to the
reader and only verify the last:
\begin{align*} [a_1a_2]\cdot [a_1b_1]= [a_1^2a_2b_1] =
\rho[a_1a_2b_1]+\tau[a_1b_2^2]
&= \rho [a_1a_2][b_1] + \tau \bigl ( [a_1][b_1^2]+[a_1b_1^2]  \bigr
)\\
&= \rho w_2c_1 + \tau \bigl ( w_1c_1^2 + w_1^{(2)}\bigr ).
\end{align*}
Now use Lemma~\ref{le:w1e} to decompose $w_1^{(2)}$.  

\vspace{0.1in}

Next let us look at the cohomology of $\Gr_3(\cU)$.  The
indecomposables are
\[ c_1, \ c_2,\ c_3,\ w_1,\ w_1^{(1)},\ w_1^{(2)},\ w_2,
\]
and the $1$-line is generated over $\M_2[\und{c}]$ by $w_1$,
$w_1^{(1)}$, and $w_1^{(2)}$.  
The evident elements of interest on the $2$-line are
\[ w_2, \ w_1\cdot w_1^{(1)}, \ w_1\cdot w_1^{(2)}.\]
Duality between the $1$-line and $2$-line suggests that we will have
three generators as an $\M_2[\und{c}]$-module, and since these are the
only candidates there is not much choice for what can happen.
Finally, we expect by duality that the $3$-line is the free
$\M_2[\und{c}]$-module generated by $w_1w_2$.  This gives a
conjectural description of the cohomology as a module over
$\M_2[\und{c}]$, which we will soon see is correct.  

The guess suggests that we should have relations for the products
$w_2\cdot w_1^{(1)}$, $w_2\cdot w_1^{(2)}$, and $w_1^{(1)}\cdot
w_1^{(2)}$---as well as for the squares of all the $w$-classes, of
course.
Some tedious work in the ring of invariants reveals the following relations:
\begin{align*}
& w_1^2=\rho w_1+\tau c_1 \\
& w_2^2=\rho^2 w_2 + \rho\tau(w_1c_1+w_1^{(1)})+\tau^2 c_2 \\
& \Bigl [ w_1^{(1)}\Bigr ]^2= \rho w_1^{(2)}+ \tau \bigl [ c_1^3 +
c_1c_2+c_3\bigr ]\\
& \Bigl [ w_1^{(2)}\Bigr ]^2= \rho \bigl [
w_1^{(2)}c_1^2 + w_1^{(1)}c_1c_2+w_1c_1c_3+w_1^{(2)}c_2+w_1^{(1)}c_3
\bigl ]\\
&\qquad\qquad\qquad\qquad\qquad\qquad + \tau\bigl [
c_1^5+c_1^3c_2+c_1^2c_3+c_1c_2^2+c_2c_3\bigr ]
\end{align*}
and
\begin{align*}
& w_2\cdot w_1^{(1)}=
w_1w_2c_1+(\rho,\tau)\\
& w_2\cdot w_1^{(2)}= 
w_1w_2c_1^2+(\rho,\tau) \\
%
& w_1^{(1)}\cdot
w_1^{(2)}=w_2c_3+w_2c_1c_2+w_1w_1^{(1)}c_1^2+w_1w_1^{(2)}c_1
+(\rho,\tau).
\end{align*}
In the last three cases  we are being somewhat lazy and not writing out the entire
relations, which are long and complicated.  We have instead written
``$(\rho,\tau)$'' as shorthand for all terms belonging to the ideal
$(\rho,\tau)$.  

Once again, we have now produced a surjective map
\[ \M_2[c_1,c_2,w_1,w_1^{(1)},w_1^{(2)},w_2]/(R) \fib H^{*,*}(\Gr_3(\cU);\Z/2)
\]
where $R$ is the set of relations above.  The domain is a free
$\M_2[\und{c}]$-module generated by
$1,w_1,w_1^{(1)},w_1^{(2)},w_2, w_1\cdot w_1^{(1)}, w_1\cdot
w_1^{(2)},w_1w_2$.  One can analyze the Poincar\'e series for the
cohomology ring in terms of partitions, and a little work shows that
the Poincar\'e series of the domain and codomain agree.  It follows
that the 
above map is an isomorphism of algebras.

\medskip
Finally, we make some brief remarks about $\Gr_4(\cU)$.  The
indecomposables are
\[ c_1,\ c_2,\ c_3,\ c_4,\  w_1,\ w_1^{(1)}, \ w_1^{(2)}, \
w_1^{(3)},\ w_2, \ w_2^{(1)}, \ w_4.
\]
The $0$-line is the polynomial algebra $\M_2[c_1,c_2,c_3,c_4]$, and
the $1$-line is the free $\M_2[\und{c}]$-module with basis elements
$w_1^{(e)}$ for $0\leq e\leq 3$.  The monomials on the $2$-line are
\[ w_2,\ w_1w_1^{(1)},\ w_1w_1^{(2)},\ w_1w_1^{(3)},\ w_2^{(1)},\
w_1^{(1)}\cdot w_1^{(2)},\ w_1^{(1)}\cdot w_1^{(3)},\ w_1^{(2)}\cdot
w_1^{(3)},
\]
with bidegrees 
\[ (2,2), \ (4,3),\  (6,4),\  (8,5),\  (6,4),\  (8,5),\ 
(10,6),\  (12,7).
\]  
The ranks along the $0$-line constitute the sequence
$S=(1,1,2,3,5,6,9,11,\ldots)$.  If the $2$-line were free on the above
generators then the ranks along the $2$-line would be
$P=(1,2,5,9,15,23,34,47,\ldots)$.  This sequence is obtained by adding
up eight copies of $S$ with appropriate shifts, according to the
topological degrees of the eight monomials listed above: $P=\sum_i
(\Sigma^{(p_i-2)/2}S)$ where $p_i$ is the topological degree of the $i$th
element of the list (we subtract two because our $2$-line ``starts''
at $w_2$).  That is,
\[ P=S+\Sigma S + \Sigma^2 S+\Sigma^2 S + \Sigma^3 S+\Sigma^3
S+\Sigma^4 S+\Sigma^5 S.
\]
Computations with partitions reveals that the actual rank sequence for
the $2$-line is $(1,2,5,8,14,20,30,40,55,\ldots)$.  Playing around
with the numerology shows that removing a $\Sigma^3S$ and the
$\Sigma^5 S$ from $P$ seems to yield the correct answer; this leads to the guess that there is a dependence relation
amongst the two elements $w_1w_1^{(3)}$ and $w_1^{(1)}w_1^{(2)}$, and
also that there should be a relation for $w_1^{(2)}w_1^{(3)}$.  One
can indeed find such relations, although the process is
time-consuming.  In the first case the relation is
\[
w_1w_1^{(3)}+w_1^{(1)}w_1^{(2)}+w_1w_1^{(2)}c_1+w_2^{(1)}c_1+w_2c_3+w_1w_1^{(1)}c_2+(\rho,\tau)=0
\]
where the last term represents an element in the ideal $(\rho,\tau)$
that we have not gone to the trouble of determining.

It again appears that the cohomology of $\Gr_4(\cU)$ is free as a
module over $\M_2[\und{c}]$, with basis consisting of certain products
of $w$-classes.  However, there does not seem to be a canonical
choice for the basis: e.g., there is no preferred choice among $w_1w_1^{(3)}$ and
$w_1^{(1)}w_1^{(2)}$ for which to include.  Also, the relations are getting truly
horrendous.  We choose to stop here.  

\section{Connections to motivic phenomena}
\label{se:motivic}

Let $F$ be a field, not of characteristic $2$.  For an algebraic
variety $X$ over $F$, a \dfn{quadratic
bundle} over $X$ is an algebraic vector bundle $E\ra X$ together with
a pairing $E\tens_F E \ra \cO_X$ that is symmetric and restricts to
nondegenerate bilinear forms on each fiber.  
For reasons that we will not explain here, such bundles play the role
in motivic homotopy theory that ordinary real vector bundles play in
classical algebraic topology (see Remark~\ref{re:quadratic=real} below
for a bit more information).  It is natural, therefore, to try to
understand characteristic classes for quadratic bundles with values in
mod $2$ motivic cohomology.  

One can make a guess at a classifying space for quadratic vector
bundles, as follows (this is known to be a true classifying
space if one works stably, by a result of \cite{ST}).  Equip the
affine space $\A^{2n}$ with the quadratic form
\[ q_{2n}(x_1,y_1,x_2,y_2,\ldots,x_n,y_n)=x_1y_1+\cdots+x_ny_n \]
and equip $\A^{2n+1}$ with the quadratic form
\[
q_{2n+1}(x_1,y_1,x_2,y_2,\ldots,x_n,y_n,z)=x_1y_1+\cdots+x_ny_n+z^2. 
\]
These are called the \dfn{split} quadratic forms.  Note that we have
$\A^{2n}$ sitting inside $\A^{2n+1}$ as the $z=0$ subspace, which
exhibits $q_{2n}$ as the restriction of $q_{2n+1}$.  We will also
regard $\A^{2n+1}$ as sitting inside $\A^{2n+2}$ as the subspace
$x_{n+1}=y_{n+1}$, which exhibits $q_{2n+1}$ as the restriction of
$q_{2n+2}$.

From now on we will write $(\A^N,q)$ for either $(\A^{2n},q_{2n})$ or
$(\A^{2n+1},q_{2n+1})$.  Note that we have a series of inclusions
\[  (\A^1,q) \inc (\A^2,q) \inc (\A^3,q)\inc \cdots
\]

Define the \dfn{orthogonal Grassmannian} $\OGr_k(\A^N)$ to be the
Zariski open subspace of $\Gr_k(\A^N)$ consisting of the $k$-planes
where $q$ restricts to a nondegenerate form.  Taking the colimit over
$N$ gives a motivic space $\OGr_k(\A^\infty)$, in the sense of
\cite{MV}.  It is an interesting (and unsolved) problem to compute the
motivic cohomology groups of this space.

Now restrict to the case $F=\R$.  From an $\R$-variety $X$ we can
consider the set $X(\C)$ of $\C$-valued points, regarded as a
topological space via the analytic
topology.  This space
has an evident $\Z/2$-action given by complex conjugation, and the
assignment $X\mapsto X(\C)$ extends to a map of homotopy theories from
motivic homotopy theory over $\R$ to $\Z/2$-equivariant homotopy
theory.  Our goal in this section is only to note the following
result:

\begin{thm}
\label{th:Z/2}
There is an equivariant weak homotopy equivalence
\[ [\OGr_k(\A^{N})](\C) \
\he \Gr_k(\cU^N).
\]
(Recall that $\cU^N$ denotes the first
$N$ summands of the infinite $\Z/2$-representation $\cU=\R\oplus \R_-\oplus \R
\oplus \R_- \oplus \cdots$).
\end{thm}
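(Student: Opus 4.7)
The plan is to construct an explicit $\Z/2$-equivariant map $F\colon \Gr_k(\cU^N)\to [\OGr_k(\A^N)](\C)$ and verify it is a weak equivalence both on the total spaces and on the $\Z/2$-fixed point sets; this suffices for equivariant weak equivalence since $\Z/2$ has only two subgroups.

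First I would realize $\cU^N$ as a totally real subspace of $\C^N$ on which the complex-bilinear extension of the split form restricts to a positive-definite real form. Letting $e_1,\ldots,e_N$ denote the standard basis of $\C^N$, define an $\R$-linear embedding $\iota\colon\cU^N\hookrightarrow \C^N$ sending the $i$-th $\R$-summand to $\R\cdot (e_{2i-1}+e_{2i})/\sqrt{2}$ and the $i$-th $\R_-$-summand to $\R\cdot i(e_{2i-1}-e_{2i})/\sqrt{2}$ (with an extra summand $\R\cdot e_N$ when $N$ is odd).  A short computation shows that (i) complex conjugation in $\C^N$ fixes the images of the $\R$-summands and negates those of the $\R_-$-summands, so $\iota$ intertwines the $\Z/2$-actions, and (ii) the split form restricts on $\iota(\cU^N)$ to the standard positive-definite form.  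Set $F(V)=\iota(V)\otimes_\R \C \subseteq \C^N$.  Property (ii) ensures $F(V)$ lies in $[\OGr_k(\A^N)](\C)$, since complexification preserves nondegeneracy, and property (i) makes $F$ equivariant.

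Next I would establish the non-equivariant weak equivalence.  The image of $F$ is precisely the space of complex $k$-planes defined over the real form $\iota(\cU^N)\cong\R^N$ carrying the standard positive-definite form, canonically identified with $\Gr_k(\R^N)$.  The classical polar decomposition $O(N,\C)=O(N)_{\mathrm{compact}}\cdot \exp(\mathfrak{p})$ with $\exp(\mathfrak{p})$ contractible exhibits $\Gr_k(\R^N)$ as a deformation retract of $[\OGr_k(\A^N)](\C)=O(N,\C)/[O(k,\C)\times O(N-k,\C)]$, so $F$ realizes a non-equivariant weak equivalence between $\Gr_k(\cU^N)$ (which has the same underlying space $\Gr_k(\R^N)$) and $[\OGr_k(\A^N)](\C)$.

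Finally, for the fixed-point comparison, $\Gr_k(\cU^N)^{\Z/2}$ decomposes as $\bigsqcup_{p+q=k}\Gr_p(\R^a)\times \Gr_q(\R^b)$ (where $a,b$ are the multiplicities of $\R$ and $\R_-$ in $\cU^N$), while $[\OGr_k(\A^N)](\C)^{\Z/2}=\OGr_k(\R^N)$ decomposes by signature as $\bigsqcup_{p+q=k}\OGr_k^{(p,q)}(\R^{a,b})$, and each piece $\OGr_k^{(p,q)}(\R^{a,b})=O(a,b)/[O(p,q)\times O(a-p,b-q)]$ is weakly equivalent to $\Gr_p(\R^a)\times \Gr_q(\R^b)$ via the Cartan decomposition of $O(a,b)$ with maximal compact $O(a)\times O(b)$.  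A direct computation shows $F$ sends $V=V^+\oplus V^-\in\Gr_k(\cU^N)^{\Z/2}$ to the real $k$-plane $\iota(V^+)+i\iota(V^-)\subseteq \R^N$, which has signature $(p,q)$ with $p=\dim V^+$ and $q=\dim V^-$; this matches the two decompositions componentwise.  The main obstacle is verifying that each induced component map $\Gr_p(\R^a)\times \Gr_q(\R^b)\to \OGr_k^{(p,q)}(\R^{a,b})$ really is a weak equivalence and not merely a numerical match; this ultimately reduces to tracking stabilizers under the Cartan decomposition on each component, essentially running the non-equivariant polar-decomposition argument piecewise.
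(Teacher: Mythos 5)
Your proposal is correct and follows essentially the same route as the paper: your embedding $\iota$ is exactly the paper's coordinate change $\phi(x_1,y_1,\ldots)=(x_1+iy_1,x_1-iy_1,\ldots)$ converting the split form to the sum-of-squares form (up to an irrelevant factor of $\sqrt{2}$), the non-equivariant step is the paper's Proposition~\ref{pr:quad-form-nonequiv}(a), and your signature decomposition of the fixed sets together with the Cartan/maximal-compact comparison $O(a)\times O(b)\inc O(a,b)$ is precisely Proposition~\ref{pr:quad-form-nonequiv}(b) combined with the identification $\OGr_k(W,q)^{\Z/2}\iso \OGr_k(W^{\Z/2},q)$. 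The step you flag as the remaining obstacle is carried out in the paper exactly as you describe, by a map of fiber bundles of homogeneous spaces over each signature component.
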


The above theorem shows that the main problem considered in this paper
is indeed the $\Z/2$-equivariant analog of the problem of motivic
characteristic classes for quadratic bundles.

We will need a few preliminary results before giving the proof of the
theorem.  To generalize our previous definition somewhat, if $V$ is
any vector space with a quadratic form $q$ then we write $\OGr_k(V)$
for the subspace of $\Gr_k(V)$ consisting of $k$-planes $W\subseteq V$
such that $q|_W$ is nondegenerate.  Sometimes $V$ will be a real
vector space and sometimes $V$ will be a complex vector space, and in
the latter case our orthogonal Grassmannian will be the space of
complex $k$-planes on which $q$ is nondegenerate.  Usually the intent
will be clear from context.

Assume $V$ is real and the form $q$ is positive-definite.  This form
extends to give a complex quadratic form on $V\tens_\R \C$ that we
will also call $q$.  The
complexification map $c\colon \Gr_k(V) \ra \Gr_k(V\tens_\R \C)$ has
its image contained in $\OGr_k(V\tens_\R \C)$.  To see this, just
observe that if $U\subseteq V$ is any $k$-plane then there is a basis
for $U$ with respect to which $q$ looks like the sum-of-squares form.
Extending this basis to $U\tens_\R \C$ shows that $q$ is nondegenerate
here.  Similar remarks apply to show that the direct-sum map in part
(b) of the following result takes its image in $\OGr$ rather than just
$\Gr$.

Note that the following result takes place in the non-equivariant
setting:

\begin{prop} 
\label{pr:quad-form-nonequiv}
Let $V$ be a real vector space with a positive-definite
quadratic form $q$.  
\begin{enumerate}[(a)]
\item The complexification map $\Gr_k(V) \ra \OGr_k^\C(V\tens_\R \C)$ is a weak
homotopy equivalence;
\item Let $V'$ be another real vector space with positive-definite
form $q'$.  Then the direct-sum map $\coprod_{a+b=k} \Gr_a(V)\times \Gr_b(V') \ra
\OGr_k(V\oplus V',q\oplus (-q'))$ is a weak homotopy equivalence.
\end{enumerate}
\end{prop}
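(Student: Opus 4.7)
The plan is to present both Grassmannians as homogeneous spaces for orthogonal groups, and then compare the resulting fiber sequences via the Cartan decomposition.

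For (a), I will fix an orthonormal basis so that $V=\R^n$ with $q=\sum x_i^2$, and $V\tens_\R\C=\C^n$ with $q_\C=\sum z_i^2$. Every nondegenerate symmetric bilinear form over $\C$ admits an orthonormal basis, so $O(n,\C)$ acts transitively on $\OGr_k^\C(V\tens\C)$; the stabilizer of the basepoint $A_0\tens\C$ (for a chosen real $k$-plane $A_0\subseteq V$) is $O(k,\C)\times O(n-k,\C)$. This gives $\OGr_k^\C(V\tens\C) \iso O(n,\C)/(O(k,\C)\times O(n-k,\C))$, which combined with the classical $\Gr_k(V) \iso O(n)/(O(k)\times O(n-k))$ realizes the complexification map as a morphism of principal-bundle sequences whose fiber-space map is $O(k)\times O(n-k)\hookrightarrow O(k,\C)\times O(n-k,\C)$ and whose total-space map is $O(n)\hookrightarrow O(n,\C)$. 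Both of these are inclusions of maximal compact subgroups, hence homotopy equivalences by the Cartan/Iwasawa decomposition of complex reductive groups; the five lemma applied to the long exact sequences of homotopy groups then delivers a weak equivalence on the base spaces.

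For (b), I will stratify $\OGr_k(V\oplus V', q\oplus(-q'))$ by the Sylvester signature $(a,b)$ of the restricted form on $W$, so that $a+b=k$, $a\leq p:=\dim V$, and $b\leq p':=\dim V'$. Witt's extension theorem gives transitivity of $O(p,p')$ on each signature stratum, and the stabilizer of the basepoint $A_0\oplus B_0$ (with $A_0\subseteq V$ an $a$-plane and $B_0\subseteq V'$ a $b$-plane) is $O(a,b)\times O(p-a, p'-b)$. On the source side, $\Gr_a(V)\times\Gr_b(V')\iso (O(p)\times O(p'))/(O(a)\times O(p-a)\times O(b)\times O(p'-b))$, and the direct-sum map has image in the $(a,b)$-stratum. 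I then compare principal-bundle sequences exactly as in part (a), using that $O(m)\times O(n)\hookrightarrow O(m,n)$ is a maximal-compact inclusion by Cartan decomposition; the five lemma yields a weak equivalence on each stratum, and taking the disjoint union over admissible $(a,b)$ completes the proof.

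The only real obstacle is carrying out the Witt-style transitivity and stabilizer calculations carefully, especially in part (b): one must verify both that $O(a,b)\times O(p-a, p'-b)$ is the isotropy subgroup of the chosen plane in $O(p,p')$ and that its maximal compact coincides with the stabilizer coming from the product decomposition, namely $O(a)\times O(b)\times O(p-a)\times O(p'-b)$. Once that bookkeeping is in place, the remainder is a routine application of the Cartan decomposition and standard fibration comparison techniques.
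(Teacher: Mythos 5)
Your proposal is correct and follows essentially the same route as the paper: both parts are proved by exhibiting the source and target as homogeneous spaces for the compact and noncompact (or complex) orthogonal groups respectively, invoking the maximal-compact inclusions $O(n)\hookrightarrow O(n,\C)$ and $O(a)\times O(b)\hookrightarrow O(a,b)$ via the Iwasawa/Cartan decomposition, and comparing the resulting fiber bundles. The only cosmetic difference is that you cite Witt's theorem for transitivity where the paper treats it as evident.
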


\begin{proof}
Without loss of generality we may assume that $V=\R^n$ and $q$ is the
sum-of-squares form.  Recall that the symmetry group of this form is
the Lie group
 $O_n=\{A\in M_{n\times n}(\R)\,|\,
AA^T=I\}$. 
The symmetry group for the sum-of-squares form over $\C$
is
$O_n(\C)=\{A\in M_{n\times n}(\C)\,|\,
AA^T=I\}$.  Recall that
$O_n$ is a maximal compact subgroup inside of $O_n(\C)$; it is
therefore known by the
Iwasawa decomposition that the inclusion $O_n\inc O_n(\C)$  is a
homotopy equivalence (see \cite[Theorem 8.1 of Segal's lecture]{CSM}
or \cite[Chapter XV, Theorem 3.1]{H}).

The space $\Gr_k(\R^n)$ is homeomorphic to $O_n/[O_k\times O_{n-k}]$.
Likewise, $\OGr_k(\C^n)$ is homeomorphic to $O_n(\C)/[O_k(\C)\times
O_{n-k}(\C)]$.  The map in part (a) is the evident comparison map
between these homogeneous spaces.  Consider the two fiber bundles
\[ \xymatrix{
O_k\times O_{n-k} \ar[r] \ar[d] & O_n \ar[r]\ar[d] & O_n/[O_k\times
O_{n-k}]\ar[d] \\
O_k(\C)\times O_{n-k}(\C) \ar[r] & O_n(\C) \ar[r] & O_n(\C)/[O_k(\C)\times
O_{n-k}(\C)]
}
\]
(written horizontally).
The left and middle vertical maps are weak equivalences, therefore the
right map is as well.  This proves (a).

For (b) recall that a nondegenerate quadratic form on an $n$-dimensional real vector
space is classified by its signature: the pair of integers $(a,b)$ such that $a+b=n$,
representing the number of positive and negative entries in any
diagonalization of the form.  Let $O(a,b)$ be the symmetry group for
the quadratic form of signature $(a,b)$.  This Lie group contains
$O(a)\times O(b)$ in the evident way, and it is known that this is a
maximal compact subgroup.  Consequently, the inclusion $O(a)\times
O(b)\inc O(a,b)$ is a weak homotopy equivalence by the Iwasawa decomposition.  

We can assume $V=\R^n$ and $V'=\R^{n'}$, with both $q$ and $q'$ being
the sum-of-squares form.  The group $O(n,n')$ acts on $\OGr_k(V\oplus
V')$ in the evident way.  It is easy to see that the action decomposes the
orthogonal Grassmannian into a disjoint union of orbits, one for every
possible signature $(a,b)$ with $a+b=k$.  The path component
corresponding to such a signature is the homogeneous space
\[ O(n,n')/ [O(a,b)\times O(n-a,n'-b)].
\]

The map in part (b) coincides with the disjoint union of the evident maps
\[ \xymatrix{ 
\Bigl [ O(n)/[O(a)\times O(n-a)] \Bigr ]  \times \Bigl [ 
O(n')/[O(b)\times O(n'-b)]\Bigr ] \ar[d]^\iso \\
[O(n)\times O(n')]/\Bigl [[O(a)\times O(n-a)] \times [O(b)\times
O(n'-b)]\Bigr ] \ar[d]
\\ O(n,n')/ [O(a,b)\times O(n-a,n'-b)].
}
\]
At this point one proceeds exactly in the proof of part (a): write
down a map between two fiber bundles, where two of the three
maps are already known to be weak homotopy equivalences.
\end{proof}

We next move into the equivariant setting.
By an \dfn{orthogonal representation} of $\Z/2$ we mean
a pair $(V,q)$ where $V$ is a real vector space and $q\colon
 V\ra \R$ is a positive-definite quadratic form on $V$ such that
$q(\sigma x)=q(x)$ for all $x\in V$.  The main examples for
us will be where $V=\R^n$, $q$ is the standard sum-of-squares
form, and $\Z/2$ acts on $V$ by changing signs on some subset of the
standard basis elements.  

Let $V_\C=V\tens_\R \C$, with the $\Z/2$ action induced by that on
$V$. 
The
complexification  map $\Gr_k(V)\ra \OGr_k(V_\C)$ sending
$U\subseteq V$ to $U_\C \subseteq V_\C$ is
 clearly equivariant, where the $\Z/2$-actions on domain and
codomain are induced by those on $V$ and $V_\C$.

\begin{cor}
\label{co:Gr=OGr}
For any orthogonal representation $V$ of $\Z/2$, the map of
$\Z/2$-spaces $\Gr_k(V)\ra
\OGr_k(V_\C)$ is an equivariant weak equivalence.
\end{cor}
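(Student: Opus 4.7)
The plan is to verify the standard criterion for a $\Z/2$-equivariant weak equivalence: the map should induce a weak homotopy equivalence both on the underlying spaces and on $\Z/2$-fixed point sets. The first condition is immediate from Proposition~\ref{pr:quad-form-nonequiv}(a), applied to the underlying real inner product space $(V,q)$: the complexification map $\Gr_k(V)\ra \OGr_k(V_\C)$ is a non-equivariant weak equivalence.

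For the fixed points, I would first decompose $V=V^+\oplus V^-$ into the trivial and sign isotypic components. The key preliminary observation is that $\Z/2$-invariance of $q$ forces $V^+$ and $V^-$ to be $q$-orthogonal: for $v^\pm\in V^\pm$ one has $q(v^++v^-)=q(\sigma(v^++v^-))=q(v^+-v^-)$, which after polarization yields $\beta(v^+,v^-)=0$, where $\beta$ is the associated bilinear form. Since $q$ is positive-definite, the restrictions $q|_{V^+}$ and $q|_{V^-}$ are both positive-definite, and the complexified form on $V_\C$ splits as the orthogonal sum of the complexified forms on $V^+_\C$ and $V^-_\C$.

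Next I would identify the two fixed-point sets. A real $k$-plane $U\subseteq V$ is $\Z/2$-fixed iff $\sigma(U)=U$, i.e.\ iff $U=(U\cap V^+)\oplus(U\cap V^-)$, yielding
\[ \Gr_k(V)^{\Z/2}\iso \coprod_{a+b=k} \Gr_a(V^+)\times \Gr_b(V^-). \]
Similarly, a complex $k$-plane $W\subseteq V_\C$ is fixed iff it splits as $W^+\oplus W^-$ with $W^\pm\subseteq V^\pm_\C$; and by the orthogonality established above, $q|_W$ is nondegenerate iff $q|_{W^+}$ and $q|_{W^-}$ both are. Thus
\[ \OGr_k(V_\C)^{\Z/2}\iso \coprod_{a+b=k} \OGr_a(V^+_\C)\times \OGr_b(V^-_\C), \]
and one checks that the complexification map restricts on the $(a,b)$-component to the product of the two complexification maps $\Gr_a(V^+)\ra \OGr_a(V^+_\C)$ and $\Gr_b(V^-)\ra \OGr_b(V^-_\C)$. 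Each factor is a weak equivalence by another application of Proposition~\ref{pr:quad-form-nonequiv}(a), so the map on fixed points is a disjoint union of products of weak equivalences, hence a weak equivalence.

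The main substantive point, beyond invoking the non-equivariant input twice, is the identification of $\OGr_k(V_\C)^{\Z/2}$ as the expected disjoint union of orthogonal Grassmannians of the eigen-components. The verification that $\Z/2$-invariance of $q$ forces the $\pm 1$ eigenspaces to be $q$-orthogonal is the real hinge of the argument; once this is in hand, both fixed-point calculations and the compatibility of the complexification map with the decompositions are essentially formal.
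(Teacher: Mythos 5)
Your proposal is correct and follows essentially the same route as the paper's proof: reduce to underlying spaces and fixed sets, apply Proposition~\ref{pr:quad-form-nonequiv}(a) for the former, and for the latter decompose both fixed-point sets over the $\pm 1$ eigenspaces of $V$ and apply Proposition~\ref{pr:quad-form-nonequiv}(a) factorwise. The only difference is that you spell out the polarization argument for why the eigenspaces are $q$-orthogonal, which the paper simply asserts.
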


\begin{proof}
Taking Proposition~\ref{pr:quad-form-nonequiv}(a) under consideration, it
suffices to prove that the induced map of fixed sets is a weak
equivalence.  Let $V^{\Z/2}$ and $V^{-\Z/2}$ denote the $+1$ and $-1$
eigenspaces for the involution on $V$.  These are orthogonal with
respect to the
inner product on $V$.  A subspace $U\subseteq V$ is fixed under the $\Z/2$
action
if and only if $U$ equals the direct sum $(U\cap V^{\Z/2})\oplus
(U\cap V^{-\Z/2})$.  
From this we get a homeomorphism
\[ \Gr_k(V)^{\Z/2} \iso 
\coprod_i \Gr_i(V^{\Z/2})\times
\Gr_{k-i}(V^{-\Z/2}),
\]
which sends $U\subseteq V$ to the pair $(U\cap V^{\Z/2}, U\cap
V^{-\Z/2})$.  In the same way, one obtains a homeomorphism
\[ \OGr_k(V_\C)^{\Z/2}\iso
\coprod_i \OGr_i(V_\C^{\Z/2})\times
\OGr_{k-i}(V_\C^{-\Z/2}).
\]
Since the inclusions $\Gr_i(V^{\Z/2})\inc \OGr_i(V_\C^{\Z/2})$ and
$\Gr_{j}(V^{-\Z/2})\inc \OGr_{j}(V_\C^{-\Z/2})$ are (non-equivariant) weak
equivalences by Proposition~\ref{pr:quad-form-nonequiv}(a), this completes the proof.
\end{proof}

The above corollary has been included for completeness, but it
actually does not give us what we need.  The $\Z/2$ action on
$V\tens_\R \C$ is complex linear, whereas we will find that we
actually need to consider conjugate linear actions.  We do this next.

Let $W$ be a complex vector space with a nondegenerate quadratic form
$q$.  Let $\sigma\colon W\ra W$ be a
conjugate-linear map such that $\sigma^2=1$.  That is,
$\sigma(zx)=\bar{z}\sigma(x)$ for every $z\in \C$ and $x\in W$.  Also
assume that $q(\sigma x)=\overline{q(x)}$ for every $x\in W$.  The
space $\OGr_k(W)$ then has a $\Z/2$-action induced by $\sigma$: if
$J\subseteq W$ is a complex subspace such that $q|_J$ is
nondegenerate, then $\sigma(J)$ is another complex subspace on which
$q$ restricts to be nondegenerate.  Our next task is to analyze the
fixed space $\OGr_k(W)^{\Z/2}$.  

\begin{remark}
Let $(V,q)$ be an orthogonal representation for $\Z/2$, and let $W$ be the
vector space $V\tens_\R \C$ with the action given by $\sigma(v\tens
z)=\sigma(v)\tens \bar{z}$.  Then $(W,q)$ satisfies the conditions of
the above paragraph.  In this case we will use the notation
$W=V\tens_\R \overline{\C}$.  The bar over the $\C$ just reminds us
that $\Z/2$ acts on that factor by conjugation. 
\end{remark}

Returning to the case of a general $W$, 
note that as a real vector space $W$ decomposes as $W^{\Z/2}\oplus
W^{-\Z/2}$, where the summands are the subspaces on which $\sigma$
acts as the identity and as multiplication by $-1$.  Moreover,
multiplication by $i$ maps $W^{\Z/2}$ isomorphically onto
$W^{-\Z/2}$.  Finally, one easily checks that $q$ is real-valued on
both $W^{\Z/2}$ and
$W^{-\Z/2}$.

If $J\subseteq W$ is any complex subspace that is fixed
by $\sigma$ then we have the decomposition $J=(J\cap W^{\Z/2})\oplus (J\cap W^{-\Z/2})$,
and multiplication by $i$ interchanges the two summands.  In this way
we get a map
\[ \OGr_k(W,q)^{\Z/2}\lra \Gr_k(W^{\Z/2}), \qquad J\mapsto J\cap W^{\Z/2}\]
and the image is readily checked to land in $\OGr_k(W^{\Z/2},q)$.  
Conversely, if $M\subseteq W^{\Z/2}$ is any $k$-dimensional real
subspace such that $q|_M$ is nondegenerate then $M\oplus iM\subseteq
W$ is a $k$-dimensional complex subspace with the same property.  So
we also get a map $\OGr_k(W^{\Z/2}) \ra \OGr_k(W,q)^{\Z/2}$.  It is routine
to check that these maps are inverse isomorphisms.  Thus, we have
proven the following:

\begin{prop}
In the above setting, there is a homeomorphism $\OGr_k(W,q)^{\Z/2}\iso
\OGr_k(W^{\Z/2},q)$.  
\end{prop}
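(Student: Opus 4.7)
The plan is to verify that the two maps described immediately before the statement are well-defined, continuous, and mutual inverses; the homeomorphism then follows.

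First I would record the structural facts about $W$. The $\R$-linear idempotents $\tfrac{1}{2}(1 \pm \sigma)$ give the real-vector-space decomposition $W = W^{\Z/2} \oplus W^{-\Z/2}$, and conjugate-linearity of $\sigma$ means $\sigma(iw) = -i\sigma(w)$, so multiplication by $i$ carries $W^{\Z/2}$ isomorphically onto $W^{-\Z/2}$. In particular $\dim_\R W^{\Z/2} = \dim_\C W$. The identity $q(x) = q(\sigma x) = \overline{q(x)}$ shows that $q$ restricts to a real-valued form on $W^{\Z/2}$. Applying the same decomposition to $\sigma|_J$ for any $\sigma$-invariant complex subspace $J \subseteq W$ yields $J = (J \cap W^{\Z/2}) \oplus i(J \cap W^{\Z/2})$ as an $\R$-vector space, so $\dim_\R(J \cap W^{\Z/2}) = \dim_\C J$.

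Next I would check that both maps land where claimed. Given $J \in \OGr_k(W,q)^{\Z/2}$, the above shows $J \cap W^{\Z/2}$ is a real $k$-plane; for nondegeneracy, any $x \in J \cap W^{\Z/2}$ that is $q$-orthogonal to $J \cap W^{\Z/2}$ is, by $\C$-bilinearity, also orthogonal to $i(J \cap W^{\Z/2})$ and hence to all of $J$, forcing $x = 0$. Conversely, given $M \in \OGr_k(W^{\Z/2}, q)$, the subspace $M + iM$ is patently $\sigma$-stable and complex $k$-dimensional; if $y = m_1 + i m_2 \in M + iM$ is $q$-orthogonal to $M$ then $\C$-bilinearity gives $q(m_1,m) + i q(m_2,m) = 0$ for every $m \in M$, and since $q$ is real-valued on $W^{\Z/2}$ each summand must vanish separately, so $m_1 = m_2 = 0$ by nondegeneracy of $q|_M$; again by $\C$-bilinearity, orthogonality to $M$ already implies orthogonality to $M + iM$.

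The composites are then the identity by inspection, using $J = (J \cap W^{\Z/2}) \oplus i(J \cap W^{\Z/2})$ in one direction and $(M + iM) \cap W^{\Z/2} = M$ in the other. For continuity, the classical homeomorphism between the conjugation fixed points of a complex Grassmannian and the real Grassmannian of its real form realizes both maps at the level of the ambient Grassmannians, and $\OGr$ is an open subspace of $\Gr$ in each case, so the restrictions remain continuous. No single step is a serious obstacle; the main subtlety to highlight is that the conjugate-linear involution here is genuinely different from the $\C$-linear action analyzed in Corollary~\ref{co:Gr=OGr}, where the fixed set broke up as a disjoint union of products of smaller complex Grassmannians---the conjugate-linearity forces the fixed set to be a single connected real Grassmannian instead.
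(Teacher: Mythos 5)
Your argument is correct and follows exactly the route the paper takes: the real decomposition $W=W^{\Z/2}\oplus W^{-\Z/2}$ with $i\cdot W^{\Z/2}=W^{-\Z/2}$, the maps $J\mapsto J\cap W^{\Z/2}$ and $M\mapsto M\oplus iM$, and the check that they are mutually inverse. You have simply filled in the nondegeneracy and continuity verifications that the paper dismisses as ``readily checked'' and ``routine,'' and your details are all sound.
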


We are now ready to prove the main theorem of this section:

\begin{proof}[Proof of Theorem~\ref{th:Z/2}]
Write $q_{sp}$ for the split quadratic form on $\C^N$, and $q_{ss}$
for the sum-of-squares quadratic form on $\C^N$.  
The theorem concerns
the space $\OGr_k(\C^N,q_{sp})$ where the $\Z/2$-action is
induced by complex conjugation.  Let $x_1,y_1,x_2,y_2,\ldots $ denote our
standard coordinates on $\C^N$, with the convention that when $N$ is
odd then the last of the $y_j$'s is just zero.
By changing coordinates we can change $q_{sp}$ into $q_{ss}$.
Precisely, define a map $\phi\colon \C^N\ra \C^N$ by
\[
\phi(x_1,y_1,x_2,y_2,\ldots)=(x_1+iy_1,x_1-iy_1,x_2+iy_2,x_2-iy_2,\ldots).
\]
Then we have
$q_{sp}(\phi(v))=q_{ss}(v)$ for any $v\in \C^N$.  This gives us an
identification of non-equivariant spaces $\OGr_k(\C^N,q_{sp})\iso
\OGr_k(\C^N,q_{ss})$.  To extend this to an equivariant identification,
note that if the target of $\phi$ is given the conjugation action then
the domain of $\phi$ gets the action that both conjugates all
coordinates AND changes the signs of the
$y$-coordinates.  In terms of previously-established notation, this is the equivariant
homeomorphism
\[ \OGr_k(\C^N,q_{sp}) \iso \OGr_k(\cU^N\tens \overline{\C},q_{ss}).\]

Consider the complexification map
\[ c\colon \Gr_k(\cU^N) \ra \OGr_k(\cU^N\tens \overline{\C},q_{ss}).
\]
We have seen in Proposition~\ref{pr:quad-form-nonequiv}(a) that this
is a non-equivariant weak equivalence.  To analyze what is happening
on fixed sets, let $W=\cU^N\tens \overline{\C}$.  Note that
$W^{\Z/2}=\{(r_1,ir_2,r_3,ir_4,\ldots,(i)r_N)\,|\, r_1,\ldots,r_N\in
\R\}$, where the last coordinate has the $i$ in front when $N$ is even.
Note as well that we can decompose $W^{\Z/2}=W^{\Z/2}_+\oplus W^{\Z/2}_-$
where
\[ W^{\Z/2}_+=\{(r_1,0,r_3,0,\ldots)\,|\, r_i\in \R\}, \qquad
W^{\Z/2}_-=\{(0,ir_2,0,ir_4,\ldots)\,|\, r_i\in \R\}.
\]
The form $q_{ss}$ is positive definite on the first summand and
negative definite on the second.

Let $\cU^N_+$ and $\cU^N_-$ be the subspaces spanned by the odd- and
even-numbered basis elements, respectively.  So $\cU^N_+=(\cU^N)^{\Z/2}$ and
$\cU^N_-=(\cU^N)^{-\Z/2}$.  
Note the following maps:
\[ \xymatrixcolsep{0.001pc}\xymatrix{
\Gr_k(\cU^N)^{\Z/2} \ar[r]^-c &
\OGr_k(W,q_{ss})^{\Z/2} &
\OGr_k(W^{\Z/2},q_{ss})\ar[l]_-\iso \\
\coprod\limits_{a+b=k} \Gr_a(\cU^N_+)\times \Gr_b(\cU^N_-) \ar@{=}[u]\ar@{.>}@<0.5ex>[rr] &&
\coprod\limits_{a+b=k} \Gr_a(W^{\Z/2}_+) \times \Gr_b(W^{\Z/2}_-) \ar[u]_\sim
}
\]
The map on the right is the evident one, and is a weak homotopy
equivalence by  Proposition~\ref{pr:quad-form-nonequiv}(b).  The
dotted map is the obvious homeomorphism, obtained by identifying
$\cU^N_+=W^{\Z/2}_+$, $i\cdot\cU^N_-= W^{\Z/2}_-$.
One readily checks that the diagram commutes, and this verifies that
$c$ induces a weak homotopy equivalence of fixed sets.  Thus, $c$ is
an equivariant weak equivalence.
\end{proof}

\begin{remark}
\label{re:quadratic=real}
The non-equivariant part of Theorem~\ref{th:Z/2} (equivalently,
Proposition~\ref{pr:quad-form-nonequiv}(a)) gives the homotopy
equivalence of spaces $\OGr_k(\C^N) \he
\Gr_k(\R^N)$.  This is a classical result: for example, see \cite[remarks in
Section 1.5]{A1} and \cite[discussion of real Grassmannians throughout
Chapter 5]{S}.  Notice that this gives some corroboration to the idea
that quadratic bundles are the motivic analogs of real vector
bundles.  
\end{remark}


\appendix

\section{The deRham ring of invariants in characteristic two}

Let $K_n=\Lambda(a_1,\ldots,a_n)\tens \F_2[b_1,\ldots,b_n]$, and let $\Sigma_n$
act on $K_n$ by simultaneous permutation of the $a_i$'s and $b_j$'s.
Let $L_n=K_n^{\Sigma_n}$.  We call $L_n$ the ``deRham ring of
invariants''.  Note that there is an augmentation $\epsilon\colon K_n
\ra \F_2$ sending all the $a_i$'s and $b_j$'s to zero, and this
restricts to an augmentation of $L_n$.  Let $I\subseteq L_n$ be the
augmentation ideal.  Our first aim in this section is to give a vector
space basis for the module of indecomposables $I/I^2$.  Said
differently, we give a minimal set of generators for the ring $L_n$.  

Note that $K_{n+1}$ maps to $K_n$ by sending $a_{n+1}$ and $b_{n+1}$
to zero, and this homomorphism induces an algebra map $L_{n+1}\ra
L_n$.  That is, if $f(a,b)$ is a polynomial expression in the $a$'s
and $b$'s that is invariant under the $\Sigma_{n+1}$-action, then
eliminating all monomials with an $a_{n+1}$ or $b_{n+1}$ produces a
polynomial that is invariant under $\Sigma_n$.  From this description
it is also clear that $L_{n+1}\ra L_n$ is surjective: if $f(a,b)$ is a
$\Sigma_n$-invariant then one can make a $\Sigma_{n+1}$-invariant by 
 adding on appropriate monomial terms that all have $a_{n+1}$ or $b_{n+1}$.

Let $L_\infty$ be the inverse limit of the system
\[ \cdots \lra  L_3 \lra L_2 \lra L_1.
\]
The second goal of this section is to give a complete description of
the ring $L_\infty$.

These results are presumably well-known amongst algebraists.  See
Section 7 of \cite{R} for the case of
$\F_2[a_1,\ldots,a_k,b_1,\ldots,b_k]$, which can be used to deduce
some of our
results.  See also \cite[Section 2]{GSS} for some related work.  Rather
than use the machinery of \cite{R}, however, we have chosen to give a
`low-tech' treatment which is perhaps more illuminating for our
present purposes.

\medskip

If $m\in K_n$ is a monomial in the $a_i$'s and $b_j$'s, write $[m]$ for the
smallest polynomial that contains $m$ as one of its terms and is
invariant under the $\Sigma_n$-action.  Here `smallest' is measured in
terms of the number of monomial summands.  We can also describe $[m]$
as
\[ [m]=\sum_{\sigma\in \Sigma_n/H} \sigma.m \]
where $H$ is the stabilizer of $m$ in $\Sigma_n$.  

Using the above noation, write $\alpha_{i,e}=[a_1\ldots
a_{2^i}b_1^e\ldots b_{2^i}^e]$ for $1\leq 2^i\leq n$ and $0\leq e$.
Also, write $\sigma_i(a)$ and $\sigma_i(b)$ for the elementary
symmetric functions in the $a$'s and $b$'s, respectively.  So
$\sigma_i(a)=[a_1\ldots a_i]$, for example.

We can now state the main result:

\begin{thm}\label{th:En}
\mbox{}\par
\begin{enumerate}[(a)]
\item $L_n$ is minimally generated by the classes $\sigma_i(b)$ for
$1\leq i \leq n$ together with the classes $\alpha_{i,e}$ for $1\leq
2^i \leq n$ and $0\leq e\leq
 \frac{n}{2^i} -1$.  That is to say, these classes give
a vector space basis for $I/I^2$.  
\item The number of indecomposables for $L_n$ is 
\[ 3n-(\text{$\#$ of
ones in the binary expansion for $n$}).
\]
\item $L_\infty \iso \Lambda \bigl (\alpha_{i,e}\,|\, 0\leq i, 0\leq
e\bigr )\tens
\F_2[\sigma_1(b),\sigma_2(b),\ldots]$.
\end{enumerate}
\end{thm}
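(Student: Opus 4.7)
My plan is to prove (a) by separate generation and minimality arguments, deduce (b) by counting, and obtain (c) from (a) in the inverse limit via a Poincar\'e series comparison. The key structural tool is the $a$-degree decomposition
\[ L_n = \bigoplus_{r=0}^n M_r, \qquad M_r \iso \F_2[b_1,\ldots,b_n]^{\Sigma_r \times \Sigma_{n-r}}, \]
where the isomorphism records the coefficient of $a_1\cdots a_r$ in an invariant. Under it, $\alpha_{i,e}$ corresponds to the diagonal monomial $b_1^e\cdots b_{2^i}^e$, the $\sigma_j(b)$ sit in $M_0$, multiplication $M_0 \otimes M_r \to M_r$ is ordinary polynomial multiplication, and multiplication $M_{r_1} \otimes M_{r_2} \to M_{r_1+r_2}$ follows a split-subset formula coming from $a_S a_T = a_{S \sqcup T}$ on disjoint subsets (and $0$ otherwise).

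For generation, I would reduce a general orbit sum in $M_r$ modulo decomposables in two steps. First, show that every $\Sigma_r \times \Sigma_{n-r}$-invariant in $\F_2[b]$ is, modulo $J_+ \cdot M_r$ (with $J_+ \subset \F_2[b]^{\Sigma_n}$ the augmentation ideal), a combination of diagonal monomials $b_1^e\cdots b_r^e$; this is a lex-monomial reduction leveraging the classical freeness of $\F_2[b]^{\Sigma_r \times \Sigma_{n-r}}$ as a rank-$\binom{n}{r}$ module over $\F_2[b]^{\Sigma_n}$, which holds in any characteristic. Second, invoke the characteristic-$2$ binomial-coefficient argument (as in Proposition~\ref{pr:w-indecomposable}) to decompose $[a_1\cdots a_r b_1^e\cdots b_r^e]$ as a nontrivial product whenever $r$ is not a power of $2$; a Lemma~\ref{le:w1e}-type identity, obtained by expanding $[a_1\cdots a_{2^i} b_1^e\cdots b_{2^i}^e] \cdot \sigma_j(b)$, handles $e \geq n/2^i$.

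For minimality, the $a$-degree grading splits $I^2 \cap M_{2^i}$ into (a) $J_+$-multiples of elements of $M_{2^i}$ and (b) cross-products $M_{r_1}\cdot M_{r_2}$ with $r_1+r_2 = 2^i$ and $r_1, r_2 \geq 1$. The case $\alpha_{i,0}$ reduces via the specialization $b_j \mapsto 0$ to the classical indecomposability of $\sigma_{2^i}(a)$ in $\Lambda(a)^{\Sigma_n}$. For $e > 0$ the key claim is that the class of $b_1^e\cdots b_{2^i}^e$ in $M_{2^i}/J_+M_{2^i}$ lies outside the subspace spanned by the images of type-(b) products; this I would verify by working in the explicit Schubert-style basis for $M_{2^i}/J_+M_{2^i}$ provided by the freeness above, and by checking from the split-subset formula that type-(b) products of $a$-degrees $r_1, r_2 \geq 1$ only produce ``unbalanced'' monomial classes never matching the fully symmetric diagonal.

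Part (b) is then Legendre's formula: $n + \sum_{i \geq 0,\, 2^i \leq n}\lfloor n/2^i\rfloor = n + (2n - s_2(n)) = 3n - s_2(n)$. For (c), the identity $\alpha_{i,e}^2 = 0$ in characteristic $2$---overlapping-pair terms vanish via $a_j^2 = 0$ and disjoint-pair terms cancel under swap of the two subsets---together with (a) in the limit produces a surjection $\Lambda(\alpha_{i,e}) \otimes \F_2[\sigma_i(b)] \twoheadrightarrow L_\infty$; Poincar\'e series matching, computed from the decomposition above and passed to $n \to \infty$, yields injectivity. The main obstacle is the minimality step at $e > 0$: carrying out the explicit bidegree bookkeeping needed to separate the diagonal class from the type-(b) products in the finite-dimensional quotient $M_{2^i}/J_+M_{2^i}$.
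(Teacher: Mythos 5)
Your $a$-degree decomposition $L_n=\bigoplus_r M_r$ with $M_r\iso \F_2[b_1,\ldots,b_n]^{\Sigma_r\times\Sigma_{n-r}}$ is a valid and genuinely different framework from the paper's direct manipulation of orbit sums, but the generation step contains a false claim. You assert that every element of $M_r$ is congruent, modulo $J_+M_r$ alone, to a combination of the diagonal monomials $b_1^e\cdots b_r^e$. This fails for every $r$ with $2\leq r\leq n-1$: the quotient $M_r/J_+M_r$ has Hilbert series the Gaussian binomial $\binom{n}{r}_q$ (with $q$ recording polynomial degree in the $b$'s), hence has nonzero classes in every degree from $0$ to $r(n-r)$, whereas diagonal monomials occupy only degrees divisible by $r$. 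Concretely, $[a_1a_2b_1]$ corresponds to $b_1+b_2\in M_2$; in $b$-degree one, $J_+M_2$ is spanned by $b_1+\cdots+b_n$, so for $n\geq 3$ the class of $b_1+b_2$ is nonzero in $M_2/J_+M_2$ and is no combination of diagonal monomials. This element is decomposable, but only via a cross-product of positive $a$-degrees ($[a_1b_1]\cdot[a_1]=[a_1a_2b_1]$). So the reduction cannot be run one $a$-degree at a time against $M_0$: one must first use products $M_{r_1}\cdot M_{r_2}$ with $r_1,r_2\geq 1$ to absorb orbit sums of monomials having both bound and free $a$'s, which is exactly the content of Lemma~\ref{le:in1}(1) and Lemma~\ref{le:in2}.

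The minimality step for $\alpha_{i,e}$ with $e>0$, which you yourself flag as the main obstacle, is also not settled by the proposed ``unbalanced monomials'' check. A product of two invariants of positive $a$-degree contributes to the coefficient of $a_1\cdots a_{2^i}$ an expression that is automatically $\Sigma_{2^i}$-symmetric, and in the quotient $M_{2^i}/J_+M_{2^i}$ distinct monomials become identified (for instance $b_1^{e_1}b_2^{e_2}+b_1^{e_2}b_2^{e_1}$ may well reduce to an expression involving the class of $b_1^eb_2^e$), so monomial-level bookkeeping does not decide membership. The paper avoids this entirely by reversing the logical order: generation plus the Poincar\'e-series bijection of Proposition~\ref{pr:stable-iso} proves part (c) first; in the free graded-commutative algebra $L_\infty$ the generators are trivially independent modulo $I^2$; and Lemma~\ref{le:iso} (the map $L_\infty\ra L_n$ is an isomorphism in degrees $\leq n$) transfers independence down to $L_n$. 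Since your argument for (c) uses only generation (for surjectivity) and a Hilbert-series count (for injectivity), you can close the minimality gap at no extra cost by adopting this order. Your part (b) is the same Legendre-type count as the paper's, and your strategy for (c) matches the paper's.
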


The Online Encyclopedia of Integer Sequences \cite{OE} was useful in discovering
the formula in part (b).

The proof of this theorem will be given after establishing several
lemmas.  The first result we give is not directly needed for the
proof, but is   included for two reasons: it provides some context
that helps explain the more complicated theorem above, and we actually
need the result in the proof of Proposition~\ref{pr:w-indecomposable}.  The result
is probably well-known, but we are not aware of a reference.  

\begin{prop} 
\label{pr:exterior-inv}
Let $\Sigma_n$ act on $\Lambda_{\F_2}(a_1,\ldots,a_n)$ by
permutation of indices.  Then
\[
\Lambda(a_1,\ldots,a_n)^{\Sigma_n}=
\Lambda(\sigma_1,\sigma_2,\sigma_4,\ldots,\sigma_{2^k})/R
\]
where $k$ is the largest integer such that $2^k\leq n$ and $R$
is the ideal generated by all products
$\sigma_{2^{i_1}}\sigma_{2^{i_2}}\cdots\sigma_{2^{i_s}}$
where $2^{i_1}+2^{i_2}+\cdots+2^{i_s}>n$.
\end{prop}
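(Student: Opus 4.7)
The plan is to analyze the multiplicative structure of $\Lambda(a_1,\ldots,a_n)^{\Sigma_n}$ directly and then verify the presentation by a dimension count. First I would observe that, as a vector space, $\Lambda(a_1,\ldots,a_n)^{\Sigma_n}$ has a basis consisting of the $\Sigma_n$-orbit sums of monomials in the $a_i$'s; since every monomial in the exterior algebra is a product of distinct $a_i$'s, these orbits are parametrized by the cardinality alone and the basis is $\sigma_0=1, \sigma_1, \sigma_2, \ldots, \sigma_n$. So the algebra has dimension $n+1$ over $\F_2$.

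The core computation is the product formula. Expanding $\sigma_i \sigma_j$ and using that $a_\ell^2=0$ in the exterior algebra, one gets
\[
\sigma_i\sigma_j \;=\; \sum_{\substack{|S|=i,\,|T|=j\\ S\cap T=\emptyset}} a_{S\cup T} \;=\; \tbinom{i+j}{i}\,\sigma_{i+j},
\]
where we set $\sigma_m=0$ for $m>n$. By Kummer's theorem, $\binom{i+j}{i}$ is odd modulo $2$ precisely when the binary expansions of $i$ and $j$ have disjoint support (no carries occur in the base-$2$ addition). Consequently $\sigma_i\sigma_j=\sigma_{i+j}$ if $i$ and $j$ have disjoint binary supports and $i+j\leq n$, and otherwise $\sigma_i\sigma_j=0$. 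Two immediate consequences: (i) if $i=2^{j_1}+\cdots+2^{j_r}$ is the binary expansion of $i\leq n$, then $\sigma_i=\sigma_{2^{j_1}}\cdots\sigma_{2^{j_r}}$, so the $\sigma_{2^i}$ with $2^i\leq n$ generate the algebra; and (ii) $\sigma_{2^i}^2=\binom{2^{i+1}}{2^i}\sigma_{2^{i+1}}=0$ since adding $2^i$ to itself in base $2$ produces a carry. Thus the product map extends to a surjective ring map
\[
\Phi\colon \Lambda_{\F_2}\bigl(\sigma_1,\sigma_2,\sigma_4,\ldots,\sigma_{2^k}\bigr) \twoheadrightarrow \Lambda(a_1,\ldots,a_n)^{\Sigma_n}.
\]

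It remains to identify $\ker\Phi$. The formula above shows immediately that every product $\sigma_{2^{i_1}}\cdots\sigma_{2^{i_s}}$ with $2^{i_1}+\cdots+2^{i_s}>n$ lies in $\ker\Phi$ (either it vanishes in the exterior source because of a repeated factor, or the product pushes forward to a $\sigma_m$ with $m>n$). So the ideal $R$ defined in the statement is contained in $\ker\Phi$, and $\Phi$ factors through $\Lambda(\sigma_1,\sigma_2,\sigma_4,\ldots,\sigma_{2^k})/R$. The closing step is a dimension count: in the domain a monomial basis is indexed by subsets $J\subseteq\{0,1,\ldots,k\}$, and passing to the quotient by $R$ restricts us to subsets with $\sum_{i\in J} 2^i\leq n$; because $2^k\leq n<2^{k+1}$, these subsets are in bijection (via binary expansion) with the integers $0,1,\ldots,n$, giving dimension exactly $n+1$. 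Matching this against the $n+1$-dimensional target forces $\Phi$ to descend to an isomorphism, completing the proof.

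The main obstacle is not conceptual but notational: one has to be careful to double-check the Lucas/Kummer characterization of the parity of $\binom{i+j}{i}$, and to verify that the exterior-algebra relations $\sigma_{2^i}^2=0$ are genuinely forced (so that no extra generators or relations are needed). Once that parity statement is in hand, the combinatorics of binary expansions aligns everything cleanly.
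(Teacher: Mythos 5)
Your proposal is correct and follows essentially the same route as the paper: exhibit the orbit sums $\sigma_0,\ldots,\sigma_n$ as a basis of the invariants, establish the product formula $\sigma_i\sigma_j=\binom{i+j}{i}\sigma_{i+j}$, use the parity of binomial coefficients to see that the $\sigma_{2^i}$ generate and square to zero, and conclude by matching dimensions against the surjection. The only cosmetic differences are that you invoke Kummer's theorem explicitly and count via binary expansions where the paper compares graded Poincar\'e series.
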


\begin{proof}
It is easy to see that the classes $1,\sigma_1,\ldots,\sigma_n$ form a
vector space basis for the ring of invariants over $\F_2$.  
Put a grading on $\Lambda(a_1,\ldots,a_n)$ by having the degree of
each $a_i$ be $1$.  Then the ring of invariants is also graded;
the dimension of each homogeneous piece equals $1$ in degrees
from $0$ through $n$, and zero in degrees larger than $n$.

It is also easy to see that $\sigma_i^2=0$ for each $i$, and so we get
a map of rings
\[ \Lambda(\sigma_1,\ldots,\sigma_n)/R \fib
\Lambda(a_1,\ldots,a_n)^{\Sigma_n}.
\]

The next thing to note is that $\sigma_r\cdot
\sigma_s=\tbinom{r+s}{r}\sigma_{r+s}$.  This is an easy computation:
distributing the product in $[a_1\ldots a_r]\cdot [a_1\ldots a_s]$ one
finds that the products of monomials are all zero if the monomials
have any variables in common.  The products that are not zero have the
form $a_{i_1}\ldots a_{i_{r+s}}$, and such a monomial appears exactly
$\binom{r+s}{r}$ times.  

If $r$ is not a power of $2$ then there exists an $i$ such that
$\binom{r}{i}$ is odd, which implies that $\sigma_r=\sigma_i\cdot
\sigma_{r-i}$.  So such classes are decomposable.  We therefore have a
map
\[ \Lambda(\sigma_1,\sigma_2,\sigma_4,\ldots,\sigma_{2^k})/R \fib
\Lambda(a_1,\ldots,a_n)^{\Sigma_n}.
\]
This is a map of graded algebras, and 
the Poincar\'e Series for the domain and target are readily checked to
coincide.  Since the map is a surjection, it must be an isomorphism.
\end{proof}

We next establish a series of lemmas directly dealing with the
situation of Theorem~\ref{th:En}.
We begin by introducing some notation and terminology.  If
$I=\{i_1,\ldots,i_k\}$ then write $a_I$ for $a_{i_1}a_{i_2}\cdots
a_{i_k}$.  Likewise, if $d_I$ is a function $I\ra \Z_{\geq 0}$ then
write $b_I^{d_I}$ for the monomial
$b_{i_1}^{d_{i_1}}b_{i_2}^{d_{i_2}}\cdots b_{i_k}^{d_{i_k}}$.  If $m$
is a monomial in the $a$'s and $b$'s, then the variables $a_i$ and
$b_i$ are said to be \dfn{bound} in $m$ if $a_ib_i$ divides $m$.  If
$a_i$ divides $m$ but $b_i$ does not, we will say that $a_i$ is
\dfn{free} in $m$ (and in the opposite situation we'll say that $b_i$
is free).  Any monomial may be written uniquely in the form
\[ m=a_Ib_I^{d_I} a_J b_K^{e_K}
\]
where the indices in $I$ represent all the bound variables: so $I\cap
J = I\cap K = J\cap K =\emptyset$.  Finally, recall that $[m]$ denotes
the smallest invariant polynomial containing $m$ as one of its terms.

\begin{lemma} 
\label{le:in1}
Let $m=a_Ib_I^{d_I} a_J b_K^{e_K}$.  Then $[m]$ is
decomposable in $L_n$ if any of the following conditions are satisfied:
\begin{enumerate}[(1)]
\item  $I\neq \emptyset$ and $J\neq \emptyset$ (i.e., some of the
$a$'s are bound and some are free).
\item $J=\emptyset$ and $d_{i_1}\neq d_{i_2}$ for some $i_1,i_2\in I$.
\item $J=K=\emptyset$ and $\#I$ is not a power of $2$.
\item $I=K=\emptyset$ and $\#J$ is not a power of $2$.  
\end{enumerate}
\end{lemma}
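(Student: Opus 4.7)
The plan is to handle each of the four conditions by exhibiting $[m]$ as a product of two elements of the augmentation ideal $I$, possibly modulo correction terms that can be controlled by induction.

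Case (4) is immediate: when $I=K=\emptyset$ we have $[m]=\sigma_{|J|}(a)$, and the identity $\sigma_r\sigma_s=\binom{r+s}{r}\sigma_{r+s}$ from the proof of Proposition~\ref{pr:exterior-inv} shows $\sigma_n$ is decomposable whenever $n$ is not a power of $2$. Case (3) reduces, via case (2), to the situation where $d_I$ is constant with value $d$; then choosing $r+s=|I|$ with $\binom{r+s}{r}$ odd and expanding $[a_{I_1}b_{I_1}^d]\cdot[a_{I_2}b_{I_2}^d]$ (with $|I_1|=r$ and $|I_2|=s$) yields, after using $a_i^2=0$ to discard non-disjoint-support contributions, exactly $\binom{r+s}{r}[a_Ib_I^d]=[m]$ in characteristic two. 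For case (2), partition $I=I_1\sqcup\cdots\sqcup I_r$ with $r\geq 2$ so that every index in $I_s$ has the common exponent $d^{(s)}$ and the $d^{(s)}$ are pairwise distinct. A direct count shows that $[a_{I_1}b_{I_1}^{d^{(1)}}]\cdot[a_{I\setminus I_1}b_{I\setminus I_1}^{d_{I\setminus I_1}}]$ equals $[m]$ on the nose: the bound pairs with exponent $d^{(1)}$ can only come from the first factor, pinning down the splitting uniquely.

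Case (1) is the main obstacle, and I handle it by induction on $|K|$. In the base case $K=\emptyset$, the product $[a_Ib_I^{d_I}]\cdot[a_J]$ equals $[m]$ exactly, since with no free $b$'s present there is only one way to split the $a$-support of any orbit representative of $m$ into a size-$|I|$ bound part matching the exponent pattern $d_I$ and a size-$|J|$ free part. For the inductive step, expand $[a_Ib_I^{d_I}]\cdot[a_Jb_K^{e_K}]$: a typical nonzero contribution $x'y'$ has $x'$ with bound index set $I_1$ and $y'$ with free-$b$ index set $K_1$, and the only way for $x'y'$ to land outside the orbit of $m$ is for $I_1\cap K_1$ to be nonempty, since such an overlap absorbs some free $b$'s into the bound part. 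Each correction term is an invariant $[m']$ that still satisfies the hypothesis of (1) but with strictly smaller $|K'|<|K|$; the inductive hypothesis then gives $[m']\in I^2$, and combined with the main term this yields $[m]\in I^2$.
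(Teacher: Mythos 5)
Your treatments of (1), (3) and (4) are correct and essentially identical to the paper's: (1) by induction on $\#K$ with the cross terms (bound indices of the first factor colliding with free $b$'s of the second) controlled by the inductive hypothesis, (3) and (4) by the mod-$2$ binomial coefficient $\binom{r+s}{r}$. Your $K=\emptyset$ subcase of (2) is also fine, and is a harmless variant of the paper's argument: you split off the full block of indices carrying one fixed exponent $d^{(1)}$, whereas the paper splits off the block of \emph{minimal} exponent; either way the distinctness of the exponents forces multiplicity one.

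The genuine gap is that condition (2) only assumes $J=\emptyset$; it places no restriction on $K$, so $m=a_Ib_I^{d_I}b_K^{e_K}$ may carry free $b$'s. Your proposed factorization $[a_{I_1}b_{I_1}^{d^{(1)}}]\cdot[a_{I\setminus I_1}b_{I\setminus I_1}^{d_{I\setminus I_1}}]$ contains no $b_K^{e_K}$ at all, so when $K\neq\emptyset$ it cannot equal $[m]$ (the degrees do not even agree), and your proof of (2) simply does not address this case. The case is not dispensable: the lemma is invoked with $K\neq\emptyset$ in the proof of Lemma~\ref{le:in3}, where the correction terms have the form $[a_1\ldots a_kb_1^{d_1}\ldots b_k^{d_k}b_{k+1}\ldots b_{k+i}]$ with unequal $d_j$'s and a nonempty free-$b$ part. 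The repair is exactly the device you already use in (1): attach $b_K^{e_K}$ to one of the two factors (the paper puts it with the minimal-exponent block) and induct on $\#K$; the product is then $[m]$ plus orbit sums in which some free $b$'s have been absorbed into bound positions, each with strictly smaller free-$b$ set. One must also check that these correction terms still satisfy the hypothesis of (2) so that the induction closes; with the paper's choice this holds because an absorbed $b_j^{e_j}$ only raises a bound exponent that was already strictly larger than the minimal one, so the bound exponents remain non-constant.
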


\begin{proof}
For (1) first assume that $K=\emptyset$, and consider the product
$[a_Ib_I^{d_I}]\cdot [a_J]$.  Distributing this into sums of products
of monomials, such products vanish if $I$ and $J$ intersect.  A
typical term that remains is $a_Ia_Jb_I^{d_I}$, and it is clear that
this term occurs exactly once.  In other words,
\[ [a_Ib_I^{d_I}]\cdot [a_J]=[a_Ia_Jb_I^{d^I}].
\]

To finish the proof of (1) we do an induction on the size of $\#K$.
If $m=a_Ib_I^{d_I}a_Jb_K^{e_K}$ then consider the product
$[a_Ib_I^{d_I}]\cdot [a_Jb_K^{e_K}]$.
Distributing this into sums of products of monomials, we find that
\[ [a_Ib_I^{d_I}]\cdot [a_Jb_K^{e_K}] = [m] + \Bigl (\text{terms of the form
$[a_Ib_I^{d'_I}a_Jb_{K'}^{e_{K'}}]$ where $\#K'<\#K$}\Bigr ). \]
The latter terms come from products where the
indices in $I$ match some of those in $K$.  By induction these latter
terms are all decomposable in $L_n$, so $[m]$ is also decomposable.

For (2), we again first assume that $K=\emptyset$ so that we are looking at
$[a_1\ldots a_s b_1^{d_1}\cdots b_s^{d_s}]$.  By rearranging the
labels we may assume $d_1 \geq d_2 \geq \cdots \geq d_s$.  Let $r$ be
the smallest index for which $d_r=d_s$, and consider the product
\[ [a_1\ldots a_{r-1}b_1^{d_1}\ldots b_{r-1}^{d_{r-1}}]\cdot
[a_r\ldots a_s b_r^f\cdots b_s^f] \]
where $f=d_s$.  Once again considering the pairwise product of
monomials, all such terms vanish except for ones of the form
$a_{i_1}\ldots a_{i_s} b_{i_1}^{d_1}\ldots
b_{i_{r-1}}^{d_{r-1}}b_{i_r}^f\ldots b_{i_s}^f$.  The fact that $f$ is
the smallest degree on the $b_i$'s guarantees that this term appears
exactly once in the sum, and hence
\[ [a_1\ldots a_{r-1}b_1^{d_1}\ldots b_{r-1}^{d_{r-1}}]\cdot
[a_r\ldots a_s b_r^f\cdots b_s^f] = [a_1\ldots a_s b_1^{d_1}\ldots
b_s^{d_s}].
\]

To complete the proof of (2) we perform an induction on $\#K$.
Consider a monomial 
\[ m=a_Ib_I^{d_I}b_K^{e_K}=a_1\ldots a_sb_1^{d_1}\ldots b_s^{d_s}
b_{s+1}^{e_1}\ldots b_{s+k}^{e_k}.
\]  
Again arrange things so that
$d_1\geq d_2 \geq \cdots \geq d_s$ and let $r$ be the smallest index
for which $d_r= d_s$.  If we again write $f=d_s$, then one readily checks that
\[ [a_1\ldots a_{r-1}b_1^{d_1}\ldots b_{r-1}^{d_{r-1}}]\cdot
[a_r\ldots a_s b_r^f\cdots b_s^f b_{s+1}^{e_1}\ldots b_{s+k}^{e_k}] =
[m] + \sum [a_I b_I^{d'_I}b_{K'}^{e_{K'}}]
\]
where for each term in the sum
$K'$ is a
proper subset of $K$.  These terms inside the sum correspond to pairs
of 
monomials in the product for which a
$b_i$ for $1\leq i\leq r-1$ matches a $b_{s+j}$ for $1\leq j\leq k$.
However, by induction on $\#K$ each $[a_Ib_I^{d'_I}b_{K'}^{e_{K'}}]$ is
decomposable, hence $[m]$ is also decomposable.  

To prove (3) it suffices (in light of (2)) to show that $[a_1\ldots a_k b_1^e\ldots
b_k^e]$ is decomposable whenever $k$ is not a power of $2$.  This
assumption guarantees that $\binom{k}{i}$ is odd for some $i$ in the
range $1\leq i \leq k-1$.  We claim that
\[ [a_1\ldots a_ib_1^e\ldots b_i^e]\cdot [a_{i+1}\ldots
a_kb_{i+1}^e\ldots b_k^e]=[a_1\ldots a_kb_1^e\ldots b_k^e].
\]
To see this, note that all terms in the product vanish except for ones
of the form $a_{i_1}\ldots
a_{i_k}b_{i_1}^e\ldots b_{i_k}^e$, and such a term appears exactly
$\binom{k}{i}$ times.  Use that  $\binom{k}{i}$ is odd.

The proof of (4) is the same as for (3), it is really the special case
$e=0$.  
\end{proof}

\begin{lemma}
\label{le:in2}
$L_n$ is generated as an algebra by the elements $\sigma_i(b)$
for $1\leq i \leq n$ together with the classes $[m]$ where
$m=a_Ib_I^{d_I}a_J$ (that is, where $m$ has no free $b$'s).  
\end{lemma}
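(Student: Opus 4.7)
The plan is to induct on $r=|K|$, the number of free $b$'s in $m=a_Ib_I^{d_I}a_Jb_K^{e_K}$. The base case $r=0$ is immediate: $m$ itself has no free $b$'s, so $[m]$ is already a generator of the specified type. For $r\geq 1$, I factor $m=m_0\cdot b_K^{e_K}$ with $m_0=a_Ib_I^{d_I}a_J$, and study the product $[m_0]\cdot [b_K^{e_K}]$ in $L_n$. Note that $[m_0]$ is on our list of generators (since $m_0$ has no free $b$'s), while $[b_K^{e_K}]$ is a symmetric polynomial in $b_1,\ldots,b_n$ and so by the fundamental theorem of symmetric polynomials can be rewritten as a polynomial in the $\sigma_i(b)$'s. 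So $[m_0]\cdot [b_K^{e_K}]$ already lies in the subalgebra that we hope equals $L_n$; if I can show
\[ [m_0]\cdot [b_K^{e_K}] = [m] + \sum_\alpha [m_\alpha], \]
with every $m_\alpha$ having fewer than $r$ free $b$'s, the induction will close.

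To analyze the product I would write $[m_0]=\sum_x x$ and $[b_K^{e_K}]=\sum_y y$ as unweighted sums over distinct orbit elements, so $[m_0]\cdot [b_K^{e_K}]=\sum_{x,y}xy$, and then split this sum according to whether $\mathrm{supp}(x)\cap\mathrm{supp}(y)$ is empty. In the disjoint case, the monomial $xy$ has exactly the shape of $m$ (the free $b$'s contributed by $y$ carry the correct multiset of exponents), so $xy$ lies in the orbit of $m$ and contributes some integer multiple of $[m]$. In the non-disjoint case, whenever an exponent $e_{k_j}$ coming from $y$ lands on an index $i\in I\cup J$ appearing in $x$, that free $b$ either fuses with an existing bound $b_i^{d_i}$ (raising its exponent) or attaches to a free $a_i$ to form a new bound pair; in either case the resulting monomial has at most $r-1$ free $b$'s, and so its orbit sum is handled by the inductive hypothesis.

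The main obstacle is pinning down the coefficient on $[m]$ from the disjoint-support contribution and showing it is $1\pmod 2$ even in the presence of symmetries coming from repeated exponents in $d_I$ or $e_K$. The key rigidity observation is that $y\in\mathrm{orb}(b_K^{e_K})$ contains no $a$'s, so any decomposition $xy=m$ with $\mathrm{supp}(x)\cap\mathrm{supp}(y)=\emptyset$ is completely forced: the $a$-support of $m$ must live entirely inside $x$, and matching the exponents on the bound $b$'s then pins $x$ to $m_0$, hence $y$ to $b_K^{e_K}$. So there is a unique pair $(x,y)$ with $xy=m$, the coefficient is exactly $1$, and solving the displayed equation above expresses $[m]$ as a sum of $[m_0]\cdot [b_K^{e_K}]$ (already in the desired subalgebra) together with orbit sums of strictly smaller $\#K$, which are in the subalgebra by induction. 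This completes the inductive step.
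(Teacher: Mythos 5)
Your proof is correct and follows essentially the same route as the paper's: both hinge on the identity $[a_Ib_I^{d_I}a_J]\cdot[b_K^{e_K}]=[m]+\sum_\alpha[m_\alpha]$, where each $m_\alpha$ has strictly fewer free $b$'s, combined with an induction on $\#K$. The only (harmless) difference is that you place $[b_K^{e_K}]$ in the subalgebra exactly, via the fundamental theorem of symmetric polynomials, whereas the paper works modulo decomposables and closes with an auxiliary induction on degree; your support-disjointness argument pinning the coefficient of $[m]$ to $1$ is a detail the paper leaves implicit.
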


\begin{proof}
Let $Q\subseteq L_n$ denote the subalgebra generated by the elements from
the statement of the lemma.  We will prove that if
$m=a_Ib_I^{d_I}a_Jb_K^{e_K}$
is an arbitrary monomial then $[m]$ is equivalent modulo decomposables
to an element of $Q$.  This readily yields the result by an induction
on degree.     

First consider the case where $I=J=\emptyset$, so that $m=b_K^{e_K}$.
Note that $\Z/2[b_1,\ldots,b_n]\subseteq L_n$, and we know
$\Z/2[b_1,\ldots,b_n]^{\Sigma_n}$ is a polynomial algebra on the
$\sigma_i(b)$ for $1\leq i\leq n$. 
It follows at once that
$[m]$ is equivalent modulo decomposables to a multiple of a
$\sigma_i(b)$.

The next stage of the proof is done by induction on $\#K$.  The base
case $K=\emptyset$ is trivial, as such monomials lie in $Q$ by
definition.  So assume $K\neq \emptyset$ and consider the product
$[a_Ib_I^{d_I}a_J]\cdot [b_K^{e_K}]$.  This product decomposes into
a sum $[m]+[m_1]+[m_2]+\cdots$ where each $m_i$ has fewer free $b$'s
than $m$.  Therefore $[m]$ is equivalent to $\sum_i [m_i]$ modulo
decomposables, and each $[m_i]$ is equivalent to an element of $Q$ by
induction.  
\end{proof}

\begin{cor}
\label{co:Ln-gen}
$L_n$ is generated as an algebra by the following elements:
\begin{enumerate}[(1)]
\item $\sigma_i(b)$ for $1\leq i\leq n$;
\item $[a_1\ldots a_{2^i}b_1^e\ldots b_{2^i}^e]$ for $1\leq 2^i\leq n$
and $e\geq 0$.
\end{enumerate}
\end{cor}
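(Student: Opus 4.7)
The plan is to combine Lemma~\ref{le:in2} with the decomposability criteria from Lemma~\ref{le:in1}, using a straightforward induction on degree to handle the reductions.

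By Lemma~\ref{le:in2}, $L_n$ is generated by the classes $\sigma_i(b)$ for $1\leq i\leq n$ together with all invariants of the form $[a_Ib_I^{d_I}a_J]$, that is, classes built from monomials with no free $b$'s. So it suffices to show that every such class is either already in the list (1)--(2) from the corollary, or can be written as a polynomial in elements of lower degree belonging to (1)--(2). An induction on total degree then finishes the proof, the base case being trivial since the generators of minimal positive degree $1$ are $\sigma_1(b)=[b_1]$ and $[a_1]$, both of which appear in the list.

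For the inductive step, fix a monomial $m=a_Ib_I^{d_I}a_J$ and consider the four possible configurations:
\begin{itemize}
\item If both $I\neq\emptyset$ and $J\neq\emptyset$, then Lemma~\ref{le:in1}(1) shows $[m]$ is decomposable in $L_n$, i.e., a sum of products of lower-degree invariants.
\item If $I=\emptyset$, then $m=a_J$ consists of pure $a$'s, and Lemma~\ref{le:in1}(4) shows $[m]$ is decomposable unless $\#J=2^i$ for some $i$, in which case $[m]=[a_1\ldots a_{2^i}]$ appears in list (2) with $e=0$.
\item If $J=\emptyset$ and the exponents $d_i$ for $i\in I$ are not all equal, then Lemma~\ref{le:in1}(2) shows $[m]$ is decomposable.
\item If $J=\emptyset$ and all the $d_i$ share a common value $e$, then $m=a_1\ldots a_sb_1^e\ldots b_s^e$ where $s=\#I$. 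Lemma~\ref{le:in1}(3) shows $[m]$ is decomposable unless $s=2^i$, in which case $[m]$ is precisely the class from (2).
\end{itemize}

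In every case where $[m]$ is decomposable, writing it as a product of lower-degree invariants allows the inductive hypothesis to apply: each factor is a polynomial in generators from (1)--(2), hence so is $[m]$. The remaining undecomposable cases land directly in the list (1)--(2). This completes the proof.

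The argument is almost entirely bookkeeping once Lemmas~\ref{le:in2} and~\ref{le:in1} are in hand; the only subtle point is ensuring the induction is well-founded, which is immediate since the decomposability statements in Lemma~\ref{le:in1} always express $[m]$ in terms of invariants of strictly smaller total degree.
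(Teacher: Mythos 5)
Your proposal is correct and follows essentially the same route as the paper: start from Lemma~\ref{le:in2} and then use the four decomposability criteria of Lemma~\ref{le:in1} to whittle the generators $[a_Ib_I^{d_I}a_J]$ down to the stated list, with an induction on degree making the reduction rigorous. The paper's proof is just a terser version of your case analysis, so there is nothing to add.
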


\begin{proof}
Lemma~\ref{le:in2} gives the generators $\sigma_i(b)$ and
$[a_Ib_I^{d_I}a_J]$.  Using Lemma~\ref{le:in1}(1) we reduce the
second class  to all elements $[a_Ib_I^{d_I}]$ and $[a_J]$.  Finally,
Lemma~\ref{le:in1}(2,3,4) further reduces the class to the set of
elements in the statement of the corollary.
\end{proof}

We need one more lemma before completing the proof of
Theorem~\ref{th:En}.  For $x,y\in L_n$ let us write $x\equiv y$ to
mean $x$ and $y$ are equivalent modulo decomposables (that is, $x-y\in
I^2$).

\begin{lemma}
\label{le:in3}
If $r\geq k$ and $n\geq r+k$ then 
\[ [a_1\ldots a_k b_1^e \ldots b_k^e b_{k+1}\ldots b_{k+r}] \equiv 
[a_1\ldots a_k b_1^{e+1} \ldots b_k^{e+1} b_{k+1}\ldots b_{r}]. 
\]
Consequently, provided $ke\leq n$ one has that
\[ [a_1\ldots a_k b_1^e\ldots b_k^e]\equiv [a_1\ldots a_kb_1\ldots
b_{ke}]. 
\]
If $k+ke \leq n$ we also have
\[ [a_1\ldots a_k b_1^e\ldots b_k^e]\equiv [a_1\ldots a_kb_{k+1}\ldots
b_{k+ke}]. 
\]
\end{lemma}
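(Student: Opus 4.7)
The plan is to establish the first equivalence by expanding a single product in $L_n$ and reading off the non-decomposable terms; the second and third equivalences will follow by iteration.

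Consider the product
\[
P := [a_1\cdots a_k\, b_1^e \cdots b_k^e] \cdot [b_1\cdots b_r]
\]
in $L_n$. Both factors lie in the augmentation ideal $I$, so $P \in I^2$. Expanding by distributivity, terms are indexed by pairs $(S,T)$ of subsets of $\{1,\ldots,n\}$ with $|S|=k$ and $|T|=r$; the contribution of $(S,T)$ is the single monomial $a_S\, b_{S\cap T}^{e+1}\, b_{S\setminus T}^{e}\, b_{T\setminus S}$. Since the three sets $S\cap T$, $S\setminus T$, $T\setminus S$ can be read off from where each $b$-exponent occurs, each resulting monomial is produced by a unique pair $(S,T)$. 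Collecting by $s := |S\cap T|$ gives
\[
P = \sum_{s=0}^{k} [a_1\cdots a_k\, b_1^{e+1}\cdots b_s^{e+1}\, b_{s+1}^{e}\cdots b_k^{e}\, b_{k+1}\cdots b_{k+r-s}],
\]
where the hypotheses $r \geq k$ and $n \geq r+k$ guarantee that each orbit sum on the right is legitimate.

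For $1 \leq s \leq k-1$, the $s$-th summand is an orbit sum of a monomial of shape $a_I b_I^{d_I} b_K^{e_K}$ (no free $a$'s) whose bound $b$-exponents take both values $e$ and $e+1$; so Lemma~\ref{le:in1}(2) puts it in $I^2$. Reducing mod $I^2$ in characteristic two kills the middle terms, and the $s=0$ and $s=k$ summands must therefore be equivalent. This is the first equivalence.

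For the second and third statements, I will iterate the first equivalence in reverse. Reading it as
\[
[a_1\cdots a_k\, b_1^{d}\cdots b_k^{d}\, b_{k+1}\cdots b_{k+r}] \equiv [a_1\cdots a_k\, b_1^{d-1}\cdots b_k^{d-1}\, b_{k+1}\cdots b_{k+r+k}]
\]
whenever $d \geq 1$ and $n \geq r+2k$, and starting from $r=0$, $d=e$, I apply this relation $e-1$ times; each step drops the bound $b$-exponent by one and adds $k$ free $b$-variables, producing $[a_1\cdots a_k\, b_1\cdots b_{ke}]$. The strictest constraint encountered is $n \geq ek$, which is the second claim. One final application (with $d=1$) eliminates the bound $b$-variables entirely, producing $[a_1\cdots a_k\, b_{k+1}\cdots b_{k+ke}]$ under the constraint $n \geq k(e+1)$, which is the third claim. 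The only real obstacle is the bookkeeping in the expansion of $P$ — verifying the multiplicity-one statement and the correct index ranges — after which Lemma~\ref{le:in1}(2) does the heavy lifting and the iterations are mechanical.
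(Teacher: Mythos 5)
Your proof of the first equivalence follows the paper's argument exactly: expand the product $[a_1\cdots a_k b_1^e\cdots b_k^e]\cdot[b_1\cdots b_r]$, observe that the two extremal orbit sums each occur with multiplicity one, and discard the intermediate orbit sums as decomposables. Your explicit indexing by pairs $(S,T)$ and the verification that a monomial determines its pair uniquely is a welcome sharpening of the paper's bare ``appears exactly once'' assertion. The second statement is obtained in both treatments by iterating the first. Where you genuinely diverge is the third statement: the paper proves it by a separate expansion of $[a_1\cdots a_k]\cdot[b_1\cdots b_{ke}]$, using Lemma~\ref{le:in1}(1) to discard the terms having both free and bound $a$'s, whereas you obtain it as one further application of the first equivalence, namely the instance with exponent $0$ and $ke$ free $b$'s. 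Your route is more economical --- one expansion serves all three claims --- and the hypotheses $n\geq ke$ and $n\geq k+ke$ come out of your bookkeeping correctly.

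One small correction: in the final step you need for the third statement (and, more generally, whenever the bound exponent being lowered reaches $e=0$), the intermediate terms $[a_1\cdots a_k\, b_1\cdots b_s\, b_{k+1}\cdots b_{k+r-s}]$ with $1\leq s\leq k-1$ have some $a$'s bound and some free, so they are decomposable by Lemma~\ref{le:in1}(1), not by part (2), which as stated requires $J=\emptyset$. The terms are decomposable either way, so nothing breaks, but the citation should be adjusted.
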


\begin{proof}
For the first statement consider the product
\[ [a_1\ldots a_kb_1^e\ldots b_k^e]\cdot [b_{1}\ldots b_r].
\]
The product contains $[a_1\ldots a_k b_1^e\ldots b_k^e b_{k+1}\ldots
b_{k+r}]$ and $[a_1\ldots a_k b_1^{e+1}\ldots b_k^{e+1} b_{k+1}\ldots
b_r]$, as well as other terms that look like $[a_1\ldots a_k
b_1^{d_1}\ldots b_k^{d_k}b_{k+1}\ldots b_{k+i}]$ in which the $d_i$'s
are not all equal.  But such terms are all decomposable by
Lemma~\ref{le:in1}(2).  

The second statement follows from the first using an induction:
\begin{align*}
 [a_1\ldots a_kb_1\ldots b_{ke}] & \equiv 
[a_1\ldots a_kb_1^2\ldots b_k^2
b_{k+1}\ldots b_{ke-k}] \\
& \equiv [a_1\ldots a_kb_1^3\ldots b_k^3
b_{k+1}\ldots b_{ke-2k}] \\
& \equiv \ldots \\
&\equiv [a_1\ldots a_kb_1^e\ldots b_k^e].
\end{align*}

Finally, for the third statement we consider the product
\[ [a_1\ldots a_k]\cdot [b_1\cdots b_{ke}].\]
This is a sum of terms $[m_i]$ where $[a_1\ldots a_kb_1\ldots b_{ke}]$
appears exactly once, $[a_1\ldots a_kb_{k+1}\ldots b_{k+ke}]$ appears
exactly once, and all other $m_i$'s have at least one free $a$ and one
bound $a$.  But Lemma~\ref{le:in1}(1) then tells us that these other
$m_i$'s are all decomposable.   
\end{proof}

\begin{cor}
\label{co:Ln-gen2}
If $e> \frac{n}{k}-1$ then $[a_1\ldots a_kb_1^e\ldots b_k^e]$ is
decomposable in $L_n$.  
\end{cor}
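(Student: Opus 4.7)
My plan is to show that $[a_1\cdots a_k b_1^e\cdots b_k^e]$ is decomposable for every $e\geq q$, where $q:=\lfloor n/k\rfloor$; since $e>\tfrac{n}{k}-1$ is equivalent to $e\geq q$ for integer $e$, this will suffice. Write $n=qk+r$ with $0\leq r<k$. For each $a\geq 0$ I will produce a single product relation that forces $[a_1\cdots a_k b_1^{a+q}\cdots b_k^{a+q}]$ into $I^2$.

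The product to use is
\[
P_a \;:=\; [a_1\cdots a_k b_1^a\cdots b_k^a]\cdot \sigma_{qk}(b),
\]
which lies automatically in $I^2$ since both factors lie in $I$. I will expand $P_a$ as a sum over pairs $(\{i\},\{j\})$ with $|\{i\}|=k$ and $|\{j\}|=qk$, and classify terms by $s:=|\{i\}\cap\{j\}|$. On each summand, the indices in $\{i\}\cap\{j\}$ become bound with $b$-power $a+1$, while those in $\{i\}\setminus\{j\}$ remain bound with $b$-power $a$. The case $s=0$ would force $k+qk\leq n$, i.e.\ $k\leq r$, contradicting $r<k$; this case is empty. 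For $0<s<k$ the bound $b$-powers take both values $a$ and $a+1$, so Lemma~\ref{le:in1}(2) makes these terms decomposable. Only the case $s=k$ (i.e.\ $\{i\}\subseteq\{j\}$) survives modulo $I^2$, contributing $[a_1\cdots a_k b_1^{a+1}\cdots b_k^{a+1} b_{k+1}\cdots b_{qk}]$ with coefficient $1\bmod 2$.

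I will then apply the first statement of Lemma~\ref{le:in3} iteratively $q-1$ times to absorb the $(q-1)k$ free $b$'s, each application stripping $k$ free $b$'s and raising the bound $b$-power by one. At the $\ell$-th step the free-$b$ count is $(q-\ell)k\geq k$ and the room hypothesis $n\geq(q-\ell+1)k$ reduces to $n\geq qk$, both of which hold by definition of $q$. The iteration terminates at $[a_1\cdots a_k b_1^{a+q}\cdots b_k^{a+q}]$, which is therefore in $I^2$; setting $e=a+q$ finishes the argument.

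The conceptual crux is the choice of multiplier $\sigma_{qk}(b)$: it is the largest elementary symmetric polynomial in the $b$'s whose complementary size $r=n-qk$ is \emph{strictly} less than $k$, and this strict inequality is exactly what vacates the $s=0$ case so that the reduction leaves a single residual invariant instead of two (this is where the argument differs decisively from the Case~A-style identity used in the third statement of Lemma~\ref{le:in3}, where both $s=0$ and $s=k$ contribute). The only technical point that could demand care is verifying that the coefficient in the $s=k$ case is $1\bmod 2$; this reduces via a standard multinomial identity to the fact that $\binom{n}{k}\binom{n-k}{(q-1)k}$ equals the orbit size of the target invariant, which is routine.
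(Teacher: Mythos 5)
Your proof is correct, but it takes a genuinely different route from the paper's. The paper's argument leaves $L_n$ entirely: it sets $N=ke+k>n$, invokes the \emph{third} statement of Lemma~\ref{le:in3} in $L_N$ to replace $[a_1\ldots a_kb_1^e\ldots b_k^e]$ modulo decomposables by the disjoint-support invariant $[a_1\ldots a_kb_{k+1}\ldots b_{k+ke}]$, and then pushes forward along the stabilization map $L_N\ra L_n$, where that invariant vanishes outright because each of its monomials needs $ke+k>n$ distinct indices. You instead work internally in $L_n$, manufacturing a new element of $I^2$, namely $[a_1\ldots a_kb_1^a\ldots b_k^a]\cdot\sigma_{qk}(b)$ with $q=\lfloor n/k\rfloor$, and showing that after discarding the cross terms (decomposable by Lemma~\ref{le:in1}) only the $s=k$ orbit survives, with coefficient $1$ since the pair $(\{i\},\{j\})$ is recoverable from the product monomial; you then climb back up with iterated applications of the \emph{first} statement of Lemma~\ref{le:in3}. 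Your choice of $\sigma_{qk}(b)$ as the largest elementary symmetric function whose complement has size $<k$ is exactly the right device to vacate the $s=0$ case, and the hypothesis checks for the iteration are all in order. What the paper's route buys is brevity and the avoidance of all overlap combinatorics (disjoint supports are free of charge when $N$ is large); what yours buys is independence from the stabilization maps and a more explicit exhibition of the decomposition inside $L_n$ itself. One small bookkeeping point: when $a=0$ the indices in $\{i\}\setminus\{j\}$ carry \emph{free} $a$'s rather than bound pairs with $b$-power $0$, so for $0<s<k$ you must cite Lemma~\ref{le:in1}(1) rather than Lemma~\ref{le:in1}(2) in that base case; the conclusion is unaffected.
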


\begin{proof}
Let $N=ke+k$, which is larger than $n$ by assumption.  We begin by
considering the element $[a_1\ldots a_kb_1^e\ldots b_k^e]$ in $L_N$.  
Lemma~\ref{le:in3} gives that
\[ [a_1\ldots a_kb_1^e\ldots b_k^e]\equiv [a_1\ldots a_kb_{k+1}\ldots
b_{k+ke}].
\]
Now apply the homomorphism $L_N \ra L_n$, and note that since $ke+k>n$
the element on the right maps to zero (every monomial term has at least one
index that is larger than $n$).  This proves that $[a_1\ldots
a_kb_1^e\ldots b_k^e]$ is decomposable in $L_n$.
\end{proof}

At this point we have verified that $L_n$ is generated, as an algebra,
by the classes $\sigma_i(b)$ for $1\leq i\leq n$ together with the
classes $[a_1\ldots a_{2^i}b_1^e\ldots b_{2^i}^e]$ for $1\leq 2^i\leq
n$ and $0\leq e \leq \frac{n}{2^i}-1$.  It remains to verify that
these classes are a minimal set of algebra generators---or
equivalently, that they give a $\Z/2$-basis for $I/I^2$.
The approach will be to first grade the algebras in a convenient way.
Then we identify the indecomposables in
$L_\infty$, which can be done by a counting argument.  Finally, we observe
that $L_\infty \ra L_n$ is an isomorphism in degrees less than or
equal to $n$, and use this to deduce the desired facts about the
indecomposables in $L_n$.  

Grade the algebra
$K_n=\Lambda(a_1,\ldots,a_n)\tens \F_2[b_1,\ldots,b_n]$ by having the
degree of each $a_i$ be $1$ and the degree of each $b_i$ be $2$.  Then
$L_n$ inherits a corresponding grading.  The invariant element
$\sigma_i(b)$  has degree $2i$, whereas the element
$\alpha_{i,e}=[a_1\ldots a_{2^i}b_1^e\ldots b_{2^i}^e]$ has degree
$2^i+2e\cdot 2^i=2^i(2e+1)$.  Notice that for every positive integer
$r$ the set $\{\alpha_{i,e}\,|\,
0\leq i,0\leq e\}$ has exactly one element of degree $r$.

\begin{prop}
\label{pr:stable-iso}
The map $\Lambda(\alpha_{i,e}\,|\,0\leq i, 0\leq e) \tens
\F_2[\sigma_i\,|\, i\geq 0] \ra L_\infty$ is an isomorphism.
\end{prop}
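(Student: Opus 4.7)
The plan is to establish well-definedness and surjectivity as immediate corollaries of work already in place, then prove injectivity by matching Poincar\'e series with respect to the grading in which $\deg a_i = 1$ and $\deg b_i = 2$. Well-definedness is immediate: since $a_j^2 = 0$ and squaring commutes with orbit sums in characteristic two, $\alpha_{i,e}^2 = 0$ in $L_n$ for every $n$. For surjectivity, Corollary~\ref{co:Ln-gen} combined with Corollary~\ref{co:Ln-gen2} implies that the classes $\sigma_i(b)$ and $\alpha_{i,e}$ (over the appropriate ranges) generate $L_n$. The structure map $L_{n+1}\twoheadrightarrow L_n$ kills only those orbit sums whose representative monomials occupy every one of the $n+1$ slots nontrivially---such monomials necessarily have degree $\geq n+1$---so $L_{n+1}\to L_n$ is an isomorphism in degrees $\leq n$, and consequently $L_\infty^{(d)} = L_n^{(d)}$ for $n\geq d$. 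Lifting generators back through these isomorphisms in each degree shows that the $\sigma_i(b)$ and $\alpha_{i,e}$ generate $L_\infty$.

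For the Poincar\'e series of the domain $D = \Lambda(\alpha_{i,e})\tens \F_2[\sigma_i]$, the key observation is that $\deg \alpha_{i,e} = 2^i(2e+1)$, and the map $(i,e)\mapsto 2^i(2e+1)$ is a bijection $\Z_{\geq 0}^2 \to \Z_{\geq 1}$ by the $2$-adic decomposition of positive integers. The exterior factor therefore contributes $\prod_{n\geq 1}(1+t^n)$, and combining with the polynomial factor $\prod_{i\geq 1}(1-t^{2i})^{-1}$, Euler's identity yields
\[ P(D,t) \;=\; \prod_{n\geq 1}(1+t^n)\cdot \prod_{i\geq 1}\frac{1}{1-t^{2i}} \;=\; \prod_{n\geq 1}\frac{1}{1-t^n}. \]

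For the Poincar\'e series of $L_\infty$, I parameterize the orbit-sum basis of $L_n$ (for $n$ large compared to the fixed degree $d$) by \emph{multiplicity data}: a $\Sigma_n$-orbit of a monomial $m\in K_n$ is determined by the multiset assignment, to each index $i\in\{1,\ldots,n\}$, of a pair $(\epsilon,k)\in\{0,1\}\times\Z_{\geq 0}$ recording whether $a_i$ divides $m$ and the exponent of $b_i$. A slot with data $(\epsilon,k)$ contributes degree $\epsilon+2k$. For $n\geq d$ the count $c_{(0,0)}$ of empty slots is freely determined by the other multiplicities, so basis elements of $L_\infty^{(d)}$ are in bijection with functions $c\colon(\{0,1\}\times\Z_{\geq 0})\setminus\{(0,0)\}\to\Z_{\geq 0}$ satisfying $\sum(\epsilon+2k)\,c(\epsilon,k)=d$, whence
\[ P(L_\infty,t) \;=\; \prod_{k\geq 1}\frac{1}{1-t^{2k}}\cdot \prod_{k\geq 0}\frac{1}{1-t^{2k+1}} \;=\; \prod_{n\geq 1}\frac{1}{1-t^n}. \]
This matches $P(D,t)$ and, with surjectivity, forces the map to be an isomorphism. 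The main technical obstacle is the stability statement $L_\infty^{(d)}=L_n^{(d)}$ for $n\geq d$; once that is justified via the orbit-sum basis as above, the rest reduces to routine generating-function manipulations.
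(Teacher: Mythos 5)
Your proposal is correct and follows essentially the same route as the paper: surjectivity from the generation results (Corollaries~\ref{co:Ln-gen} and \ref{co:Ln-gen2} plus stability of $L_{n+1}\fib L_n$ in low degrees), and injectivity by showing both sides have the partition generating function $\prod_{n\geq 1}(1-t^n)^{-1}$ as Poincar\'e series, using the bijection $(i,e)\mapsto 2^i(2e+1)$ on one side and the decomposition of monomials into pure factors (your ``multiplicity data'') on the other. The only difference is cosmetic: the paper routes both counts through explicit bijections with the monomial basis of $\F_2[v_1,v_2,\ldots]$, whereas you compute the two infinite products directly and invoke Euler's identity.
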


\begin{proof}
We have already proven in Corollary~\ref{co:Ln-gen} that the map is a
surjection.  The injectivity will be deduced from a counting argument.
For convenience, let $D$ denote the domain of the map from the
statement of the proposition.
Let $S=\F_2[v_1,v_2,\ldots]$ where $v_i$ has degree $i$.  We will
prove that the Poincar\'e series for $D$ and $L_\infty$ both coincide
with the Poincar\'e series for $S$.  Since $D$ and $L_\infty$ will
therefore have identical Poincar\'e series, the surjection $D\fib
L_\infty$ must in fact be an isomorphism.

Note that $S$ has a basis over $\F_2$ consisting of monomials
\[ v_{i_1}v_{i_2}\cdots v_{i_r} v_1^{2e_1}v_2^{2e_2}\cdots
v_k^{2e_k}\] with each $e_j\geq 0$, where the $i_u$'s are distinct.
There is an evident bijection between the elements of this basis and
the basis for $D$ consisting of monomials in the $\alpha_{i,e}$'s and
$\sigma_i$'s: we replace each $v_{i_r}$ with the unique $\alpha_{i,e}$
having degree $i_r$, and we replace each $v_i^{2e}$ with $\sigma_i^e$
This
identifies the Poincar\'e series for $S$ and $D$.

Recall that $L_\infty$ has a $\Z/2$-basis consisting of the invariants
$[a_{i_1}\ldots a_{i_r}b_{j_1}^{e_1}\ldots b_{j_s}^{e_s}]$ where there
is allowed to be overlap between the $i$- and $j$-indices.    Say that
a monomial is \dfn{pure} if it only contains $a$'s and $b$'s of a
single index.  So $b_i^{e}$ and $a_ib_i^e$ are pure, but $a_1a_2b_1^2$
is not.  
An arbitrary 
monomial $m$ can be written
uniquely (up to permutation of the factors) as 
\[ m=m_1\cdot m_2 \cdots m_t \]
where each $m_i$ is pure and the indices appearing in $m_i$ and
$m_j$ are different for every $i\neq j$.  For example,
\begin{myequation}
\label{eq:monomial}
 a_1a_2a_3a_4 b_1^4b_2b_4b_5^2 = (a_1b_1^4)\cdot (a_2b_2) \cdot
(a_3)\cdot (a_4b_4)
\cdot (b_5^2).
\end{myequation}
For a pure monomial $m$, let $d(m)$ be its degree and let
$\eta(m)=v_{d(m)}$.  Finally, for an arbitrary monomial $m$ as above
define
$\eta(m)=\eta(m_1)\cdots\eta(m_t)=v_{d(1)}\cdot v_{d(2)} \cdots v_{d(t)}$.  
For example, for the monomial in (\ref{eq:monomial}) we have
$\eta(m)=v_1 v_3^2  v_9  v_{10}$.  

Note that if $\sigma$ is a permutation
of the indices then $\eta(\sigma m)=\eta(m)$.  One readily checks that the
function $\eta$ gives a bijection between our basis for $L_\infty$ and
the standard monomial basis for $S$; it should be enough to see the
inverse in one example, e.g.
\[ v_1^3v_2^2v_3^2v_6v_{10} = \eta([a_1a_2a_3\cdot b_4b_5\cdot
a_6b_6a_7b_7\cdot b_8^3\cdot b_9^5]).
\]
Clearly $\eta$ preserves the
homogeneous degrees of the elements, so the Poincar\'e series for
$L_\infty$ and $S$ coincide.  This completes our proof.
\end{proof}

\begin{lemma}
\label{le:iso}
The surjections $L_{n+1}\fib L_n$ and $L_\infty \fib L_n$ are isomorphisms in
degrees less than or equal to $n$.
\end{lemma}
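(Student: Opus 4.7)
The plan is to produce an explicit monomial basis for each $L_n$ and then show that the maps $L_{n+1}\to L_n$ and $L_\infty \to L_n$ are bijections between bases in low degrees. A basis for $L_n$ is given by orbit sums $[m]$, where $m$ ranges over $\Sigma_n$-orbit representatives of monomials in $K_n$. I would encode such an orbit by the multiset $S(m)$ of pairs $(\epsilon_i, d_i)$ with $(\epsilon_i, d_i)\neq (0,0)$, recorded for each index $i$ actually appearing in $m$. Conversely, any finite multiset of such pairs of size $\leq n$ arises from a unique $\Sigma_n$-orbit, since one can freely assign which indices in $\{1,\ldots,n\}$ carry which data.

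Next I would track what the map $\pi\colon L_{n+1}\to L_n$ (which sends $a_{n+1}, b_{n+1}$ to $0$) does on this basis. Given $[m]_{n+1}$ with shape $S$ of size $k\leq n+1$, pick the representative $m$ using indices $\{1,\ldots,k\}$. The $\Sigma_{n+1}$-orbit of $m$ splits into two parts: translates whose indices lie in $\{1,\ldots,n\}$, and translates involving the index $n+1$. The map $\pi$ annihilates the latter, and the former sum is precisely the $\Sigma_n$-orbit sum $[m]_n$---any $\Sigma_{n+1}$-translate of $m$ avoiding $n+1$ is already a $\Sigma_n$-translate of $m$. Thus $\pi([m]_{n+1})=[m]_n$ when $k\leq n$, and $\pi([m]_{n+1})=0$ when $k=n+1$. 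An analogous statement holds for $L_\infty\to L_n$, with the shape $S$ allowed to have any finite size; the basis element dies exactly when $|S|>n$.

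The final step is the grading bound. Each pair $(\epsilon,d)\in S$ with $(\epsilon,d)\neq (0,0)$ contributes at least $1$ to the total degree $\sum_i (\epsilon_i+2d_i)$, so any basis element in degree $\leq n$ has $|S|\leq n$. Combining this with the previous paragraph, $\pi$ restricts to a bijection between the basis elements of $L_{n+1}$ (respectively $L_\infty$) in degrees $\leq n$ and the basis elements of $L_n$ in degrees $\leq n$; hence $\pi$ is an isomorphism in these degrees. The only real obstacle is the careful bookkeeping of $\Sigma_{n+1}$- versus $\Sigma_n$-orbits, which reduces to the transparent observation that composing with any transposition swapping $n+1$ with an index not appearing in $m$ fixes $m$.
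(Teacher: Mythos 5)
Your proof is correct and is essentially the paper's argument: the paper simply says the lemma "is clear from our description of the additive basis for $L_n$," and your orbit-sum bookkeeping (basis elements survive or die according to whether their shape needs more than $n$ indices, which is impossible in degree $\leq n$) is exactly the detail being left implicit there.
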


\begin{proof}
This is clear from our description of the additive basis for $L_n$.
\end{proof}

\begin{proof}[Proof of Theorem~\ref{th:En}]
We have already proven (c) in Proposition~\ref{pr:stable-iso}, so it only
remains to prove (a) and (b).  For (a) we have proven in
Corollaries~\ref{co:Ln-gen} and \ref{co:Ln-gen2} that $L_n$ is
generated by the given classes, so we need only show that those
classes are independent modulo $I^2$.  However, all of the classes in
question are in degrees less than $n$.  If there were a relation among
them in $L_n$, this relation would lift to $L_\infty$ by
Lemma~\ref{le:iso}.  Yet in $L_\infty$ the classes are obviously
independent modulo $I^2$.

Finally, we prove (b).  In our list of indecomposables there are $n$
of the form $\sigma_i(b)$ ($1\leq i\leq n$).   The ones of the form
$[a_1\ldots a_{2^i}]$ number $\lfloor \log_2(n)\rfloor$ since we must
have $2^i\leq n$.  The ones of the form $[a_1b_1^e]$ number $\lfloor
n-1\rfloor$,   the ones of the form $[a_1a_2b_1^eb_2^e]$ number $\lfloor
\frac{n}{2}-1\rfloor$, etc.
So we have the formula
\[ \#(\text{indecomposables in $L_n$}) =
n + \lfloor \log_2(n)\rfloor + (n-1) + \lfloor \tfrac{n}{2}-1\rfloor + \lfloor
\tfrac{n}{4}-1\rfloor + \cdots
\]
where the series stops when $\frac{n}{2^i}$ becomes smaller than $1$.
Thus, excluding the first two terms we have $\lfloor \log_2(n)\rfloor$
terms, all of which have a ``-1'' in them.  These negative ones
together cancel the $\lfloor \log_2(n)\rfloor$ term, leaving
\[  \#(\text{indecomposables in $L_n$}) =
2n  + 
\lfloor \tfrac{n}{2}\rfloor + \lfloor
\tfrac{n}{4}\rfloor + \cdots
\]

Let $\alpha(n)=\lfloor \tfrac{n}{2}\rfloor + \lfloor
\tfrac{n}{4}\rfloor + \cdots$.  We complete the proof of (b) by
showing that 
\[ \alpha(n)=n-(\text{number of ones in the binary expansion of $n$}).
\] 
We do this by induction on $n$, the case $n=1$ being trivial.  For the
general case write $n=2^k+n'$ where $n'<2^k$.  Then 
\begin{align*}
 \alpha(n)=(2^{k-1}+2^{k-2}+\cdots + 1) + \alpha(n') = 2^k-1 +
\alpha(n') &= n-n'-1 + \alpha(n') \\
&= n-(n'-\alpha(n')+1).
\end{align*}
By induction, $n'-\alpha(n')$ is the number of ones in the binary
expansion of $n'$---which is also one less than the number in the binary
expansion of $n$.  This completes the proof.
\end{proof}

\bibliographystyle{amsalpha}

\end{document}